\newtheorem{theorem}{Theorem}[section]
\newtheorem{lemma}[theorem]{Lemma}
\newtheorem{proposition}[theorem]{Proposition}
\theoremstyle{definition}
\newtheorem{definition}[theorem]{Definition}
\newtheorem{assum}[theorem]{Assumption}
\theoremstyle{remark}
\newtheorem{remark}[theorem]{Remark}
\newcommand{\fe}{\mathrm{e}}
\newcommand{\eps}{\varepsilon}
\newcommand{\bT}{{\mathbb T}}
\newcommand{\abs}[1]{\left\vert#1\right\vert}
\newcommand{\norm}[1]{\left\Vert#1\right\Vert}
\def\hh{h}
\numberwithin{equation}{section}
\begin{document}

\title[Two-scale integrators for Klein-Gordon equation]{Two-scale integrators with high accuracy and long-time conservations for the nonlinear Klein-Gordon equation in the nonrelativistic limit regime}

\author[B. Wang]{Bin Wang}\address{\hspace*{-12pt}B.~Wang: School of Mathematics and Statistics, Xi'an Jiaotong University, 710049 Xi'an, China}
\email{wangbinmaths@xjtu.edu.cn}

\author[Z. Miao]{Zhen Miao}
\address{\hspace*{-12pt}  Z. Miao: School of Mathematics and Statistics,
          Northwestern Polytechnical University,
       710072  Xi'an,  China}
\email{mz91127@126.com}

\author[Y. L. Jiang]{Yaolin Jiang}
\address{\hspace*{-12pt} Y. L.~Jiang: School of Mathematics and Statistics, Xi'an Jiaotong University, 710049 Xi'an, China}
\email{yljiang@mail.xjtu.edu.cn}
\date{}

\dedicatory{}

\begin{abstract}
In this paper, we are concerned with two-scale integrators  for the non-relativistic Klein--Gordon (NRKG) equation with a dimensionless parameter $0<\varepsilon\ll 1$, which is inversely proportional to the speed of light. The highly oscillatory property in time of this model corresponds to the parameter $\varepsilon$ and the equation {in the form of $\partial_{tt}u -\frac{\Delta}{\eps^2}  u  +\frac{1}{\eps^4}u +\frac{\lambda}{\varepsilon^2} f(u)=0$} {has a factor $1/\varepsilon^2$ in front of the  nonlinearity which means that this part becomes strong when $\varepsilon$ is small. These} two aspects bring significantly numerical burdens in designing numerical methods. {We propose  a  class of two-scale integrators which is constructed based on some reformulations to the system, Fourier pseudo-spectral method and exponential integrators.} Two practical integrators up to order three and four are constructed by using some symmetric conditions and the stiff order conditions of implicit exponential integrators. The convergence of the obtained integrators is rigorously studied, and it is shown that the uniform accuracy in time is $\mathcal{O}(h^3)$ and $\mathcal{O}(h^4)$ for the time stepsize $h$. The near energy conservation over long times is also established for the multi-stage integrators by using modulated Fourier expansions. 
Numerical results on a NRKG equation show that the proposed integrators  have high accuracy, excellent long time energy conservation and competitive efficiency.\\
{\bf Keywords:} Two-scale integrators, High accuracy, Long time near conservation, Nonlinear Klein-Gordon equation, Modulated Fourier expansion. \\
{\bf AMS Subject Classification:} 65M12, 65M15, 65M70.
\end{abstract}

\maketitle

\section{Introduction}
{A significant portion of dynamical systems can be characterized by the non-relativistic Klein-Gordon (NRKG) equation, which is expressed as follows}  (\cite{Bao21,zhao19,PI1,Chartier15,zhao18})
\begin{equation}\label{klein-gordon}
\begin{array}
[c]{l}%
 \eps^2 \partial_{tt}u(x,t) -\Delta u(x,t) +\frac{1}{\eps^2}u(x,t)+\lambda f\big(u(x,t)\big)=0,\
\ \  x\in \Omega_x^d,\ \ t\in[0,T],\\
u(x,0)=\psi_1(x),\ \
 u_{t}(x,0)=\frac{1}{\eps^2}\psi_2(x),
\end{array}
\end{equation}
where   $t$ is time, $x\in \Omega_x^d$ is the spatial coordinate with  $d$-dimensional ($d\geq1$) space torus $\Omega_x^d$, $\Delta$ is the Laplacian operator, $u(x,t)$ is a complex-valued scalar field, $0<\eps \ll 1$
is a dimensionless parameter inversely proportional to the speed of light, $\lambda \in \mathbb{R}$ is a given dimensionless
parameter (positive and negative for defocusing and focusing self-interaction, respectively), $ f(u)$ is a nonlinear {function  $f(u)=\nabla H_1(u)$ with a smooth
potential $H_1$, and} $\psi_1$ and $\psi_2$
are given complex-valued $\eps$-independent {functions}.
 The NRKG \eqref{klein-gordon} is time symmetric and conserves the energy
\begin{equation}
H(u,v)=\int_{\Omega_x^d}\Big(\eps^2\abs{v}^2+
\abs{\nabla u}^2+\frac{1}{\eps^2}\abs{u}^2+\lambda H_1(u)\Big)dx\equiv H(\psi_1, \psi_2/\eps^2),
 \label{hhh}
\end{equation}
with $v:=\partial_{t}u$. It is noted that the scales of $\psi_1,
\psi_2$ {lead to}  different properties of the
system.  In this paper,  {we consider the  scale $\psi_1=\mathcal{O}(1),  \psi_2=\mathcal{O}(1)$ and this gives an
energy unbounded system \eqref{klein-gordon} with large initial data.
Although such system  can be converted into an energy-bounded system through a scaling of the variables, the case with large initial data exhibits more fundamental distinctions, such as more pronounced oscillatory characteristics. This nuance renders the development and analysis of numerical methods for energy unbounded system   more difficult and challenging (\cite{Zhao}).}
 We should note that all the methods and analysis presented in this paper are applicable to energy bounded system \eqref{klein-gordon} where a small $\psi_2$ is chosen as $\psi_2=\mathcal{O}(\eps^2)$.  {This study employs periodic boundary conditions for clarity of presentation. However, the extension to other boundary types is trivial, as the theoretical framework is centered on the time integrator.}

 {The model \eqref{klein-gordon}} often
arises in a variety of fields such as plasma physics  for modeling interaction between Langmuir and ion sound waves  and   cosmology as a phonological model for dark-matter and/or black-hole evaporation.
Its computation represents
  major challenges because of two aspects. a) The solution of
\eqref{klein-gordon} becomes highly oscillatory in time when large
frequencies are involved in the equation, which corresponds to the
parameter $0 < \eps\ll  1$. b) {The nonlinear term $f(u)$ is preceded by a factor of $1/\varepsilon^2$ (divide both sides of the equation \eqref{klein-gordon} by $\eps^2$),
  indicating that this component} is very strong when $0 < \eps\ll  1$. {This aspect} also brings  great challenges for effective scientific computing.

The time integration of such equation \eqref{klein-gordon} is a basic
algorithmic task and it has been received much attention in recent decades.  Due to the high oscillations and {strong part $f(u)/\eps^2$,}
 the  traditional methods such as symplectic/symmetric Runge--Kutta--Nystr\"{o}m (RKN) methods (\cite{Feng-book,Sanz-Serna}) or energy-preserving Runge-Kutta methods (\cite{r4,Shen19}) often {result in} convergence problems.
In order to get more competitive methods for second-order systems, Gautschi-type trigonometric
integrators were developed in
\cite{Hochbruck1999} and different kinds of trigonometric
integrators were formulated and  studied in
\cite{franco2006,MD,Grimm,hairer2000,WIW,Wu}. However, for these
integrators, they at most have  uniform second order accuracy in the
absolute position error w.r.t. $\eps $ and only have uniform first
order accuracy in velocity error \cite{Grimm} for not strong   nonlinearity. {As a
result, it is} challenging to get trigonometric integrators with
{high-order   accuracy} for the nonlinear Klein-Gordon equation in the nonrelativistic limit regime.

Recently, some new methods with uniform accuracy (w.r.t $\eps$) for  highly  oscillatory systems have been proposed and analysed such as
  two-scale formulation   methods  \cite{Chartier15,vp3D}, uniformly accurate exponential-type integrators \cite{PI1,S2},  nested Picard iterative integrators \cite{PI2}, multiscale time integrators \cite{Baocai,bao14,2scale multi},   multi-revolution composition
methods \cite{zhao18}, uniformly accurate methods with averaging \cite{NUA,autoUA} and low regularity integrators \cite{S1}.  In \cite{zhao19,Schratz},  various uniformly accurate  (UA) methods  and asymptotic expansion techniques have been  compared systematically for solving the  NRKG equation \eqref{klein-gordon}.    Recently,  time-splitting methods were proved to have
 improved uniform error bounds for solving nonlinear Klein-Gordon equation with weak nonlinearity \cite{Bao21}.
Most of these uniformly accurate methods can be applied to  the system \eqref{klein-gordon} and they have   uniform accuracy in both position and velocity (\cite{zhao19}).  Unfortunately,
  most of them only have up to second uniform accuracy and do not have good long time conservation behaviour when applied to conservative systems. More precisely,
if a UA method is considered as the approximation of
\eqref{klein-gordon}, the numerical energy error will increase
as  time evolves. In a recent work \cite{NUA}, the UA method named
as pullback method is shown to hold the long time conservation  by
the numerical results but without rigorous analysis. In a more
recent work \cite{WZ21}, the authors succeed in making the two-scale
method with near conservation laws for first-order systems. However,
only one-stage type  methods of order two are presented there and the equation
\eqref{klein-gordon} does not share the form of the system
considered in \cite{WZ21}, which means that the analysis of
\cite{WZ21} is no longer applicable for the system
\eqref{klein-gordon} of this paper. Moreover, the second-order
differential  equation \eqref{klein-gordon} has its special
structure which will be neglected if we rewrite it as a general
highly oscillatory first-order differential equation.  Therefore, it is necessary and
meaningful to design and analyze UA methods {with uniform high order accuracy
and  good long time energy near} conservation for solving the
second-order system \eqref{klein-gordon}. It is worth pointing
out that  the integrators derived in this paper will be shown to
have long time {near conservation and the uniform} accuracy  $\mathcal{O}(h^3)$ or $\mathcal{O}(h^4)$ for the time {stepsize} $h$.
 {This high accuracy} is different from  the existing UA methods \cite{Baocai,bao14,zhao19,PI1,PI2,Chartier15,vp3D,2scale multi,NUA,autoUA,zhao18,WZ21} because   {previous researches have primarily focused on second-order UA algorithms. }

In this paper we are interested in using numerical integrators with
time {stepsizes}  that are much larger than the $\eps$
of the system to obtain high accuracy  and good long-time
energy conservation.  {This work addresses both high-order UA algorithms and their nice long-term behaviour, with the main challenge—compared to \cite{Chartier15}—being the need to achieve these two goals concurrently.} {To
this end}, we take advantage of two-scale
formulation approach, spectral semi-discretisation and
exponential integrators with more than one stage. However, this brings some challenges and
difficulties in the achievement and analysis of long time energy
conservation. a) The two-scale formulation approach results in a new
system which has a completely different  linear part and nonlinear
function in comparison  with the original problem
\eqref{klein-gordon}. {This alteration introduces additional challenges in establishing the proof of long time near energy conservation for the initial problem
\eqref{klein-gordon}.} 
 b) To
{achieve}  high order  accuracy and good
long time behaviour, two-stage and three-stage exponential integrators are chosen in
this paper. {Unfortunately, however,} for a method with
more than one stage applied to highly oscillatory systems, {it seems to us that long time
analysis has not been done.} As pointed out in
\cite{Cohen0}, there is the technical difficulty coming from the
identification of invariants in the corresponding modulation system.
Thus, it remains a challenge to study long time behaviour for a
method with more than one stage. c) The third challenge comes
{from} the diversity of \eqref{klein-gordon}
considered in this paper. Large initial data case
usually leads to large bounds of the coefficient functions in the
modulated Fourier expansion, which prevents the derivation of the
long time conservation.

To overcome these difficulties and make the analysis go smoothly,
a novel approach to the design of  integrators is established. The main  contributions of this paper are as follows.

a) We  consider some transformations of the original system and
use two-scale exponential integrators which  satisfy the derived stiff
order conditions. The transformations of the system and stiff order conditions
proposed in this paper can keep the high accuracy, and
the symmetric conditions used in the formulation of the methods yield  good long time near energy
conservation. It will be shown that these both  important properties can be hold for the integrators used with large time stepsizes, which is very  effective and efficient in scientific computing over long times.

  b) Compared with the existing UA methods, the accuracy of our   integrators is proved to be $\mathcal{O}(h^3)$ and $\mathcal{O}(h^4)$ for solving \eqref{klein-gordon}, where $h$ is the time stepsize. This  high accuracy is   very competitive  in the numerical computation of  NRKG  equation where $0<\eps\ll 1$.

 c) Moreover,   we should note that we managed to derive the long-time energy  conservation  for the two-stage and three-stage methods applied to the large initial value system, which is different form  the existing long term analysis work \cite{Cohen0,Cohen2,Lubich2006,Lubich2,hairer2000,lubich19,WW,WZ21},  where only one-stage type methods and small initial value are both necessary. The long time analysis presented in this paper provides an extension of the powerful  technique named as modulated Fourier expansion  \cite{Cohen0,Lubich2006,Lubich2,hairer2000,lubich19} to multi-stage methods and with such extension it is believed that more numerical methods with complicated scheme can be studied.


The rest of this paper is organized as follows. In Section \ref{sec:2}, we firstly present the formulation process of the integrators, and then we construct two practical integrators by using the proposed symmetry and stiff order conditions.  The main results concerning the error bound and near energy conservation are given in Section \ref{sec:3}, and some numerical resuts  are made  to show these two properties. The convergence is proved in Section \ref{sec:4}, and  the long-time analysis of energy conservation is drawn in Section \ref{sec:5}. The last section includes the conclusions of this paper.

\section{Formulation of the numerical integrators}\label{sec:2}

\subsection{The construction process of integrators}
In the formulation of the numerical scheme, we
first make some transformations of the system and then consider the numerical integration. There are in all three steps in the process and we present them one by one.

\textbf{Step 1. Some transformations of the system.} Firstly, we rewrite   the original system \eqref{klein-gordon}  as
 \begin{equation}\label{2ode}
\begin{array}[c]{ll}
 \partial_{tt}u(x,t) +\frac{1-\eps^2\Delta}{\eps^4}u(x,t) +\frac{1}{\eps^2} \lambda f\big(u(x,t)\big)=0.
\end{array}
\end{equation}
{By using a scaling to the variables $$
 \tilde{u}(x,t):=u(x,t)\ \  \textmd{and} \ \ \tilde{v}(x,t):=\eps^2  \partial_{t}u(x,t) ,$$
a new system is obtained immediately
\begin{equation}\label{H model problem}
\begin{aligned} &  \partial_{t} \tilde{u}(x,t)=\frac{1}{\eps^2}\tilde{v}(x,t), \qquad \qquad  \qquad \qquad \qquad\check{u}(x,0)= \psi_1(x),\\
 & \partial_{t} \tilde{v}(x,t)=-\frac{1-\eps^2\Delta}{\eps^2}\tilde{u}(x,t)-\lambda f(\check{u}(x,t)), \quad \check{v}(x,0)= \psi_2(x),\ \ t\in[0,T].
\end{aligned}
\end{equation}}Then letting
 \begin{equation*}
\begin{aligned} &\tilde{u}(x, t ):= \big(\sqrt{1-\eps^2\Delta}\big)^{-1}\Big[\cos\Big( \frac{ t \sqrt{1-\eps^2\Delta}}{\eps^2}\Big) q(x, t )+  \sin\Big( \frac{ t \sqrt{1-\eps^2\Delta}}{\eps^2}\Big)  p(x, t )\Big],\\
 &\tilde{v}(x, t ):=-  \sin\Big( \frac{ t \sqrt{1-\eps^2\Delta}}{\eps^2}\Big) q(x, t )+\cos\Big( \frac{ t \sqrt{1-\eps^2\Delta}}{\eps^2}\Big) p(x, t ), \end{aligned}
\end{equation*}
 we {obtain}
  \begin{equation}\label{new-system}%
\begin{aligned}
& \partial_{ t }q(x, t )=-   \sin\Big( \frac{ t \sqrt{1-\eps^2\Delta}}{\eps^2}\Big){g(t,q(x, t ),p(x, t ))},\qquad q(x,0)= \sqrt{1-\eps^2\Delta}\psi_1(x),\\
& \partial_{ t }p(x, t )=  \cos\Big(\frac{ t \sqrt{1-\eps^2\Delta}}{\eps^2}\Big){g(t,q(x, t ),p(x, t ))},\qquad \ \ \
p(x,0)= \psi_2(x),
 \end{aligned}
\end{equation}
where $${g(t,q,p)=-\lambda f\Big(\big(\sqrt{1-\eps^2\Delta}\big)^{-1}\Big[\cos\Big( \frac{ t \sqrt{1-\eps^2\Delta}}{\eps^2}\Big) q+  \sin\Big( \frac{ t \sqrt{1-\eps^2\Delta}}{\eps^2}\Big)  p\Big]\Big)}.$$

{For   the propagators $ \sin\Big( \frac{ t \sqrt{1-\eps^2\Delta}}{\eps^2}\Big)$  and $ \cos\Big( \frac{ t \sqrt{1-\eps^2\Delta}}{\eps^2}\Big)$,  we deal with them as
  \begin{equation}\label{scf}%
\begin{aligned}
& \sin\Big( \frac{ t \sqrt{1-\eps^2\Delta}}{\eps^2}\Big)=\sin\big( t/\eps^2+t D_{\eps}\big)
=\sin(t/\eps^2)\cos(t D_{\eps})+\cos(t/\eps^2)\sin(t D_{\eps}),\\
& \cos\Big( \frac{ t \sqrt{1-\eps^2\Delta}}{\eps^2}\Big)=\cos\big( t/\eps^2+t D_{\eps}\big)
=\cos(t/\eps^2)\cos(t D_{\eps})-\sin(t/\eps^2)\sin(t D_{\eps})
 \end{aligned}
\end{equation}
with an operator $D_{\eps}:=\frac{  \sqrt{1-\eps^2\Delta}-1}{\eps^2}: H^{\nu+2}\rightarrow H^{\nu}$ which is   uniformly  bounded  w.r.t. $\eps$. From \eqref{scf}, it is clear that
 the propagators $ \sin\Big( \frac{ t \sqrt{1-\eps^2\Delta}}{\eps^2}\Big)$  and $ \cos\Big( \frac{ t \sqrt{1-\eps^2\Delta}}{\eps^2}\Big)$  can be expressed by the  $2\pi$-periodic functions $\sin(t/\eps^2)$ and  $\cos(t/\eps^2)$ in $ t /\eps^2$.}

\textbf{Step 2. Two-scale formulation.} By isolating the fast time variable $ t /\eps^2$ as another variable $\tau$ and denoting
 $$U(x, t , t /\eps^2):=q(x, t ),\ V(x, t , t /\eps^2):=p(x, t ),$$
the two-scale {pattern of (\ref{new-system}) can be
formulated as follows:}
\begin{equation}\label{2scale}\begin{split}
  &\partial_{ t }U(x, t ,\tau)+\frac{1}{\eps^2}\partial_\tau U(x, t ,\tau)\\
  =&-    \Big(\sin(\tau)\cos(t D_{\eps})+\cos(\tau)\sin(t D_{\eps})\Big)F(t,\tau,U(x, t ,\tau),V(x, t ,\tau)),\\
    & \partial_{ t }V(x, t ,\tau)+\frac{1}{\eps^2}\partial_\tau V(x, t ,\tau)\\
     =&\Big(\cos(\tau)\cos(t D_{\eps})-\sin(\tau)\sin(t D_{\eps})\Big)F(t,\tau,U(x, t ,\tau),V(x, t ,\tau)),\end{split}
\end{equation}
where $ t \in[0,T],\ \tau\in\bT,$ $U(x, t ,\tau)$ and $V(x, t ,\tau)$ are the unknowns which are periodic in $\tau$ on the
torus $\mathbb{T}=\mathbb{R}/2\pi\mathbb{Z}$ and
\begin{equation*} \begin{split}{F(t,\tau,U,V)}=&-\lambda f\Big(\big(\sqrt{1-\eps^2\Delta}\big)^{-1}\Big[\Big(\cos(\tau)\cos(t D_{\eps})-\sin(\tau)\sin(t D_{\eps})\Big)U\\
&+\Big(\sin(\tau)\cos(t D_{\eps})+\cos(\tau)\sin(t D_{\eps})\Big) V\Big]\Big).\end{split}
\end{equation*}
Letting{
\begin{equation*} \begin{split}&G(t,\tau,X(x, t ,\tau))\\
 :=&\left(
                      \begin{array}{c}
                      -\Big(\sin(\tau)\cos(t D_{\eps})+\cos(\tau)\sin(t D_{\eps})\Big)F(t,\tau,U(x, t ,\tau),V(x, t ,\tau))\\
                       \Big(\cos(\tau)\cos(t D_{\eps})-\sin(\tau)\sin(t D_{\eps})\Big)F(t,\tau,U(x, t ,\tau),V(x, t ,\tau)) \\
                      \end{array}
                    \right)\end{split}
\end{equation*}}with $X:=[U;V],$
 the above system can be rewritten
in a compact form \begin{equation}\label{2scale compact} \partial_{ t } X(x, t ,\tau)+\frac{1}{\eps^2}
 \partial_\tau X(x, t ,\tau)=G(t,\tau,X(x, t ,\tau)),\ \   t \in[0,T],\ \tau\in\bT.\end{equation}

For this new system, the following assumption is required in this paper.
\begin{assum}\label{ass}
Consider that the unknowns of  \eqref{klein-gordon}   $t \rightarrow u$  and $t \rightarrow \partial_t u$ are   maps  onto the Sobolev spaces  $H^{\nu+1}(\Omega_x^d)$  and  $H^{\nu}(\Omega_x^d)$ with  {integers} $\nu\geq0$ and
the $d$-dimensional space torus $\Omega_x^d$ ($d\geq 1$), respectively. {It is also required that these maps have continuous derivatives (w.r.t. $t$ and $x$) up to fourth order.}
In addition,
 it is required that   $\nu\geq 4+d$ which is necessary for the analysis in Banach algebras. For all $\alpha,\beta,\gamma \in \{0,1,\ldots,4\}$  and {for the function $G(t,\tau,X),$ it is assumed that the functional  $\partial^{\alpha}_{\tau}\partial^{\beta}_{t}\partial^{\gamma}_{X}G(t,\tau,X)$ is continuous and locally
bounded w.r.t. $\eps$ from   $\bT\times H^{\nu}$  to $\mathcal{L}(\underbrace{H^{\sigma}\times \cdots\times H^{\sigma}}_{\beta\ \textmd{times}},H^{\sigma-\beta})$, where   $\nu\geq\sigma\geq\beta+d$.}
\end{assum}

\begin{remark}
It is noted that this new variable $\tau$ of \eqref{2scale compact} offers a free degree for designing the initial date $X(x,0,\tau)$.
This two-scale equation \eqref{2scale} with a modified initial data is analysed in  \cite{Chartier15,autoUA} to construct uniformly accurate methods.
We here use the strategy from
\cite{Chartier15,autoUA} to obtain the fourth-order  initial data for
(\ref{2scale}), which is presented briefly as follows.
\end{remark}
 {For the periodic function  $v(\cdot)$ on $\bT$, we introduce the  notations
 $$L:=\partial_\tau,\ \ \ \
 \Pi v:=\frac{1}{2\pi}\int_0^{2\pi}v(\tau)d\tau.$$
 On the set of functions with vanishing average, $\partial_{\tau}$
is invertible with inverse defined by (see \cite{Chartier15})
$L^{-1} v:=(I-\Pi) \int_0^{\tau }v(\theta)d\theta$. By further letting $A:=L^{-1}(I-\Pi)$, it is easy to get
}
   \begin{equation}\label{pro ope}
\begin{aligned}
& L \Pi=0,\ \ \Pi^2=\Pi,  \ \ \Pi L v=0,\ \ \Pi A=0,\\
&
LA=I-\Pi,\ \ LA^2=(I-\Pi) A=A-\Pi  A=A.\end{aligned}
\end{equation}
Based on the Chapman-Enskog expansion   (\cite{NUA,autoUA}), the solution $X(x, t ,\tau)$ {of \eqref{2scale compact}} can be formulated as
   $$X(x, t ,\tau)=\underline{X}(x, t )+\kappa(x, t ,\tau)\ \ \textmd{with}\ \ \underline{X}(x, t )=\Pi X(x, t ,\tau),\ \ \Pi\kappa(x, t ,\tau)=0.$$
 From the fact   \eqref{2scale compact},
  it follows that $$\partial_{ t } \underline{X}(x, t )=\Pi  G(t,\tau,\underline{X}(x, t )+\kappa(x, t ,\tau)).$$ Then the   correction $\kappa$ satisfies the differential equation
   $$\partial_{ t } \kappa(x, t ,\tau)+\frac{1}{\eps^2}
 \partial_\tau \kappa(x, t ,\tau)=(I-\Pi)G(t,\tau,\underline{X}(x, t )+\kappa(x, t ,\tau)),$$
 which can be reformulated as
  \begin{equation}\label{kapa de}
\kappa(x, t ,\tau)=\eps^2 A G(t,\tau,\underline{X}(x, t )+\kappa(x, t ,\tau))-\eps^2 L^{-1}(\partial_{ t }\kappa(x, t ,\tau)).
\end{equation}
 We seek an expansion in powers of $\eps$ for the composer $\kappa$:
 \begin{equation}\label{kapa}
 {\kappa(x, t ,\tau)=\eps^2 \kappa_1(\tau,\underline{X}(x, t ))+\eps^4 \kappa_2(\tau,\underline{X}(x, t ))+\eps^6 \kappa_3(\tau,\underline{X}(x, t ))+\mathcal{O}_{\mathcal{X}^{\nu}}(\eps^{8})},
\end{equation}
where $\mathcal{O}_{\mathcal{X}^{\nu}}$ denotes the term uniformly bounded with the $\mathcal{X}^{\nu}$-norm
and $\mathcal{X}^{\nu}:=\bigcap_{0\leq l\leq\nu-d}C^l(\mathbb{T};H^{\nu-l}).$
 Inserting \eqref{kapa} into \eqref{kapa de} and using the Taylor series of $G$ at $\underline{X}$ yield
 \[
\begin{aligned}
\eps^2 \kappa_1&+\eps^4 \kappa_2+\eps^6 \kappa_3+\mathcal{O}_{\mathcal{X}^{\nu}}(\eps^{8})=
\eps^2 A G(t,\tau,\underline{X})+\eps^2 A \partial_{X}G(t,\tau,\underline{X})\big(\eps^2 \kappa_1+\eps^4 \kappa_2\big)\\&+\frac{1}{2}\eps^2 A \partial^2_UG(t,\tau,\underline{X})\big(\eps^2 \kappa_1,\eps^2 \kappa_1\big)-\eps^2 L^{-1}(\eps^2 \partial_{ t }\kappa_1+\eps^4 \partial_{ t }\kappa_2)+\mathcal{O}_{\mathcal{X}^{\nu}}(\eps^{8}).\end{aligned}
\]
 By comparing the coefficients   of $\eps^j$ with $j=1,2,3$, one gets
  \begin{equation}\label{kapa re}\begin{aligned}
&
\kappa_1=AG(t,\tau,\underline{X}),\ \ \kappa_2=A\partial_{X}G(t,\tau,\underline{X})AG(t,\tau,\underline{X})-L^{-1}  \partial_{ t }\kappa_1,\\
 &\kappa_3=A\partial_{X}G(t,\tau,\underline{X})\kappa_2+\frac{1}{2}  A \partial^2_UG(t,\tau,\underline{X})\big(  \kappa_1, \kappa_1\big)-L^{-1}  \partial_{ t }\kappa_2.
\end{aligned}
\end{equation}

 With these preparations, consider the following initial condition
 \begin{equation}\label{inv}X_0(\tau):=\underline{X}^{[0]}+\eps^2 \kappa_1(\tau,\underline{X}^{[0]})+\eps^4 \kappa_2(\tau,\underline{X}^{[0]})+\eps^6 \kappa_3(\tau,\underline{X}^{[0]}),\end{equation}
 where $\underline{X}^{[0]}$ is given by $\underline{X}^{[0]}:=\Pi [q(x,0);p(x,0)]$. Observing that $\Pi,\ A$ are bounded on $C^0(\mathbb{T};H^{\sigma})$ for any $\sigma$, it is deduced that
$\kappa_1,\kappa_1, \kappa_3$ all belong to $\mathcal{X}^{\nu}$.
For the two-scale system \eqref{2scale} with  {the   initial data \eqref{inv}},  the following   estimate of the solution is obtained and its proof will be given in the next section.

\begin{proposition} \label{TS M} 
Under the conditions given in  Assumption \ref{ass}, it is clear that   $\underline{X}^{[0]}$ is   uniformly bounded in $\eps$ w.r.t.  the $H^{\nu}$ norm.
 Then,  the two-scale system \eqref{2scale compact} with the  initial condition \eqref{inv} has a unique solution
$X(\cdot, t ,\tau)\in C^0([0,T]\times\bT;H^{\nu})$ with the bound
$$\norm{X(\cdot, t ,\tau)}_{H^{\nu}}\leq
C_1 \norm{X_0(\tau)}_{H^{\nu}} \leq
C_2 \  \forall\       t  \in(0,T],$$
where the constants $C_1,C_2 > 0$ are independent of $\eps$ but depend on $T$.
Moreover, the   derivatives of $X(\cdot, t ,\tau)$ are bounded by
 {$$
\norm{
\partial ^{\mu}_{ t }X(\cdot, t ,\tau)}_{L^{\infty}_{\tau}(H^{\nu-\mu})}\leq C_3  \forall\       t  \in(0,T],$$}where $\mu=1,2,3,4$ and $C_3$ is independent of $\eps$ but depends on $T$.
\end{proposition}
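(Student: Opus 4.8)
The plan is to recast \eqref{2scale compact} as a mild (integral) equation and to exploit systematically the single structural feature that makes everything work: the right-hand side $G_\tau$ carries an explicit prefactor $\eps$, so that $\norm{G_\tau(X)}_{H^\sigma}\leq \eps\, C(\norm{X}_{H^\sigma})$ whenever $X$ stays bounded, while the time interval has length $T/\eps$. First I would dispatch the boundedness of $\underline{X}^{[0]}$. Since $q(x,0)=\sqrt{1-\eps^2\Delta}\,\psi_1$ and $p(x,0)=\psi_2$ are independent of $\tau$, one has $\underline{X}^{[0]}=[\sqrt{1-\eps^2\Delta}\,\psi_1;\psi_2]$, and because the symbol $\sqrt{1+\eps^2|k|^2}\leq(1+|k|^2)^{1/2}$ for $\eps\leq 1$, the operator $\sqrt{1-\eps^2\Delta}$ maps $H^{\nu+1}\to H^{\nu}$ with $\eps$-independent norm; together with Assumption \ref{ass} this gives $\norm{\underline{X}^{[0]}}_{H^\nu}\leq C$ uniformly. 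The correctors in \eqref{kapa re2} involve only the $\eps$-uniformly bounded operators $\Pi$, $A=L^{-1}(I-\Pi)$ and derivatives of $G_\tau$ controlled by Assumption \ref{ass}, and since $\nu\geq 4+d$ makes $H^\nu$ a Banach algebra, each $\kappa_j\in\mathcal{X}^{\nu}$ with $\eps$-independent norm; hence $\norm{X_0(\cdot)}_{H^\nu}\leq C$.

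For existence, uniqueness and the a priori bound I would use the transport semigroup generated by $-\tfrac1\eps\partial_\tau$, namely the shift $(S(\tilde{t})v)(\tau)=v(\tau-\tilde{t}/\eps)$, which is an isometry for the (sup-in-$\tau$) $H^\nu_x$ norm since it acts only in the periodic variable $\tau$. Writing the Duhamel formula $X(\tilde{t})=S(\tilde{t})X_0+\int_0^{\tilde{t}}S(\tilde{t}-s)G_\tau(X(s))\,ds$, a Banach fixed-point argument in $C^0([0,\delta];H^\nu)$ gives local existence and uniqueness, the Lipschitz estimate for $G_\tau$ coming from the Banach-algebra property. For the global bound on $[0,T/\eps]$, the isometry of $S$ yields $\norm{X(\tilde{t})}_{H^\nu}\leq \norm{X_0}_{H^\nu}+\eps\int_0^{\tilde{t}}C(\norm{X(s)}_{H^\nu})\,ds$; as long as the solution remains in a fixed ball the integrand is $\leq \eps C_*$, so the contribution is $\leq \eps C_*\,(T/\eps)=C_*T=\mathcal{O}(1)$, and a standard continuation/bootstrap (or Gronwall after linearising the Lipschitz bound) closes the a priori estimate, extends the solution to the whole interval, and delivers $\norm{X}_{H^\nu}\leq C_1\norm{X_0}_{H^\nu}\leq C_2$ with $\eps$-independent constants. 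This is exactly where the long time $T/\eps$ is compensated by the smallness of $G_\tau$.

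The main obstacle is the derivative bound, where the singular term $\tfrac1\eps\partial_\tau$ must not be estimated naively — a crude bound would lose two powers of $\eps$ per derivative. I would work with the splitting $X=\underline{X}+\kappa$, $\underline{X}=\Pi X$, $\Pi\kappa=0$, for which the projected equation gives $\partial_{\tilde{t}}\underline{X}=\Pi G_\tau(X)=\mathcal{O}(\eps)$ in $H^\nu$, while applying $(I-\Pi)$ and then $L=\partial_\tau$ to \eqref{kapa de} (using $LA=I-\Pi$ and $LL^{-1}=I$ on mean-zero functions from \eqref{pro ope}) yields the key relation $\partial_\tau\kappa=\eps(I-\Pi)G_\tau(X)-\eps\,\partial_{\tilde{t}}\kappa$. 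Since $\kappa$ depends on $\tilde{t}$ only through $\underline{X}$, the chain rule gives $\partial_{\tilde{t}}\kappa=\mathcal{O}(\eps^2)$ and hence $\partial_\tau\kappa=\mathcal{O}(\eps^2)$; consequently $\tfrac1\eps\partial_\tau X=\tfrac1\eps\partial_\tau\kappa=\mathcal{O}(\eps)$, and the equation itself gives $\partial_{\tilde{t}}X=G_\tau(X)-\tfrac1\eps\partial_\tau X=\mathcal{O}(\eps)$, the case $\mu=1$.

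The higher orders follow by induction on the \emph{paired} statement $\partial_{\tilde{t}}^\mu X=\mathcal{O}(\eps^\mu)$ together with $\partial_\tau\partial_{\tilde{t}}^{\mu-1}X=\mathcal{O}(\eps^{\mu+1})$: differentiating the equation in $\tilde{t}$ and re-using the second half of the hypothesis keeps the dangerous term $\tfrac1\eps\partial_\tau\partial_{\tilde{t}}^{\mu-1}X$ of size $\mathcal{O}(\eps^\mu)$, while the $\tau$-derivatives that fall on the explicit $\tau$ inside $G_\tau$ cost at most one spatial derivative each (but no power of $\eps$) by Assumption \ref{ass}, which is exactly what produces the stated loss $H^{\nu}\to H^{\nu-\mu}$. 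The delicate bookkeeping — verifying that every extra $\partial_\tau$ on the corrector regenerates a factor $(I-\Pi)G_\tau=\mathcal{O}(\eps)$ rather than a bare $\tfrac1\eps$, and that the regularity loss never exceeds one order per derivative — is the crux of the whole argument.
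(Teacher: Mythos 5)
Your treatment of the initial data and of existence, uniqueness and the $\mathcal{O}(1)$ bound is correct and is essentially the paper's own argument: your shift semigroup $(S(\tilde{t})v)(\tau)=v(\tau-\tilde{t}/\eps)$ is the mild-solution form of the paper's change of unknown $\chi(\tilde{t},\tau)=X(\cdot,\tilde{t},\tau+\tilde{t}/\eps)$ (Lemma \ref{add lem}), and both exploit the same cancellation $\eps\cdot(T/\eps)=T$. The genuine gap is in the derivative bounds, at precisely the step you yourself call the crux. The assertion ``since $\kappa$ depends on $\tilde{t}$ only through $\underline{X}$, the chain rule gives $\partial_{\tilde{t}}\kappa=\mathcal{O}(\eps^2)$'' is the Chapman--Enskog slaving \emph{ansatz}; for the exact solution of \eqref{2scale compact} it is not an identity but an asymptotic property whose rigorous validity is essentially equivalent to the derivative bounds you are trying to prove. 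Concretely, $\partial_{\tilde{t}}\kappa=(I-\Pi)\partial_{\tilde{t}}X$, so you are \emph{assuming} a bound stronger than the $\mu=1$ conclusion $\partial_{\tilde{t}}X=\mathcal{O}_{\mathcal{X}^{\nu-1}}(\eps)$: the argument is circular. The preparation \eqref{inv} enforces slaving only at $\tilde{t}=0$ and only to finite order; away from $\tilde{t}=0$ the mean-zero component obeys a transport equation whose free part is of the form $\kappa_0(\tau-\tilde{t}/\eps)$ and hence oscillates in $\tilde{t}$ at frequency $1/\eps$, so nothing keeps $\partial_{\tilde{t}}\kappa$ of size $\mathcal{O}(\eps^2)$ unless the special structure of the prepared data is actually used and propagated in time. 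The same objection hits your induction: the paired hypothesis $\partial_\tau\partial_{\tilde{t}}^{\mu-1}X=\mathcal{O}(\eps^{\mu+1})$ is checked for arithmetic consistency but never established.

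The way to close the gap --- and the route the paper takes --- is to differentiate the PDE rather than the ansatz. Setting $V:=\partial_{\tilde{t}}X$, $W:=\partial_{\tilde{t}}^2X$, $Y:=\partial_{\tilde{t}}^3X$, $K:=\partial_{\tilde{t}}^4X$, each derivative solves a linear transport equation of the same type as \eqref{2scale compact}, e.g. $\partial_{\tilde{t}}V+\frac{1}{\eps}\partial_\tau V=\partial_X G_\tau(X)V$, whose right-hand side again carries a factor $\eps$; the linear-growth case of Lemma \ref{add lem} then shows that $\norm{V(\tilde{t},\cdot)}$ stays of the order of $\norm{V_0}$ on the whole interval $[0,T/\eps]$. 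The powers of $\eps$ must therefore be extracted from the initial data of this hierarchy, and this is where \eqref{inv}, \eqref{kapa re2} and the operator identities \eqref{pro ope} enter: $V_0=G_\tau(X_0)-\frac{1}{\eps}LX_0$, and $L\kappa_1=LAG_\tau(\underline{X}^{[0]})=(I-\Pi)G_\tau(\underline{X}^{[0]})$ cancels the fluctuating part of $G_\tau(X_0)$ up to higher-order terms, leaving $V_0=\Pi G_\tau(\underline{X}^{[0]})+\textmd{h.o.t.}=\mathcal{O}_{\mathcal{X}^{\nu-1}}(\eps)$; analogous computations give $W_0=\mathcal{O}_{\mathcal{X}^{\nu-2}}(\eps^{2})$ and $Y_0=\mathcal{O}_{\mathcal{X}^{\nu-3}}(\eps^{3})$, and for $K_0$ one needs the further simplifications $L^3A=L^2$, $L^2A^2=I-\Pi$, $L^3A^3=I-\Pi$ to see that the dangerous term $\frac{1}{\eps}LY_0$ is in fact $\mathcal{O}_{\mathcal{X}^{\nu-4}}(\eps^{4})$. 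This ``prepared initial data for each derivative plus long-time Gronwall'' scheme is what must replace your slaving assumption; it also yields the loss of exactly one Sobolev order per derivative, since each occurrence of $\partial_X G_\tau$ or $L$ costs one derivative under Assumption \ref{ass}.
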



\textbf{Step 3. Fully discrete scheme.}
This step is devoted to the fully discrete scheme.

 Firstly, for the problem  \eqref{klein-gordon}, from the point of practical computations, the operator $-\Delta$  is in general approximated by a symmetric and positive semi-definite differential matrix.
 In this paper, we consider the Fourier spectral
collocation (FSC) (see, \cite{Shen}).
Then a
discrete  two-scale system for \eqref{2scale compact}   is obtained
 \begin{equation}\label{dis-2scale compact} \partial_{ t }  Z( t ,\tau)+\frac{1}{\eps^2}
 \partial_\tau  Z( t ,\tau)=\widehat{G} (t,\tau,Z ( t ,\tau)),\ \   t \in[0,T],\ \tau\in\bT.\end{equation}
Here
 $\widehat{G}_{\tau}(t,\tau,Z ( t ,\tau))$ with $Z :=[Z^U;Z^V]$ is the approximation of $G(t,\tau,X)$ by the Fourier spectral
collocation, where $-\Delta$ is replaced by a
$\hat{d}$-dimensional matric $A$, $Z^U$ and $Z^V$ are $\hat{d}$-dimensional vectors which are respectively  referred to the discrete $U$ and $V$ in $x$,
and the initial value of \eqref{dis-2scale compact} is
 \begin{equation}\label{dis-inv}Z _0(\tau):=\underline{Z }^{[0]}+\eps^2 \kappa_1(\tau,\underline{Z }^{[0]})+\eps^4 \kappa_2(\tau,\underline{Z }^{[0]})+\eps^6 \kappa_3(\tau,\underline{Z }^{[0]}),\end{equation}
 where  $\underline{Z }^{[0]}:=\Pi [\hat{q}(0);\hat{p}(0)]$  and $\hat{q}(0)$ or $\hat{p}(0)$ denotes  the discrete $q(x,0)$ or $p(x,0)$ in $x$, respectively. By the analysis presented in \cite{Shen}, it is obtained immediately that
$$\norm{X(x, t ,\tau)-Z( t ,\tau)}_{H^{\nu}}\leq  \delta^x_{\mathcal{F}},$$
 where
 $\delta^x_{\mathcal{F}}$  denotes {the error  brought by the Fourier pseudo-spectral method in $x$.}
%


We now present the fully discrete scheme for \eqref{2scale} which is constructed by making use of the spectral semi-discretisation  (see
\cite{Shen})  in $\tau$ and exponential integrators (see
\cite{Ostermann})  in time.
%

 For the spectral semi-discretisation  used in $\tau$, let  $\mathcal{M}:=\{-N_\tau/2,-N_\tau/2+1,\ldots,N_\tau/2\}$ with a positive integer
 $N_\tau>1$ and
$Y_{\mathcal{M}}=\textmd{span}\{e^{ \mathrm{i}   k \tau },\ k\in \mathcal{M},\ \tau \in[-\pi,\pi]\}.
$
For any periodic function $v(\tau)$ on $[-\pi, \pi]$, define the standard projection operator  $P_{\mathcal{M}} : L^2([-\pi, \pi]) \rightarrow
Y_{\mathcal{M}}$ and the trigonometric interpolation operator $I_{\mathcal{M}} : C([-\pi, \pi]) \rightarrow Y_{\mathcal{M}}$
respectively as
\begin{equation}\label{PI}(P_{\mathcal{M}}v)(\tau)=\sum\limits_{k\in \mathcal{M}}\widetilde{v_k} e^{ \mathrm{i}k \tau },\ \
(I_{\mathcal{M}}v)(\tau)=\sum\limits_{k\in \mathcal{M}}
\widehat{v_k} e^{ \mathrm{i}k \tau },\end{equation} where $\mathrm{i}=\sqrt{-1}$,  $\widetilde{v}_k$
{for $k\in \mathcal{M}$} are the Fourier transform
coefficients of the periodic function $v(\tau)$  and $\widehat{v}_k$
are the discrete Fourier transform coefficients of the vector
$\{v(\tau_k)\}_{\tau_k:=\frac{2 \pi}{N_\tau}k}$. For consistency
reasons, we assume that the first term and the last one in the
summation are taken with a factor $1/2$ here and after. Then the
Fourier spectral method is given by finding the trigonometric
polynomial
$$Z^{\mathcal{M}}( t ,\tau)=\big(Z_l^{\mathcal{M}}( t ,\tau)\big)_{l=1,2,\ldots,2\hat{d}},\  ( t ,\tau)\in[0,T]\times
[-\pi,\pi]$$
with $$
 Z_l^{\mathcal{M}}( t ,\tau)=\sum\limits_{k\in \mathcal{M}}
 \widetilde{Z_{k,l}( t )}\mathrm{e}^{\mathrm{i} k \tau }$$
such that
$$\partial_{ t } Z^{\mathcal{M}}( t ,\tau)+\frac{1}{\eps^2}\partial_\tau Z^{\mathcal{M}}( t ,\tau)= \widehat{G}(t,\tau,Z^{\mathcal{M}}( t ,\tau)).$$
{It follows from the orthogonality of the Fourier
functions and collecting all the $\widetilde{Z_{k,l}}$ in   $(N_\tau+1)$-periodic coefficient vectors $
\widetilde{\mathbf{Z}( t ) }= (\widetilde{Z_{k,l}( t ) )}$ that}
\begin{equation}\label{2scale Fourier}\begin{aligned}
&\frac{d}{d  t }\widetilde{\mathbf{Z}( t ) }=\mathrm{i}\Omega\widetilde{\mathbf{Z}( t ) }
+ \Gamma \big(t,\widetilde{\mathbf{Z}( t ) }\big),
 \end{aligned}
\end{equation}
%
where $\Gamma \big(t,\widetilde{\mathbf{Z}}\big):=-\lambda\left(
                      \begin{array}{c}
                       - \mathcal{F}\mathbf{S}(t)f\big(\mathcal{C}(t)\widetilde{\mathbf{Z}}^1+\mathcal{S}(t)\widetilde{\mathbf{Z}}^2\big)\\
                         \mathcal{F}\mathbf{C}(t)f\big(\mathcal{C}(t)\widetilde{\mathbf{Z}}^1+\mathcal{S}(t)\widetilde{\mathbf{Z}}^2\big) \\
                      \end{array}
                    \right) $, the vector $\widetilde{\mathbf{Z}}:=[\widetilde{\mathbf{Z}}^1;\widetilde{\mathbf{Z}}^2]$ is in dimension $D:=2\hat{d}\times (N_{\tau}+1)$ with its block vectors $\widetilde{\mathbf{Z}}^1,\widetilde{\mathbf{Z}}^2$ in dimension $D/2$,   $\mathcal{F}$
denotes the discrete Fourier transform, 
$\Omega:=\textmd{diag}(\Omega_1,\Omega_2,\ldots,\Omega_{2\hat{d}})$ with
$\Omega_1=\Omega_2=\cdots=\Omega_{2\hat{d}}:=\frac{1}{\eps^2}\textmd{diag}\big(\frac{N_{\tau}}{2},
  \frac{N_{\tau}}{2}-1,\ldots,-\frac{N_{\tau}}{2}\big)$, and
    \begin{equation*}  \begin{array}
[c]{l}%
\mathbf{S}(t):= \cos(t D_{A})\otimes \sin(\Omega_1)+ \sin(t D_{A})\otimes \cos(\Omega_1),\\
    \mathbf{C}(t):= \cos(t D_{A})\otimes \cos(\Omega_1)- \sin(t D_{A})\otimes \sin(\Omega_1),\\
\mathcal{S}(t):=  \frac{\cos(t D_{A})}{ \sqrt{1+\eps^2 A}}\otimes \sin(\Omega_1)+  \frac{\sin(t D_{A})}{ \sqrt{1+\eps^2 A}}\otimes \cos(\Omega_1),\\
\mathcal{C}(t):= \frac{\cos(t D_{A})}{ \sqrt{1+\eps^2 A}}\otimes \cos(\Omega_1)- \frac{\sin(t D_{A})}{ \sqrt{1+\eps^2 A}}\otimes \sin(\Omega_1),
\end{array}
\end{equation*}
with $D_{A}=\frac{  \sqrt{1+\eps^2 A}-I}{\eps^2}$.
 Our analysis {presented below} will use the {entries} of  $\widetilde{\mathbf{Z}}$, which are  denoted by
  \begin{equation*} \begin{aligned}\widetilde{\mathbf{Z}}=\big(&\widetilde{Z_{-\frac{N_{\tau}}{2},1}},\widetilde{Z_{-\frac{N_{\tau}}{2}+1,1}},\ldots,
 \widetilde{Z_{\frac{N_{\tau}}{2},1}},\widetilde{Z_{-\frac{N_{\tau}}{2},2}},\widetilde{Z_{-\frac{N_{\tau}}{2}+1,2}},\\
 &\ldots,
 \widetilde{Z_{\frac{N_{\tau}}{2},2}},\ldots,\widetilde{Z_{-\frac{N_{\tau}}{2},2\hat{d}}},
 \widetilde{Z_{-\frac{N_{\tau}}{2}+1,2\hat{d}}},\ldots,
 \widetilde{Z_{\frac{N_{\tau}}{2},2\hat{d}}}\big).\end{aligned}
\end{equation*}
The same     notation
 is used   for all the
vectors and diagonal matrices with the same dimension as $\widetilde{\mathbf{Z}}$.
 We also use the notations  $\widetilde{\mathbf{Z}_{:,l}}=\big(\widetilde{Z_{-\frac{N_{\tau}}{2},l}},\widetilde{Z_{-\frac{N_{\tau}}{2}+1,l}},\ldots,
 \widetilde{Z_{\frac{N_{\tau}}{2},l}}\big)$ {for
 $l=1,2,\ldots,2\hat{d}$}
  and the
$\mathcal{F} \widetilde{\mathbf{Z}}$ denotes the discrete Fourier
transform acting on each $\widetilde{\mathbf{Z}_{:,l}}$ of $
\widetilde{\mathbf{Z}}$. Then  the fully discrete scheme (FS-F) can
{read}
  \begin{equation*}
\begin{aligned}
&Z_{\mathcal{M},l}^{ni}(\tau)=\sum\limits_{k\in \mathcal{M}}
\widetilde{Z^{ni}_{k,l}}\mathrm{e}^{\mathrm{i} k \tau },\ \
Z_{\mathcal{M},l}^{n+1}(\tau)=\sum\limits_{k\in \mathcal{M}}
\widetilde{Z^{n+1}_{k,l}}\mathrm{e}^{\mathrm{i} k \tau },
\end{aligned}
\end{equation*}
for $ i=1,2,\ldots,s$,
 where we consider the following $s$-stage exponential integrators {(\cite{Ostermann}) applied  to}  \eqref{2scale Fourier}:
\begin{equation}
\begin{array}[c]{ll}%
\widetilde{Z^{ni}}&=e^{c_{i}\hh M}\widetilde{Z^{n}}+\varepsilon  \hh \textstyle\sum\limits_{\rho=1}^{s}\bar{a}_{i\rho}(\hh M)
 \Gamma \big(t _n+c_{\rho}\hh,\widetilde{Z^{n\rho}}\big),\\
\widetilde{Z^{n+1}}&=e^{\hh M}\widetilde{Z^{n}}+\varepsilon  \hh \textstyle\sum\limits_{\rho=1}^{s}\bar{b}_{\rho}(\hh M)
 \Gamma \big(t _n+c_{\rho}\hh,\widetilde{Z^{n\rho}}\big).
\end{array}
 \label{erk dingyi}%
\end{equation}
  Here $i=1,2,\ldots,s$ with $s\geq1$, $\hh $ is the time stepsize,   $  n=0,1,\ldots,T/ \hh -1,$ $M:=\mathrm{i}\Omega$, $c_i$  are constants belonging to $[0,1]$, and $\bar{a}_{i\rho}(\hh M)$,
$\bar{b}_{i}(\hh M)$ are matrix-valued functions of $\hh M$.

The above procedure, however,  is unsuitable in practice because of
the computation of Fourier transform coefficients. In order to
{find} an efficient implementation, we now consider the
discrete Fourier transform coefficients instead of Fourier transform
coefficients. {This gives}   the following
scheme.  For a positive integer
 $N_\tau>1$, let
$\tau_k=\frac{2\pi}{N_\tau}k$ with $k\in \mathcal{M}:=\{-N_\tau/2,-N_\tau/2+1,\ldots,N_\tau/2\}$
and
$Z_{k,l}^{ni}\approx Z_l( t _n+c_i\hh,\tau_k),\ Z_{k,l}^{n}\approx Z_l( t _n,\tau_k)$ for $l=1,2,\ldots,2\hat{d}.$ An
  exponential Fourier spectral discretization (FS-D) is defined as
  \begin{equation} \begin{aligned}
&Z_{j,l}^{ni} =\sum\limits_{k\in \mathcal{M}}
\widehat{Z^{ni}_{k,l}}\mathrm{e}^{\mathrm{i} k \tau_{j}  },\ \
 Z_{j,l}^{n+1} =\sum\limits_{k\in \mathcal{M}}
\widehat{Z^{n+1}_{k,l}}\mathrm{e}^{\mathrm{i} k \tau_{j}  },
\end{aligned} \label{cs ei-ful1}%
\end{equation}
where $j\in \mathcal{M}$ and
\begin{equation}\label{cs ei-ful2}
\begin{array}[c]{ll}%
\widehat{Z^{ni}}=e^{c_{i}\hh M}\widehat{Z^{n}}+\varepsilon  \hh \textstyle\sum\limits_{\rho=1}^{s}\bar{a}_{i\rho}(\hh M)
\Gamma \big(t _n+c_{\rho}\hh,\widehat{Z^{n\rho}}\big),\\
\widehat{Z^{n+1}}=e^{\hh M}\widehat{Z^{n}}+\varepsilon  \hh \textstyle\sum\limits_{\rho=1}^{s}\bar{b}_{\rho}(\hh M)
\Gamma \big( t _n+c_{\rho}\hh,\widehat{Z^{n\rho}}\big).
\end{array}\end{equation}
Here  $0<\hh<1$ is the time stepsize and $s\geq1$ is the stage of the implicit exponential integrator with the coefficients $c_i$, $\bar{a}_{i\rho}(\hh M)$ and
$\bar{b}_{\rho}(\hh M)$.

Based on the above three steps, we obtain the fully discrete scheme, which is described as follows.

\begin{definition}\label{defFDS}
 (\textbf{Fully discrete scheme})
 The fully discrete scheme for the   nonlinear relativistic Klein--Gordon equation   \eqref{klein-gordon}  is defined as follows.

\begin{itemize}
  \item  With the    two-scale technologies stated in Steps 1-2, we get a  highly oscillatory two-scale
  system  \eqref{2scale compact} with the  initial condition \eqref{inv}  over {the  time} interval $[0,T]$.

    \item Consider the Fourier spectral
collocation  (\cite{Shen}) for the approximation of the  operator $-\Delta$ and then a
discrete  two-scale system   \eqref{dis-2scale compact}  over   $[0,T]$ with the initial value  \eqref{dis-inv} is obtained.

  \item   Then
  choose  a positive time stepsize $\hh$, and use the
  exponential Fourier spectral discretization (FS-D)  \eqref{cs ei-ful1}
   to solve  \eqref{dis-2scale compact}.
This produces the numerical approximation $$Z_{l}^{n}:=\sum\limits_{j\in \mathcal{M}}\widehat{Z_{j,l}^{n}}\fe^{\mathrm{i}j n\hh/\eps^2}\approx   Z (n\hh,n\hh/\eps^2)$$   of \eqref{dis-2scale compact}
 for $ l=1,2,\ldots,2\tilde{d}$ and $n=1,2,\ldots,T/ \hh $.

  \item  Finally, the numerical approximation $u^{n}\approx u(\cdot,n\hh  ),\ v^{n}\approx v(\cdot,n\hh  )$ of the original system \eqref{klein-gordon} is given by
{\begin{equation*} \begin{aligned}
u^{n}= &\sqrt{1+\eps^2 A} ^{-1} \Big[\cos\big(  n\hh/\eps^2+ n\hh D_{A}\big)(Z_{l}^{n})_{l=1,2,\ldots,\tilde{d}}\\&
\qquad\qquad\qquad+  \sin\big(  n\hh/\eps^2+ n\hh D_{A}\big) (Z_{\tilde{d}+l}^{n})_{l=1,2,\ldots,\tilde{d}}\Big],\\
 v^{n}=&\eps ^{-2}\Big[ -  \sin\big(  n\hh/\eps^2+ n\hh D_{A}\big)(Z_{l}^{n})_{l=1,2,\ldots,\tilde{d}}\\&\qquad +\cos\big(  n\hh/\eps^2+ n\hh D_{A}\big)(Z_{\tilde{d}+l}^{n})_{l=1,2,\ldots,\tilde{d}}\Big],
 \end{aligned}
\end{equation*}}where $n=1,2,\ldots,T/ \hh .$
\end{itemize}

\end{definition}

%


\subsection{Some practical integrators}
The above procedure fails to be  practical  unless the coefficients $c_i$, $\bar{a}_{i\rho}(\hh M)$ and
$\bar{b}_{i}(\hh M)$ appearing in \eqref{cs ei-ful2} are determined. To this end,  the symmetry and stiff order conditions of \eqref{cs ei-ful2}  are needed.

\begin{proposition} \label{symmetric thm} (\textbf{Symmetric conditions})
The $s$-stage implicit exponential integrator   \eqref{cs ei-ful2} is symmetric
if and only if  for $i,\rho=1,2,\ldots,s$, its coefficients satisfy
\begin{equation}
\begin{array}[c]{ll}%
c_{i}=1-c_{s+1-i},\ \bar{b}_{\rho}(\hh M)=e^{\hh M}\bar{b}_{s+1-\rho}(-\hh M),\\
\bar{a}_{i\rho}(\hh M)=e^{c_{i}\hh M}\bar{b}_{s+1-\rho}(-\hh M)-\bar{a}_{s+1-i,s+1-\rho}(-\hh M).
\end{array}\label{erkdc}%
\end{equation}

\end{proposition}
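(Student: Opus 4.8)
The plan is to read the $s$-stage scheme \eqref{cs ei-ful2} as a one-step propagator $\Phi_{\hh}:\widehat{Z^{n}}\mapsto\widehat{Z^{n+1}}$ and to recognize that symmetry means $\Phi_{\hh}=\Phi_{\hh}^{*}$, where the adjoint $\Phi_{\hh}^{*}:=\Phi_{-\hh}^{-1}$ is obtained by interchanging the input and the output and negating the stepsize. So first I would write down $\Phi_{-\hh}(\widehat{Z^{n+1}})=\widehat{Z^{n}}$ explicitly, using a set of fresh internal stages $W^{i}$: this reads $W^{i}=e^{-c_{i}\hh M}\widehat{Z^{n+1}}-\varepsilon\hh\sum_{\rho}\bar{a}_{i\rho}(-\hh M)\Gamma(W^{\rho})$ together with the update $\widehat{Z^{n}}=e^{-\hh M}\widehat{Z^{n+1}}-\varepsilon\hh\sum_{\rho}\bar{b}_{\rho}(-\hh M)\Gamma(W^{\rho})$.

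Next I would recast this adjoint scheme into the standard forward form by solving for $\widehat{Z^{n+1}}$ in terms of $\widehat{Z^{n}}$. Multiplying the update by $e^{\hh M}$ yields $\widehat{Z^{n+1}}=e^{\hh M}\widehat{Z^{n}}+\varepsilon\hh\sum_{\rho}e^{\hh M}\bar{b}_{\rho}(-\hh M)\Gamma(W^{\rho})$, and substituting this back into the stage equations expresses each $W^{i}$ through $\widehat{Z^{n}}$. The decisive bookkeeping step is the relabeling of the internal stages by the reversal $i\mapsto s+1-i$ and $\rho\mapsto s+1-\rho$, i.e. the identification $W^{i}=\widehat{Z^{n,s+1-i}}$; this is the canonical matching that keeps the nodes ordered in $[0,1]$ and aligns the adjoint stages with those of $\Phi_{\hh}$.

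I would then compare the resulting scheme with \eqref{cs ei-ful2} coefficient by coefficient, treating the quantities $\Gamma(\widehat{Z^{n\rho}})$ as formally independent so that equality of the two schemes is equivalent to equality of the matrix coefficients. Matching the coefficient of $\widehat{Z^{n}}$ in the update is automatic, while matching the coefficient of each $\Gamma$-term forces $\bar{b}_{\rho}(\hh M)=e^{\hh M}\bar{b}_{s+1-\rho}(-\hh M)$. Matching the coefficient of $\widehat{Z^{n}}$ in the $i$-th stage gives $e^{c_{i}\hh M}=e^{(1-c_{s+1-i})\hh M}$, hence $c_{i}=1-c_{s+1-i}$ since $\Omega$ has a nonzero entry and the identity must hold for all $\hh$. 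Finally, matching the $\Gamma$-coefficients in the stage equation gives $\bar{a}_{i\rho}(\hh M)=e^{-c_{s+1-i}\hh M}\bar{b}_{\rho}(\hh M)-\bar{a}_{s+1-i,s+1-\rho}(-\hh M)$, into which I insert the two relations already derived, $c_{s+1-i}=1-c_{i}$ and $\bar{b}_{\rho}(\hh M)=e^{\hh M}\bar{b}_{s+1-\rho}(-\hh M)$, so that $e^{-c_{s+1-i}\hh M}\bar{b}_{\rho}(\hh M)=e^{c_{i}\hh M}\bar{b}_{s+1-\rho}(-\hh M)$ and the condition collapses to the claimed form. Since every comparison is an equivalence, reading the chain backward establishes that the three identities are also sufficient, giving both directions of the "if and only if" simultaneously.

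The main obstacle I anticipate is conceptual rather than computational: the adjoint method must be set up correctly, since the inversion that turns $\Phi_{-\hh}(\widehat{Z^{n+1}})=\widehat{Z^{n}}$ back into a forward propagator is exactly what produces the factor $e^{\hh M}$ responsible for the $e^{\hh M}$ in the $\bar{b}$-condition, and the stage reversal $i\mapsto s+1-i$ must be justified as the correct identification of adjoint stages with original stages. I would also take care to state explicitly that symmetry is understood as an algebraic identity valid for the scheme with a generic nonlinearity $\Gamma$, which is precisely what legitimizes the term-by-term matching of coefficients; without this remark the necessity direction would not be strictly forced.
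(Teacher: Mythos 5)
Your proposal is correct: I checked the adjoint computation and the coefficient matching, and both directions go through as you describe (your intermediate form $\bar{a}_{i\rho}(\hh M)=e^{-c_{s+1-i}\hh M}\bar{b}_{\rho}(\hh M)-\bar{a}_{s+1-i,s+1-\rho}(-\hh M)$ does collapse to \eqref{erkdc} once the $c$- and $\bar b$-relations are inserted). The paper's own proof, by contrast, is a single sentence: under the conditions \eqref{erkdc} it is ``trivial to verify'' that the scheme \eqref{cs ei-ful2} remains the same after exchanging $n+1\leftrightarrow n$ and $\hh\leftrightarrow-\hh$. So the paper verifies only the ``if'' half and never addresses the ``only if'' half of the stated equivalence. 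Your route rests on the same notion of symmetry (self-adjointness under that exchange, formalized as $\Phi_{\hh}=\Phi_{-\hh}^{-1}$), but you run the computation in the constructive direction --- invert the update, substitute into the stages, reverse the stage labels, and match coefficients --- which is exactly what produces the necessity direction the paper omits. What each approach buys: the paper's buys brevity at the cost of proving strictly less than the proposition claims; yours buys the full equivalence, plus an honest identification of the two points on which necessity actually hinges. Both of your caveats are substantive, not cosmetic: term-by-term matching of the $\Gamma$-coefficients is legitimate only if symmetry is required as an algebraic identity for generic $\Gamma$ (for one fixed nonlinearity, distinct coefficient sets could define the same map), and the relations \eqref{erkdc} are necessary only relative to the identification of adjoint stage $i$ with original stage $s+1-i$ --- a different bijection of stages would yield formally different but equivalent conditions, so the ``if and only if'' must be read modulo that canonical relabeling. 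Neither point is confronted in the paper, so your proof is not just a valid alternative but a strictly more complete argument for the proposition as stated.
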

\begin{proof} Under the conditions \eqref{erkdc},   it is trivial to verify that the method \eqref{cs ei-ful2} remains the same after exchanging $n+1\leftrightarrow n$ and $\hh\leftrightarrow -\hh$. This completes the proof immediately.
\end{proof}

\begin{proposition} \label{stiff thm} (\textbf{Stiff order conditions})
 Define
 \begin{equation*}
\begin{aligned} &  \psi_{\rho}(z)=\varphi_{\rho}(z)-\sum_{i=1}^s\bar{b}_i(z)\frac{c_i^{\rho-1}}{(\rho-1)!},\\&
\psi_{\rho,i}(z)=\varphi_{\rho}(c_iz)c_i^{\rho}-\sum_{\tilde{i}=1}^s\bar{a}_{i\tilde{i}}(z)\frac{c_{\tilde{i}}^{\rho-1}}{(\rho-1)!},\ \ i=1,2,\ldots,s,
 \end{aligned}
\end{equation*}
where the  notations $\varphi_{\rho}$  (\cite{Ostermann}) are defined by $\varphi_{\rho}(z)=\int_0^1
\theta^{\rho-1}\fe^{(1-\theta)z}/(\rho-1)!d\theta$ for $\rho=1,2,\ldots $.
For a fixed number $1\leq r\leq4,$ the order
conditions of Table  \ref{tab1} are assumed to be true up to order $r$ and the condition $\psi_{r}(\hh M)=0$ is weakened to the form $\psi_{r}(0)=0$. Under these assumptions,  the conditions of Assumption \ref{ass},    and the
local assumptions of $\widetilde{Z^{n}}=\widetilde{Z( t _n)}$,
 there exists a constant $\hh_0$ independent of $\eps$ such that for $0<\hh\leq \hh_0$,
  the local error bounds
satisfy the following inequalities
\begin{equation*}
\begin{aligned} & \|\widetilde{Z^{ni}}-\widetilde{Z( t _{n}+c_i\hh)}\|_{H^{\nu-r}}\leq C   \hh^{r}, \\&
  \|\widetilde{Z^{n+1}}-\widetilde{Z( t _{n+1})}\|_{H^{\nu-r}}\leq C \big(  \hh^{r}\norm{\psi_{r}(\hh M)}_{H^{\nu-r+1}}+  \hh^{r+1}\big), \end{aligned} 
\end{equation*}
where $ 0\leq n\leq T/ \hh,$ and $C>0$ is a constant depending on $T$ but is independent of $\eps$ and $\hh$.

\begin{table}[t!]
\renewcommand{\arraystretch}{1.5}
\centering
\begin{tabular}
[c]{|c|c|}\hline Stiff order conditions & Order $r$\\ \hline
$\psi_{1}(\hh M)=0$ & 1\\\hline
$\psi_{2}(\hh M)=0,\ \psi_{1,i}(\hh M)=0$ & 2\\\hline
$\psi_{3}(\hh M)=0,\  \psi_{2,i}(\hh M)=0$ & 3\\\hline
$\psi_{4}(\hh M)=0,\ \psi_{3,i}(\hh M)=0$ & 4\\ \hline
\end{tabular}
\caption{Stiff order conditions. }%
\label{tab1}%
\end{table}
\end{proposition}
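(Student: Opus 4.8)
The plan is to run the classical local error analysis for exponential integrators, but to keep careful track of the powers of $\eps$ supplied by Proposition \ref{TS M}; this is exactly what upgrades the usual $\mathcal{O}(\hh^r)$ defect into the improved $\mathcal{O}(\eps^r\hh^r)$ estimate. First I would write the variation-of-constants formula for the exact solution of \eqref{2scale Fourier}: setting $g(\tilde{t}):=\Gamma(\widetilde{X(\tilde{t})})$, one has
\[
\widetilde{X(\tilde{t}_n+c\hh)}=e^{c\hh M}\widetilde{X(\tilde{t}_n)}+\eps\hh\int_0^{c}e^{(c-s)\hh M}g(\tilde{t}_n+\hh s)\,ds,
\]
and the elementary identity $\int_0^{c}e^{(c-s)\hh M}\tfrac{s^{\rho-1}}{(\rho-1)!}\,ds=c^{\rho}\varphi_{\rho}(c\hh M)$ ties these integrals to the $\varphi$-functions. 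Subtracting this (with $c=c_i$ and with $c=1$) from the scheme \eqref{cs ei-ful2}, and using the local assumption $\widetilde{Z^n}=\widetilde{X(\tilde{t}_n)}$, I would express the stage and step defects $\delta_{ni}:=\widetilde{Z^{ni}}-\widetilde{X(\tilde{t}_n+c_i\hh)}$ and $\delta_{n+1}:=\widetilde{Z^{n+1}}-\widetilde{X(\tilde{t}_{n+1})}$ as a quadrature defect $\Theta_{ni}$ (resp.\ $\Theta_{n+1}$) plus a coupling term $\eps\hh\sum_\rho\bar a_{i\rho}(\hh M)[\Gamma(\widetilde{Z^{n\rho}})-g(\tilde{t}_n+c_\rho\hh)]$ (and the analogue with $\bar b_\rho$).

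The decisive preliminary step is the derivative bound $\|g^{(j)}(\tilde{t})\|_{H^{\nu-j}}=\mathcal{O}(\eps^{j})$ for $j=0,1,\dots,4$. This follows by applying Fa\`a di Bruno to $g=\Gamma\circ\widetilde{X}$ in the Banach algebra $H^{\nu}$ (here $\nu\ge 4+d$ from Assumption \ref{ass} is used): every factor $\partial_{\tilde{t}}^{k}\widetilde{X}$ contributes a power $\eps^{k}$ by the estimates of Proposition \ref{TS M}, while the derivatives of $G_\tau$ are uniformly bounded by Assumption \ref{ass}. I would then Taylor-expand $g(\tilde{t}_n+\sigma)=\sum_{j=1}^{r}\tfrac{\sigma^{j-1}}{(j-1)!}g^{(j-1)}(\tilde{t}_n)+R$ with $R=\mathcal{O}(\eps^{r}\hh^{r})$, substitute into both the quadrature of the scheme and the exact integral, and collect the coefficient of each $g^{(j-1)}(\tilde{t}_n)$. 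A direct comparison with the definitions shows that this coefficient is exactly $-\eps\hh^{j}\psi_{j,i}(\hh M)$ in $\Theta_{ni}$ and exactly $-\eps\hh^{j}\psi_{j}(\hh M)$ in $\Theta_{n+1}$.

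Now the order conditions of Table \ref{tab1} up to order $r$ annihilate every term with $j\le r-1$ (they give $\psi_{j,i}(\hh M)=0$ and $\psi_{j}(\hh M)=0$ for $j\le r-1$), so only the $j=r$ terms survive. For the stages, $\psi_{r,i}(\hh M)$ is a uniformly bounded matrix function and $g^{(r-1)}=\mathcal{O}(\eps^{r-1})$, giving $\|\Theta_{ni}\|_{H^{\nu-r}}=\mathcal{O}(\eps^{r}\hh^{r})$; for the step the surviving term is $-\eps\hh^{r}\psi_r(\hh M)g^{(r-1)}(\tilde{t}_n)$, whose norm is $\mathcal{O}\big(\eps^{r}\hh^{r}\|\psi_r(\hh M)\|_{H^{\nu-r+1}}\big)$, the weakened hypothesis $\psi_r(0)=0$ ensuring that the non-oscillatory ($k=0$) Fourier mode contributes nothing. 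The remainders $R$ contribute only $\mathcal{O}(\eps^{r+1}\hh^{r+1})$. It remains to close the implicitness: the stage defects satisfy a linear fixed-point relation $\delta_{ni}=\Theta_{ni}+\eps\hh\sum_\rho \bar a_{i\rho}(\hh M)L_{n\rho}\delta_{n\rho}$ with bounded $L_{n\rho}=\int_0^1\partial_X\Gamma(\cdots)\,d\theta$; since $\hh\le\hh_0$ (with $\hh_0$ independent of $\eps$, using $\eps\le1$) makes this a contraction, one obtains $\|\delta_{ni}\|_{H^{\nu-r}}=\mathcal{O}(\eps^{r}\hh^{r})$, and feeding this into the step relation adds only $\eps\hh\cdot\mathcal{O}(\eps^r\hh^r)=\mathcal{O}(\eps^{r+1}\hh^{r+1})$ to $\delta_{n+1}$, which yields the two stated bounds.

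Finally, the main obstacle: the whole argument is useful only if all the matrix functions $\varphi_\rho(\hh M)$, $\bar a_{i\rho}(\hh M)$, $\bar b_\rho(\hh M)$ and the propagator $e^{\hh M}$ are bounded uniformly in $\eps$ and $\hh$, even though $M=\mathrm{i}\Omega$ has entries of size $\mathcal{O}(1/\eps)$ and $\hh$ may be a large step. The resolution is that $M$ is skew-Hermitian, so $\hh M$ has purely imaginary spectrum and each such function is controlled by the supremum of its scalar symbol on $\mathrm{i}\mathbb{R}$; this, together with the $\eps$-gain from Proposition \ref{TS M} and the boundedness of $\partial^\alpha_\tau\partial^\beta_X G_\tau$ in Assumption \ref{ass}, is what makes every constant independent of $\eps$ and $\hh$ and is precisely what permits the large-stepsize regime. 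The remaining subtlety is bookkeeping of the loss of spatial regularity: each $g^{(j)}$ costs $j$ derivatives, so the natural space for the surviving $j=r$ term is $H^{\nu-r+1}$, which is exactly where the factor $\|\psi_r(\hh M)\|_{H^{\nu-r+1}}$ enters the step bound.
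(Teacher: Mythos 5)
Your proposal is correct and follows essentially the same route as the paper's own proof: Duhamel's formula plus Taylor expansion of $g(\tilde{t})=\Gamma(\widetilde{X(\tilde{t})})$, identification of the defect coefficients with $-\eps\hh^{j}\psi_{j,i}(\hh M)$ and $-\eps\hh^{j}\psi_{j}(\hh M)$, the $\eps$-power bookkeeping $\frac{\textmd{d}^{\rho}}{\textmd{d}\tilde{t}^{\rho}}\widetilde{\Gamma}=\mathcal{O}_{H^{\nu-\rho}}(\eps^{\rho})$ supplied by Proposition \ref{TS M}, uniform boundedness of the $\varphi$-functions and coefficients on the skew-Hermitian $\hh M$, and a contraction argument to close the implicit stages. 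The only cosmetic difference is that you write the fixed-point closing of the stage errors explicitly, whereas the paper handles this through the boundedness of its fixed-point iteration and the linearized error recursion.
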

\begin{proof}%
The  proof will be given in Section \ref{sec:4}  combined with the analysis of convergence.
\end{proof}

The practical integrators {presented below} will be based
on these symmetric conditions and stiff order conditions.

\textbf{Third-order integrator}.
  We first consider two-stage integrators, i.e., $s=2$.  Solving the order conditions
$\psi_{1}(\hh M)=
\psi_{2}(\hh M)=0
$
leads to $$\bar{b}_{1}(\hh M)= \frac{-c_2 \varphi_1(\hh M) + \varphi_2(\hh M)}{c_1 - c_2},\ \
\bar{b}_{2}(\hh M)= \frac{c_1 \varphi_1(\hh M) - \varphi_2(\hh M)}{c_1 - c_2}.$$
Then using some other order conditions
$$
 \psi_{1,1}(\hh M)= \psi_{1,2}(\hh M)=\psi_{2,2}(\hh M)=0$$
and a symmetric condition
$$\bar{a}_{12}(\hh M)+ \bar{a}_{21}(-\hh M)=\varphi_0(c_1\hh M) \bar{b}_1(-\hh M),$$
we get the results of $\bar{a}_{i\rho}$ as
\begin{equation*}
\begin{aligned}
&\bar{a}_{21}=\frac{c_2^2(-\varphi_{12}+\varphi_{22})}{c_1-c_2},\  \ \bar{a}_{11}=-\bar{a}_{12}+c_1\varphi_{11}, \\&
 \bar{a}_{22}=-\bar{a}_{21}+c_2\varphi_{12}, \
  \bar{a}_{12}=-\bar{a}_{21}(-\hh M)+\varphi_{01} \bar{b}_1(-\hh M),
\end{aligned}
\end{equation*}
where $\varphi_{ij}:=\varphi_i(c_j\hh M)$.
 On noticing $c_1=1-c_2$, it can be {verified} that this class of integrators is at least order two.
 As an example, we choose $c_1=\frac{3-\sqrt{3}}{6}$ which is obtained by requiring a further condition $\psi_{3}(0)=0$ and denote the corresponding method
  as \textbf{S2O3}. For this method, it can be easily checked that $\psi_{2,i}(\hh M)=0$ for $i=1,2$ but $\psi_{3}(\hh M)\neq 0$. Thence it is of order three.

\textbf{Fourth-order integrator}. We now continue with three-stage ($s=3$) integrators and obtain their coefficients by solving
$$\psi_{i}(\hh M)=0\   \textmd{and}  \ \psi_{ \rho,i}(\hh M)=0\   \textmd{for}  \  i,\rho=1,2,3.
$$
The choice of $c_1=1,\   c_2=1/2,\ c_3=0$ and the corresponding results
  \begin{equation*}
\begin{aligned} &\bar{a}_{31}=\bar{a}_{32}=\bar{a}_{33}=0, \  \
\bar{a}_{21}=-\frac{1}{4} \varphi_{22} + \frac{1}{2}\varphi_{32} ,\ \
 \bar{a}_{22}=\varphi_{22} -\varphi_{32}, \\&\bar{a}_{23}=\frac{1}{2}\varphi_{12}-\frac{3}{4}\varphi_{22} +\frac{1}{2}\varphi_3,\ \
 \bar{a}_{11}=\bar{b}_{1}= 4\varphi_3  - \varphi_2 ,\ \ \ \
 \bar{a}_{12}=\bar{b}_{2}=4\varphi_2 -8\varphi_3,\\
 &\bar{a}_{13}=\bar{b}_{3}=\varphi_1-3\varphi_2 +4\varphi_3,\\
\end{aligned}
\end{equation*}
determine this integrator.  It is noted that this method satisfies all the stiff order conditions of order four and symmetric conditions.
 This integrator is referred as \textbf{S3O4}.

%

\section{Main results and numerical tests}\label{sec:3}
In this section, we shall present the main results of this paper. The first one is about convergence and the second is devoted to long time {energy near conservation.}
To support these two results, a numerical experiment  with numerical results is carried out in the second part of this section.
\subsection{Main results}\label{sec:31}
\begin{theorem} \label{UA thm} (\textbf{Convergence}) Under the conditions of Assumption \ref{ass}, Propositions \ref{TS M} and \ref{stiff thm}, for the final numerical solutions  $u^n,\ v^n$  produced
by Definition \ref{defFDS} with S2O3 or S3O4, the global errors are \begin{equation*}
\begin{aligned}
\textmd{S2O3}:\ &\norm{u^n-u(\cdot,n\hh\eps)}_{H^{\nu-1}}  \leq {C_1} (  \hh^3+\delta_{\mathcal{F}}),\\
&\norm{v^n-v(\cdot,n\hh\eps)}_{H^{\nu-2}} \leq {C_2} ( \hh^3/\eps^2+\delta_{\mathcal{F}}),\\
\textmd{S3O4}:\ &\norm{u^n-u(\cdot,n\hh\eps)}_{H^{\nu-3}}  \leq {C_3} (  \hh^4+\delta_{\mathcal{F}}),\\ &\norm{v^n-v(\cdot,n\hh\eps)}_{H^{\nu-4}} \leq {C_4} (  \hh^4/\eps^2+\delta_{\mathcal{F}}),\\
\end{aligned}
\end{equation*}
where $0\leq n\leq T/\hh$,  {the constants $C_1$-$C_4$ depend on $T$,  $\norm{\psi_1(x)}_{H^{\nu+1}}$ and $\norm{\psi_2(x)}_{H^{\nu}}$,} but are independent of $n, \hh, \eps $. Here $\delta_{\mathcal{F}}$  denotes {the  error in $x$ and $\tau$ brought by the Fourier pseudo-spectral method.} {We point out that $\nu\geq 4+d$ is required for S3O4 and  it can be weakened to $\nu\geq 2+d$ for S2O3.}
\end{theorem}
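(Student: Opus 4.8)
The plan is to collapse the statement about the reconstructed fields $u^n,v^n$ into a single estimate for the fully discrete two-scale unknown and then to prove that estimate with a Lady--Windermere--fan argument resting on Propositions \ref{TS M} and \ref{stiff thm}. First I would decompose the total error by the triangle inequality into three pieces: the two-scale modelling error between the exact $(q,p)$ of \eqref{new-system} and the exact two-scale solution evaluated on the diagonal $\tau=\tilde t/\eps$ with the well-prepared data \eqref{inv}, which is $\mathcal{O}(\eps^{4})$ by the construction of $\kappa_1,\kappa_2,\kappa_3$; the combined spatial and $\tau$-projection error, which is exactly what is denoted $\delta_{\mathcal{F}}$; and the genuinely numerical error of the exponential integrator \eqref{cs ei-ful2}. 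Because the reconstruction in Definition \ref{defFDS} reads $u^n=\sqrt{A}^{-1}[\cos(\cdot)Z^{U,n}+\sin(\cdot)Z^{V,n}]$ and $v^n=\eps^{-2}[-\sin(\cdot)Z^{U,n}+\cos(\cdot)Z^{V,n}]$ with unitary propagators, the velocity carries an extra $\eps^{-2}$ while the position is one Sobolev order smoother; the gap in the Sobolev indices ($H^{\nu-1}$ versus $H^{\nu-2}$, and $H^{\nu-3}$ versus $H^{\nu-4}$) reflects the smoothing of $\sqrt{A}^{-1}$ together with the $u$-versus-$\partial_t u$ regularity splitting of Assumption \ref{ass}, while the loss of $r$ derivatives stems from the $\tau$-discretization and from differentiating $\Gamma$. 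Everything thus reduces to showing $\|Z^n-Z\|\le C(\eps^{\min(r+1,4)}\hh^{r}+\delta_{\mathcal{F}})$ with $r=2$ for S2O2 and $r=4$ for S3O4.

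For the numerical part I would write the recursion for the global error $e_n:=\widetilde{Z^n}-\widetilde{Z(\tilde t_n)}$ by subtracting the exact one-step map, carrying its local defect $d_{n+1}$, from the scheme, which gives $e_{n+1}=e^{\hh M}e_n+\eps\hh\sum_\rho\bar b_\rho(\hh M)\big(\Gamma(\widetilde{Z^{n\rho}})-\Gamma(\widetilde{Z(\tilde t_n+c_\rho\hh)})\big)-d_{n+1}$. Proposition \ref{stiff thm} supplies $\|d_{n+1}\|\le C\eps^{r+1}\hh^{r+1}$, since both S2O2 and S3O4 satisfy $\psi_r(\hh M)=0$ exactly and so kill the $\|\psi_r(\hh M)\|$-term. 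The two facts that drive the stability are that $M=\mathrm{i}\Omega$ is skew-Hermitian, so $e^{\hh M}$ is an isometry on every $H^{s}$, and that the nonlinear increment carries the small prefactor $\eps\hh$; a discrete Gronwall inequality over the $T/(\eps\hh)$ steps therefore produces a stability constant of size $e^{C\eps\hh\,n}=e^{CT}$, uniform in $\eps$, while the Banach-algebra hypothesis $\nu\ge 4+d$ controls $\Gamma$ and its derivatives in the relevant norms. Summing the defects termwise, $\sum_j\|d_j\|\le (T/(\eps\hh))\,C\eps^{r+1}\hh^{r+1}=\mathcal{O}(\eps^{r}\hh^{r})$, already gives both S3O4 bounds for position and velocity.

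The decisive and delicate point is the remaining single power of $\eps$ that upgrades the $Z$-error from $\mathcal{O}(\eps^r\hh^r)$ to $\mathcal{O}(\eps^{\min(r+1,4)}\hh^r)$; this is the ``improved'' content of the theorem and, through the reconstruction, is what produces both S2O2 bounds $\eps^{3}\hh^{2}$ and $\eps\hh^{2}$. Here the termwise summation must be replaced by a cancellation estimate for $\sum_j e^{(n-j)\hh M}d_j$ that exploits the slow time dependence guaranteed by Proposition \ref{TS M}, namely $\|\partial_{\tilde t}^{\mu}X\|_{L^\infty_\tau(H^{\nu-\mu})}\le C\eps^{\mu}$ for $\mu\le 4$. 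Writing the principal defect as $d_j\approx\eps^{r+1}\hh^{r+1}\Psi(\hh M)\Phi(\tilde t_j)$ with $\Phi:=\eps^{-r}\tfrac{d^r}{d\tilde t^r}\Gamma(Z)$ slowly varying, i.e. $\partial_{\tilde t}\Phi=\mathcal{O}(\eps)$, one performs a summation by parts against the oscillatory kernel: on the non-zero $\tau$-frequencies $\omega=k/\eps$ the geometric sum is bounded by $C/|1-e^{-\mathrm{i}\hh\omega}|$ under a non-resonance control, which turns the factor $T/(\eps\hh)$ into an $\mathcal{O}(1)$ quantity and wins the extra power of $\eps$. The ceiling at $\eps^4$ is forced by the availability of the slow-evolution bounds only up to $\mu=4$, which is exactly why the order-two method ($r=2$) realises the full gain $\eps^{3}\hh^{2}$ whereas the order-four method ($r=4$) saturates at $\eps^{4}\hh^{4}$.

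I expect this last cancellation step to be the main obstacle. One must carry the sharp $\eps$-powers uniformly across the $\mathcal{O}(1/\eps)$-long interval, control the resonance structure of both $e^{\hh M}$ and of the reconstruction phases $\cos\theta_n,\sin\theta_n$, and above all give a separate treatment of the non-oscillatory mean mode, which escapes the summation-by-parts cancellation and is the place where the naive bound is tight; it is the symmetry of S2O2 and S3O4 that should remove the otherwise leading mean defect and expose the gain, so keeping reversibility in play throughout is essential. Once this refined estimate is in hand, assembling the three contributions through the triangle inequality and translating back through the reconstruction is routine bookkeeping.
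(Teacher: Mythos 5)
Your overall skeleton---reduction to the fully discrete two-scale error by the triangle inequality, the defect recursion with the isometry $e^{\hh M}$, a Gronwall argument exploiting the $\eps\hh$ Lipschitz prefactor, and the $\sqrt{A}^{-1}$ versus $\eps^{-2}$ bookkeeping in the reconstruction---coincides with Step IV of the paper's proof. But two of your load-bearing claims are wrong, and they sit exactly where the content of the theorem lies. First, S3O4 does \emph{not} satisfy $\psi_4(\hh M)=0$ exactly: it is constructed by solving $\psi_i(\hh M)=0$ and $\psi_{\rho,i}(\hh M)=0$ only for $i,\rho\le 3$, and Proposition \ref{stiff thm} explicitly weakens the fourth-order condition to $\psi_4(0)=0$. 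Indeed, with $c_1=1,\ c_2=1/2,\ c_3=0$ and $\bar b_1=4\varphi_3-\varphi_2$, $\bar b_2=4\varphi_2-8\varphi_3$, $\bar b_3=\varphi_1-3\varphi_2+4\varphi_3$, one computes $\psi_4(z)=\varphi_4(z)-\tfrac12\varphi_3(z)+\tfrac{1}{12}\varphi_2(z)$, which vanishes at $z=0$ but is not identically zero. Hence the local defect of S3O4 contains the term $\eps\hh^4\psi_4(\hh M)\frac{\textmd{d}^{3}}{\textmd{d}\tilde t^{3}}\widetilde{\Gamma}=\mathcal{O}(\eps^{4}\hh^{4})$ \emph{per step}, and your termwise summation over the $T/(\eps\hh)$ steps yields only $\mathcal{O}(\eps^{3}\hh^{3})$, not $\eps^{4}\hh^{4}$. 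The cancellation you reserve for S2O2 is precisely what the paper applies to this S3O4 term: $\psi_4(0)=0$ gives the factorization $\psi_4(\hh M)=\hh M\,\tilde\psi_4(\hh M)$ with $\tilde\psi_4$ bounded, and Lemma 4.8 of \cite{Ostermann06} then bounds $\sum_{j} e^{(n-j-1)\hh M}\hh M\tilde\psi_4(\hh M)$ uniformly in $n$, restoring $\eps\hh^{4}\cdot\mathcal{O}(\eps^{3})=\mathcal{O}(\eps^{4}\hh^{4})$; note also that this factorization annihilates the mean mode (where $\hh M=0$), so the non-oscillatory mode you rightly worry about never escapes the estimate. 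Conversely, S2O2's improvement in the paper comes from the fact that $\psi_2(\hh M)=0$ holds \emph{exactly} (a stronger condition than required by Proposition \ref{stiff thm}), so the corresponding oscillatory sum simply never appears; your substitute---summation by parts against the kernel $e^{(n-j)\hh M}$ controlled by $1/|1-e^{-\mathrm{i}\hh\omega}|$---imports a numerical non-resonance hypothesis that Theorem \ref{UA thm} does not make (non-resonance enters only in Theorem \ref{Long-time thm}), so even if completed it would prove a weaker statement. In short, you have the roles of the two methods exactly swapped.

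Second, your first triangle-inequality piece, an $\mathcal{O}(\eps^{4})$ ``two-scale modelling error'' between $(q,p)$ of \eqref{new-system} and the diagonal trace of the two-scale solution, is both spurious and fatal to the claimed rates: it is independent of $\hh$, so it cannot be absorbed into $C(\eps^{4}\hh^{4}+\delta_{\mathcal{F}})$ as $\hh\to0$, contradicting the very bound you are trying to prove. In the two-scale framework there is no modelling error at all: once the initial data are compatible at $\tau=0$, the trace $X(\cdot,\tilde t,\tilde t/\eps)$ solves \eqref{new-system} \emph{exactly}, and this identity is how the paper converts the bound on $\widetilde{e_{\mathcal{M}}^{n}}$ into the $u$ and $v$ errors. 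The fourth-order preparation \eqref{inv} is not there to shrink a modelling error; its role is to secure the derivative bounds $\norm{\partial^{\mu}_{\tilde t}X}_{L^{\infty}_{\tau}(H^{\nu-\mu})}\le C\eps^{\mu}$, $\mu\le 4$, of Proposition \ref{TS M}, which are what put the powers of $\eps$ in front of the defects (via $\frac{\textmd{d}^{\rho}}{\textmd{d}\tilde t^{\rho}}\widetilde{\Gamma}=\mathcal{O}_{H^{\nu-\rho}}(\eps^{\rho})$) in the first place. With these two corrections---exact trace instead of an $\mathcal{O}(\eps^{4})$ discrepancy, and the $\psi_r(0)=0$ telescoping device applied to S3O4 rather than a non-resonance argument applied to S2O2---your outline would align with the paper's proof.
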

\begin{remark}
It is noted that these two methods have a very nice  accuracy  which is $\mathcal{O}_{H^{\nu-1}}( \hh^3)$ for S2O3 and $\mathcal{O}_{H^{\nu-3}}( \hh^4)$  for S3O4 in the approximation of  $u$. 
This high accuracy is very significant for the numerical methods applied to highly oscillatory systems where $\eps$ is a very small value.
\end{remark}

The next theorem requires a non-resonance condition and to describe it, we introduce the   notations (\cite{WZ21})
\begin{equation*}\begin{array}[c]{ll}
\mathbf{k}:=&\big(k_{-\frac{N_{\tau}}{2},1},k_{-\frac{N_{\tau}}{2}+1,1},\ldots,
k_{\frac{N_{\tau}}{2},1},k_{-\frac{N_{\tau}}{2},2},k_{-\frac{N_{\tau}}{2}+1,2},\ldots,
 k_{\frac{N_{\tau}}{2},2},\\&\qquad\ldots,k_{-\frac{N_{\tau}}{2},2\hat{d}},\ldots,
k_{\frac{N_{\tau}}{2},2\hat{d}}\big),\\
 \boldsymbol{\omega}:=&\big(\omega_{-\frac{N_{\tau}}{2},1},\omega_{-\frac{N_{\tau}}{2}+1,1},\ldots,
\omega_{\frac{N_{\tau}}{2},1},\omega_{-\frac{N_{\tau}}{2},2},\omega_{-\frac{N_{\tau}}{2}+1,2},\ldots,
\omega_{\frac{N_{\tau}}{2},2},\\&\qquad\ldots,\omega_{-\frac{N_{\tau}}{2},2\hat{d}},\ldots,
\omega_{\frac{N_{\tau}}{2},2\hat{d}}\big),\\
|\mathbf{k}|:=&\sum_{l=1}^{2\hat{d}}\sum_{j=-\frac{N_{\tau}}{2}}^{\frac{N_{\tau}}{2}}
|k_{j,l}|,\ \ \ \mathbf{k}\cdot \boldsymbol{\omega}:=\sum_{l=1}^{2\hat{d}}\sum_{j=-\frac{N_{\tau}}{2}}^{\frac{N_{\tau}}{2}}
k_{j,l}\omega_{j,l},
\end{array}
\end{equation*}
where
$\big(\omega_{-\frac{N_{\tau}}{2},l},\omega_{-\frac{N_{\tau}}{2}+1,l},\ldots,
\omega_{\frac{N_{\tau}}{2},l}\big):=\frac{1}{\eps^2}\big(\frac{N_{\tau}}{2},
  \frac{N_{\tau}}{2}-1,\ldots,-\frac{N_{\tau}}{2}\big)$ for any $1\leq l\leq 2\hat{d}$.
Denote the resonance module by $\mathcal{M}:= \{\mathbf{k}\in \mathcal{Q}:\ \mathbf{k}\cdot
\boldsymbol{\omega}=0\},$  where $\mathcal{Q}:= \{\mathbf{k}\in \mathbb{Z}^{D}:  \textmd{there exists an}\ l\in \{1,\ldots,2\hat{d}\}\
 \textmd{such that}\
|\mathbf{k}_{:,l}|=|\mathbf{k}|\}$.
Let  $\langle j\rangle_l$ be the unit coordinate vector $(0, \ldots , 0, 1, 0,
\ldots,0)^{\intercal}\in \mathbb{R}^{D}$  with the only entry $1$ at the $(j,l)$-th
position. Further let $\mathcal{K}$ be  a set of representatives of the
equivalence classes in $\mathcal{Q}/\mathcal{M}$. The set $\mathcal{K}$ is determined by two requirements.
The first is that if $\mathbf{k}\in\mathcal{K}$, we have $-\mathbf{k}\in\mathcal{K}.$ The other  is to minimize  the sum $|\mathbf{k}|$ in the equivalence class $[\mathbf{k}] = \mathbf{k} +\mathcal{M}$  for each $\mathbf{k}\in\mathcal{K}$.
Meanwhile,  for those elements  having the same minimal sum $|\mathbf{k}|$,  all of them are kept in $\mathcal{K}$.
Denote $\mathcal{N}_N =\{\mathbf{k}\in\mathcal{K}:\ |\mathbf{k}|\leq N\}$ and $
\mathcal{N}_N^*=\mathcal{N}_N \bigcup
\{\langle 0\rangle_l\}_{l=1,2,\ldots,2\hat{d}}$
for a positive
integer $N$.
\begin{theorem} \label{Long-time thm} (\textbf{Long time energy {near conservation}}) Denote the initial value appeared in
Definition \ref{defFDS} by $\delta_0:=\norm{\widehat{Z^{0}}}_{H^{\nu}} $ which is a value between 0 and 1\footnote{If not,  it can   be achieved by rescaling the solution of the considered system.}. The non-linearity $f(u)$ is assumed to be   smooth and satisfy $f(0)=f'(0)=0.$ For  the time stepsize $\hh$, we
assume a lower bound   $\hh/\sqrt{\eps} \geq c_0 > 0$ and $\hh\leq \delta_0$\footnote{It is noted that this restriction is an artificial  condition for rigorous proof and is hoped to be weakened in future.}.
It is further
required that the  numerical non-resonance condition $
|\sin\big(\frac{1}{2}\hh \omega_{j,1}\big)| \geq c_1 \sqrt{\hh}
$ holds
for a constant $c_1>0$ and  $j=-\frac{N_{\tau}}{2},-\frac{N_{\tau}}{2}+1,\ldots,\frac{N_{\tau}}{2}$.
Then  the long time energy
conservation of $u^n,\ v^n$  produced
by Definition \ref{defFDS} with S2O3 or S3O4 is  estimated by
\begin{equation}\label{er}
\begin{aligned}
& {\frac{\eps^2}{\delta_0^2} \abs{H(u^n,v^n)- H(u^0,v^0)}\leq C\eps^{3}\delta_0^2+C\delta_{\mathcal{F}},\ \ 0\leq n\hh\leq \frac{\eps \delta_0^{-N+3}}{\hh^2}},\\
\end{aligned}
\end{equation}
{where the constant $C$  depends on $N, N_{\tau},c_0, c_1$
but is independent of $n,\ h,\ \eps $, and $\delta_{\mathcal{F}}$ denotes   the error brought by the Fourier pseudospectral method.    Since  $N$ can be arbitrarily large and   $ \eps /h^2<1/c_0^2$,  the near conservation law holds  for a long time.}
\end{theorem}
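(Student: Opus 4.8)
The plan is to establish the result via a modulated Fourier expansion (MFE) of the fully discrete solution, following the framework of \cite{Cohen0,Lubich2006,hairer2000} but extended to the multi-stage exponential integrators \eqref{cs ei-ful2}. First I would seek an expansion of the numerical approximation together with its internal stage values in the form
\begin{equation*}
\widehat{Z^n}\approx\sum_{|\kk|\leq N}y_{\kk}(\tilde{t}_n)\,\fe^{\ii(\kk\cdot\ww)\tilde{t}_n},\qquad \widehat{Z^{ni}}\approx\sum_{|\kk|\leq N}y_{\kk}^{i}(\tilde{t}_n)\,\fe^{\ii(\kk\cdot\ww)\tilde{t}_n},
\end{equation*}
where the frequencies $\ww$ are those introduced before the theorem and the modulation functions $y_{\kk},y_{\kk}^{i}$ are smooth in the slow time. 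Substituting this ansatz into the stage and update relations \eqref{cs ei-ful2}, Taylor-expanding the nonlinearity $\Gamma$ about the mean mode, and comparing coefficients of each $\fe^{\ii(\kk\cdot\ww)\tilde{t}}$ would produce the modulation system: a coupled family of differential (and, through the stages, algebraic) equations for the $y_{\kk}$. Since the diagonal propagators $\fe^{c_i\hh M}$ and $\fe^{\hh M}$ act on the $j$-th mode as multiplication by $\fe^{\ii c_i\hh\omega_{j,1}}$ and $\fe^{\ii\hh\omega_{j,1}}$, the relation for a given $\kk$ is governed by the factor $\fe^{\ii\hh(\kk\cdot\ww)}-1$, whose modulus is bounded below precisely by the numerical non-resonance condition $\abs{\sin(\tfrac12\hh\omega_{j,1})}\geq c_1\sqrt{\eps}$.

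The next step is to bound the modulation functions. Using $f(0)=f'(0)=0$ and the scaling $\delta_0$, the mean mode satisfies $\norm{y_0}_{H^\nu}=\mathcal{O}(\delta_0)$ while every nonzero mode is smaller by a power $\delta_0^{|\kk|}$, up to the $\eps$ in front of $\Gamma$ and the inverted non-resonance denominators; the lower bound $\hh/\sqrt{\eps}\geq c_0$ together with the non-resonance condition keeps these small divisors under control. A discrete Gronwall estimate on the defect between the genuine iterates and the truncated expansion would then show that the MFE represents $\widehat{Z^n}$ up to an error of size $\delta_0^{N+1}$ (plus the spectral error $\delta_{\mathcal{F}}$) on the whole window $0\leq n\hh\leq\eps^2\delta_0^{-N+3}/\hh$, with the a priori bounds of Proposition \ref{TS M} guaranteeing that the underlying exact two-scale solution stays in the region where the expansion is valid.

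The heart of the argument is the construction of an almost-invariant of the modulation system. Here the symmetry characterized in Proposition \ref{symmetric thm} is indispensable: it forces the modulation equations into a structure admitting a formal energy $\mathcal{H}[y](\tilde{t})$ --- built from a frequency-weighted quadratic form in the $y_{\kk}$ together with a potential term inherited from $f$ --- whose change over one step is only of size $\hh\,\delta_0^{N+1}$. I would derive this invariance by testing the modulation equations against suitable combinations of the $y_{\kk}$ and invoking the symmetry relations \eqref{erkdc} among the $\bar{a}_{i\rho},\bar{b}_{\rho}$ to cancel the leading non-conservative contributions. I would then show that $\mathcal{H}[y]$ evaluated at the numerical data reproduces the physical Hamiltonian \eqref{hhh} up to $\mathcal{O}(\eps^2\delta_0^2)$, transporting the result back from $(U,V)$ to $(u,v)$ through the transformations of Step 1, and finally sum the per-step defects over the stated time window to obtain \eqref{er}.

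The main obstacle will be the identification and near-conservation of this invariant for a genuinely multi-stage scheme. Because the stage modulation functions $y_{\kk}^{i}$ enter the update nonlinearly, one must verify that the symmetry conditions \eqref{erkdc} produce the required telescoping at every order up to $\delta_0^{N}$, and that the large-initial-data factors $\delta_0^{|\kk|}$ do not overwhelm the estimates. This is exactly the difficulty flagged in the introduction following \cite{Cohen0} as having previously obstructed long-time analysis of any multi-stage method, and overcoming it for S2O2 and S3O4 is the crux of the proof.
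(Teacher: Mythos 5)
Your overall architecture (modulated Fourier expansion in the frequencies $\ww$, bounds on the modulation functions in powers of $\delta_0$, an almost-invariant with per-step drift $\mathcal{O}(\hh\eps^2\delta_0^{N+1})$, then reconstruction of the Hamiltonian and summation over the time window) matches the paper's. But there is a genuine gap at exactly the point you yourself flag as the crux: the treatment of the internal stages. You introduce separate modulation functions $y^{i}_{\kk}$ for the stage values $\widehat{Z^{ni}}$ and propose to extract the almost-invariant from the resulting coupled differential--algebraic modulation system by ``testing against suitable combinations'' and invoking the symmetry relations \eqref{erkdc}. That is precisely the step that, as noted in the introduction following \cite{Cohen0}, has obstructed all previous long-time analyses of multi-stage methods, and your proposal supplies no mechanism for carrying it out: with independent stage functions the modulation system loses the structure from which the invariant is read off, and the symmetry conditions alone do not produce the required telescoping at all orders in $\delta_0$.

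The paper resolves this by a different device, and this is the idea missing from your plan: it never introduces stage modulation functions at all. Using the stiff local-error analysis of Section \ref{sec:4}, the stages of S2O2 are written as $\widehat{Z^{ni}}=\Phi(\tilde{t}+c_i\hh)+C\eps^{3}\hh^{2}\mathcal{D}^{2}\Phi(\tilde{t}+c_i\hh)$, i.e.\ the \emph{same} expansion $\Phi$ evaluated at shifted times plus a controlled correction (with $C\eps^{5}\hh^{4}\mathcal{D}^{4}\Phi$ for S3O4). The stages are then eliminated, collapsing the whole multi-stage scheme into the single operator equation $\mathcal{L}(\hh\mathcal{D})\Phi(\tilde{t})=\hh\eps\,\Gamma(\Phi(\tilde{t}))$ with $\mathcal{L}(\hh\mathcal{D})=\big(e^{\hh\mathcal{D}}-e^{\hh M}\big)\big(\sum_{\rho}\bar{b}_{\rho}(\hh M)(e^{c_{\rho}\hh\mathcal{D}}+C\eps^{3}\hh^{2}\mathcal{D}^{2})\big)^{-1}$, to which the one-stage machinery of \cite{WZ21} (ansatz \eqref{ansatz}, coefficient bounds, almost-invariant) applies essentially verbatim; the symmetry of the scheme enters through the expansion of this single operator. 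Without this collapse your defect/Gronwall estimates can still be made, but the almost-invariant step cannot. A secondary quantitative issue: you claim the invariant reproduces the Hamiltonian only up to $\mathcal{O}(\eps^{2}\delta_0^{2})$, whereas the paper establishes $\eps^{2}\mathcal{H}(\tilde{t}_n)=\eps^{2}H(u^{n},v^{n})+\mathcal{O}_{abs}(\eps^{4}\delta_0^{4})+\mathcal{O}_{abs}(\delta_{\mathcal{F}})$; with your weaker accuracy the final estimate would degrade to $\mathcal{O}_{abs}(\eps^{4})$ rather than the stated $\mathcal{O}_{abs}(\eps^{4}\delta_0^{2})$ after division by $\delta_0^{2}$.
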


\begin{remark}
The above statement \eqref{er} seems a little surprised since these two methods (with different order) have the same result of energy {near conservation}. It is noted here that this fact comes from  the  same  boundedness of the coefficient functions appeared in the modulated Fourier expansion.  Moreover, the numerical results of the  test given in Section \ref{se;NT}   demonstrate and support this behaviour  (see Figures \ref{p7}-\ref{p8}). \end{remark}

\begin{remark}
Although the smallness of the parameter $\delta_0$ is technically
required,   we should note here that $\delta_0$  is totally
independent of $\eps$. This means that  the initial value of
\eqref{klein-gordon} is large, which has not been considered
yet in the long term analysis of any methods.  In all the previous
work on this topic,  small initial data is required (see, e.g.
\cite{Cohen0,Cohen2,Lubich2006,Lubich2,hairer2000,lubich19,WW,WZ21}).
Moreover, we only need the  lower bound on the time stepsize $\hh \geq
c_0 \sqrt{\eps}$, which means that large {stepsize} can
be used to keep the long time energy conservation. Compared with the analysis of \cite{WZ21}, a looser  numerical non-resonance requirement is posed in this theorem and this is  because the  methods derived in this paper avoid $\sin\big(\frac{1}{2}\hh (\omega_{j,1}- (\mathbf{k}\cdot \boldsymbol{\omega}))\big)$ with $\mathbf{k}\in \mathcal{N}_N$ but $\neq\langle j \rangle_l$ in the denominator of the ansatz \eqref{ansatz} of the modulated
Fourier functions.
\end{remark}

\begin{remark}
It is noted that we only focus on the energy conservation in this paper. For the  nonlinear relativistic Klein--Gordon  equation, it has other (almost) invariants such as momentum and  harmonic actions. With the same arguments presented in this paper, the proposed methods can be proved to have long time {near conservations} in these invariants. This paper will not go further on this aspect for  simplicity.
\end{remark}

\subsection{Numerical test}\label{se;NT}
\subsubsection{1D test} 
As a numerical example, we consider
the
 nonlinear relativistic Klein--Gordon equation \eqref{klein-gordon} with $\lambda=-1,d=1,\Omega_x=(-\pi,\pi)$
 and the initial values $$\psi_1(x)=\frac{3\sin(x)}{\exp(x^2/2)+\exp(-x^2/2)},\  \  \psi_2(x)=\frac{2\exp(-x^2)}{\sqrt{\pi}}.$$
  By the {Fourier} spectral collocation method, we consider the
second-order Fourier differentiation matrix
$(a_{kj})_{\tilde{M}\times \tilde{M}}$ whose   entries  are given by
$$a_{kj}=\left\{
             \begin{array}{ll}
             \frac{(-1)^{k+j}}{2}\sin^{-2}\Big(\frac{(k-j)\pi}{\tilde{M}}\Big),&\quad k\neq j,\\
             \\
             \frac{M^2}{12}+\frac{1}{6},&\quad k=j,
             \end{array}
       \right.$$
with $\tilde{M}=\frac{2\pi}{N_x}$.
In this test,
we consider $N_x=32$ and $N_\tau=64$.  For implicit methods, we use standard fixed point iteration as nonlinear solver in the practical computations. We
set $10^{-12}$ as the error tolerance and $200$ as the maximum number of each iteration.
   \begin{figure}[t!]
\centering
\includegraphics[width=5.2cm,height=5.1cm]{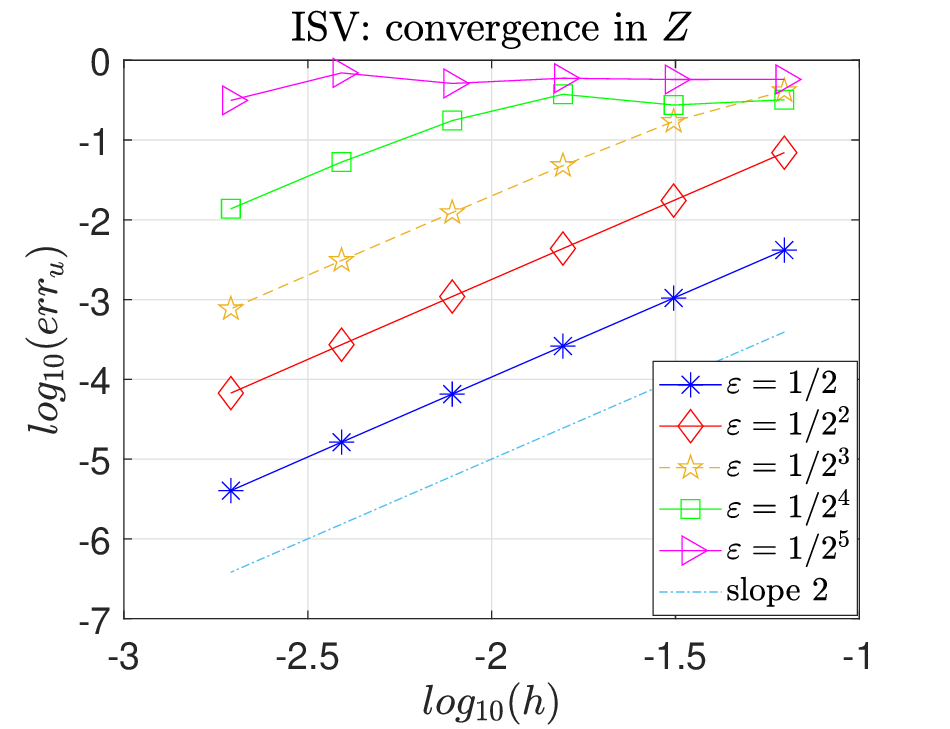}
\includegraphics[width=5.2cm,height=5.1cm]{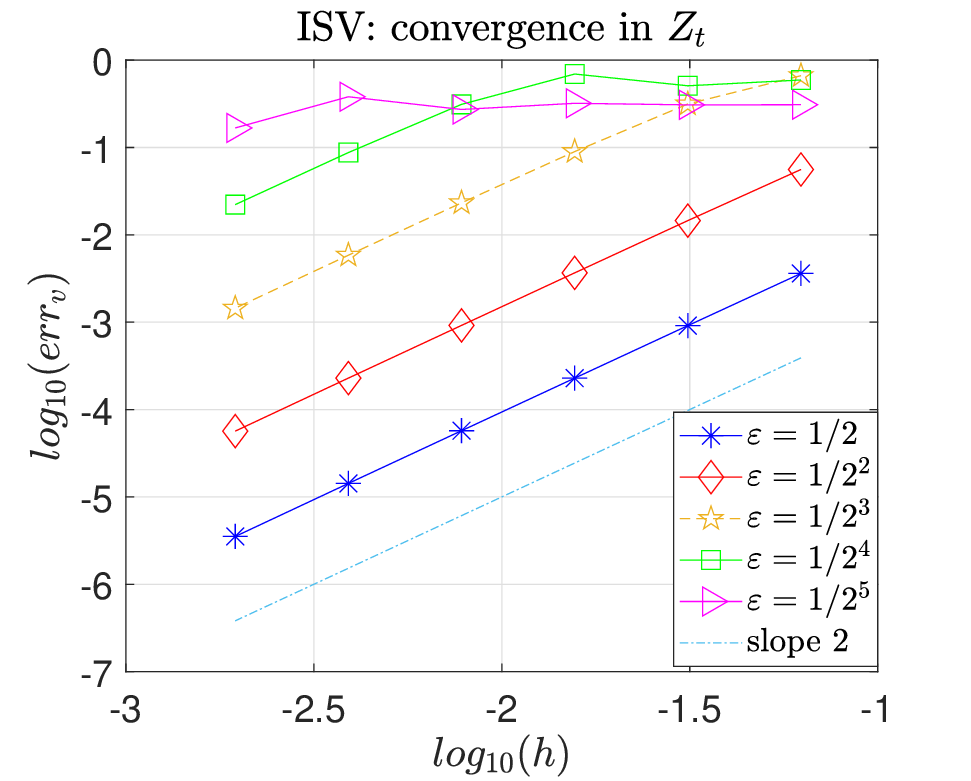}
\caption{ISV: the log-log plot of the temporal errors $err_u=\frac{\norm{u^n-u(\cdot, t _n)}_{H^{1}}}{\norm{u(\cdot, t _n)}_{H^{1}}}$ and $err_v=\frac{\norm{v^n-v( \cdot,t _n)}_{H^{0}}}{\norm{v( \cdot,t _n)}_{H^{0}}}$  at $ t _n=1$ under different $\hh$, where $\eps=1/2^k$ with $k=1,2,\ldots,5$.} \label{p1}
\end{figure}

   \begin{figure}[t!]
\centering
\includegraphics[width=5.2cm,height=5.1cm]{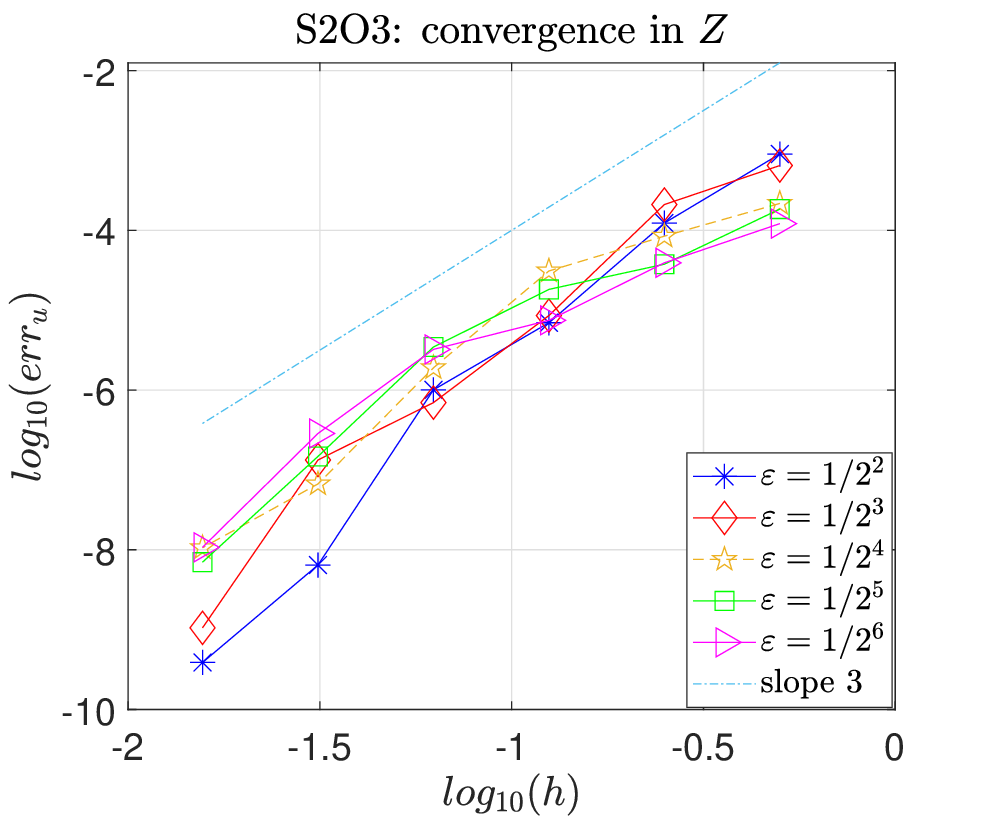}
\includegraphics[width=5.2cm,height=5.1cm]{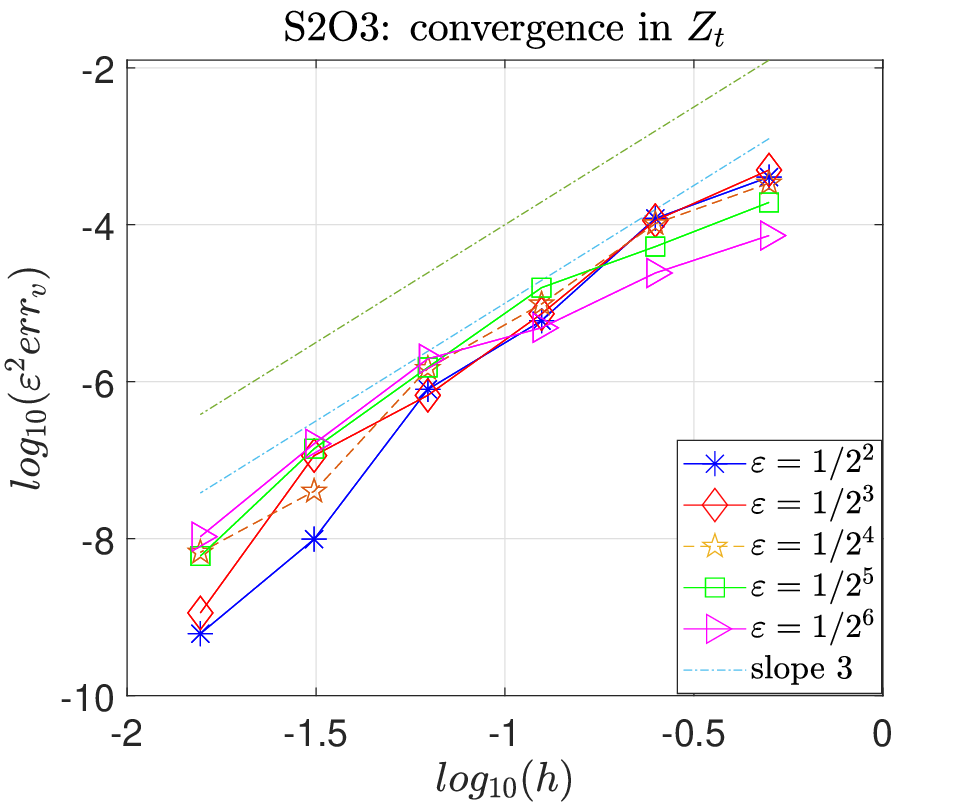}
\caption{S2O3: the log-log plot of the temporal errors $err_u=\frac{\norm{u^n-u(\cdot, t _n)}_{H^{1}}}{\norm{u(\cdot, t _n)}_{H^{1}}}$ and $err_v=\frac{\norm{v^n-v( \cdot,t _n)}_{H^{0}}}{\norm{v( \cdot,t _n)}_{H^{0}}}$  at $ t _n=1$ under different $\hh$, where $\eps=1/2^k$ with $k=1,2,\ldots,5$.} \label{p2}
\end{figure}

   \begin{figure}[t!]
\centering
\includegraphics[width=5.2cm,height=5.1cm]{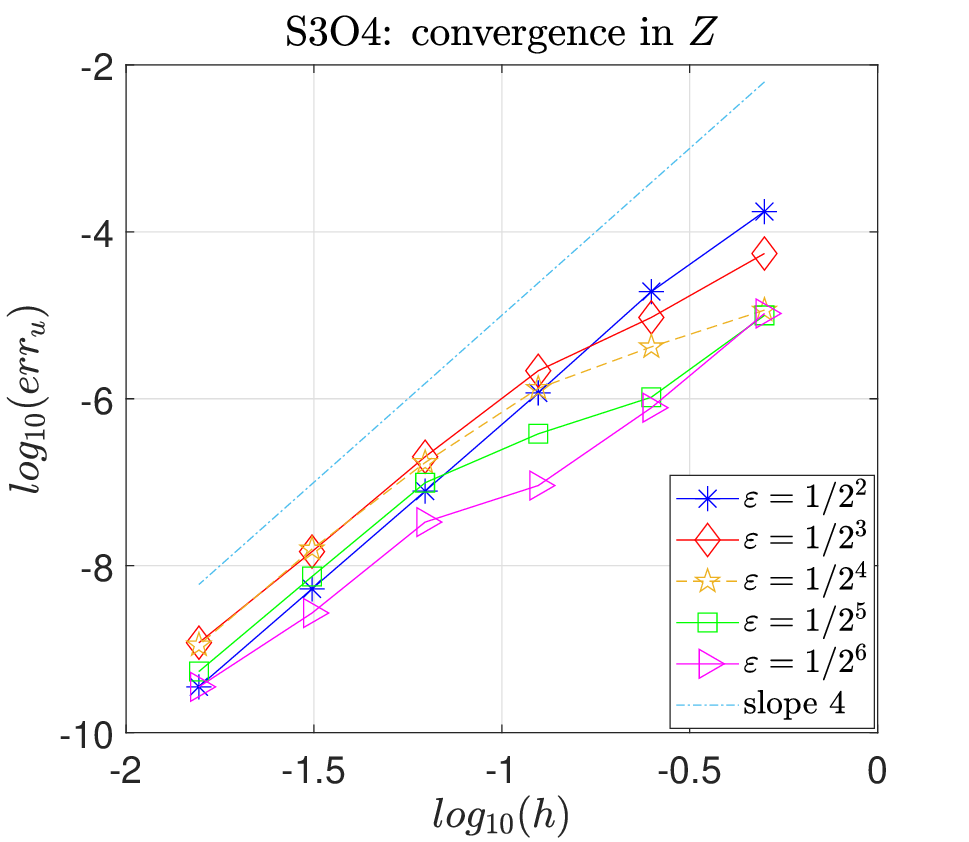}
\includegraphics[width=5.2cm,height=5.1cm]{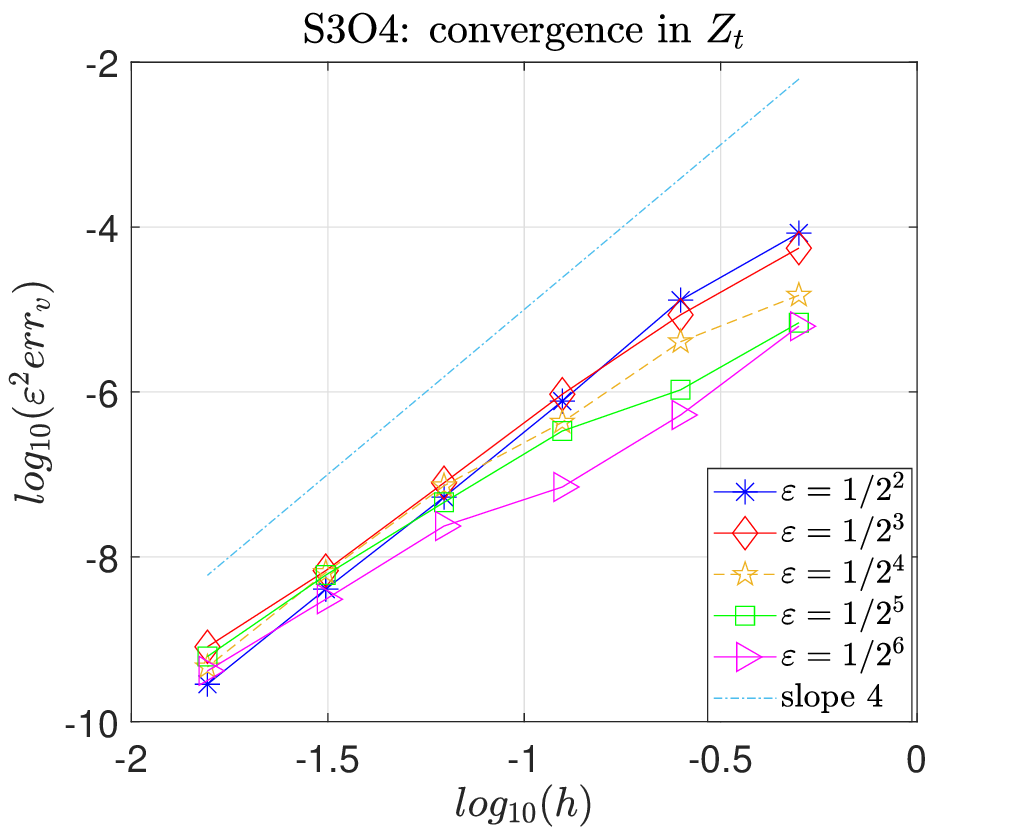}
\caption{S3O4: the log-log plot of the temporal errors $err_u=\frac{\norm{u^n-u(\cdot, t _n)}_{H^{1}}}{\norm{u(\cdot, t _n)}_{H^{1}}}$ and $err_v=\frac{\norm{v^n-v( \cdot,t _n)}_{H^{0}}}{\norm{v( \cdot,t _n)}_{H^{0}}}$  at $ t _n=1$ under different $\hh$, where $\eps=1/2^k$ with $k=1,2,\ldots,5$.} \label{p3}
\end{figure}

\textbf{Accuracy.}  For comparison, we choose  the second-order improved St\"{o}rmer-Verlet method (\textbf{ISV})  given in \cite {hairer2000}. This method is directly used without taking the process given in Section \ref{sec:2}.
For the methods presented in this paper, {they are used to solve the two-scale system  \eqref{2scale compact} over $[0,T]$.}
  Firstly  the accuracy of all the methods is shown by displaying the global errors
 $$err_u=\frac{\norm{u^n-u(\cdot, t _n)}_{H^{1}}}{\norm{u(\cdot, t _n)}_{H^{1}}},\ err_v=\frac{\norm{v^n-v( \cdot,t _n)}_{H^{0}}}{\norm{v( \cdot,t _n)}_{H^{0}}}$$   at $T=1$ in Figures \ref{p1}-\ref{p6}.
We use the result given by the S3O4 with a small time stepsize as the reference solution. In the light of these results, we have the following observations.

a) The  improved St\"{o}rmer-Verlet method ISV shows non-uniform accuracy.  When $\eps$ becomes small, the accuracy becomes badly (see Figure \ref{p1}).

b) The two integrators given in this paper have   uniform   accuracy,  and when $h$ decreases, the accuracy is improved.  S2O3 shows third order and S3O4 performs   fourth order (see Figures \ref{p2}-\ref{p3}).

c) Figures \ref{p4}-\ref{p6}  show the dependence of  global errors in $\eps$. It can be seen that for a fixed time stepsize,
ISV behaves very badly for small $\eps$ (see Figure \ref{p4}), and
S2O3 and  S3O4 have uniform  convergence in $u$ and $\eps^2 v$  (see Figures \ref{p5} and \ref{p6}, respectively).
These observations agree with the  theoretical results given in Theorem \ref{UA thm}.

   \begin{figure}[t!]
\centering
\includegraphics[width=5.2cm,height=5.1cm]{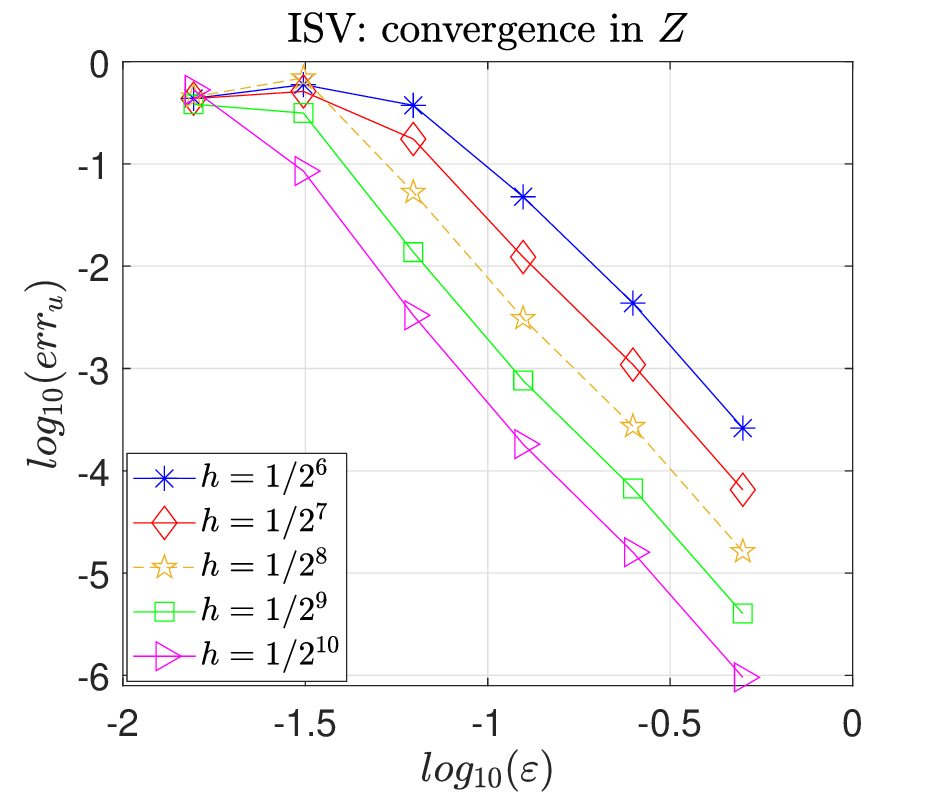}
\includegraphics[width=5.2cm,height=5.1cm]{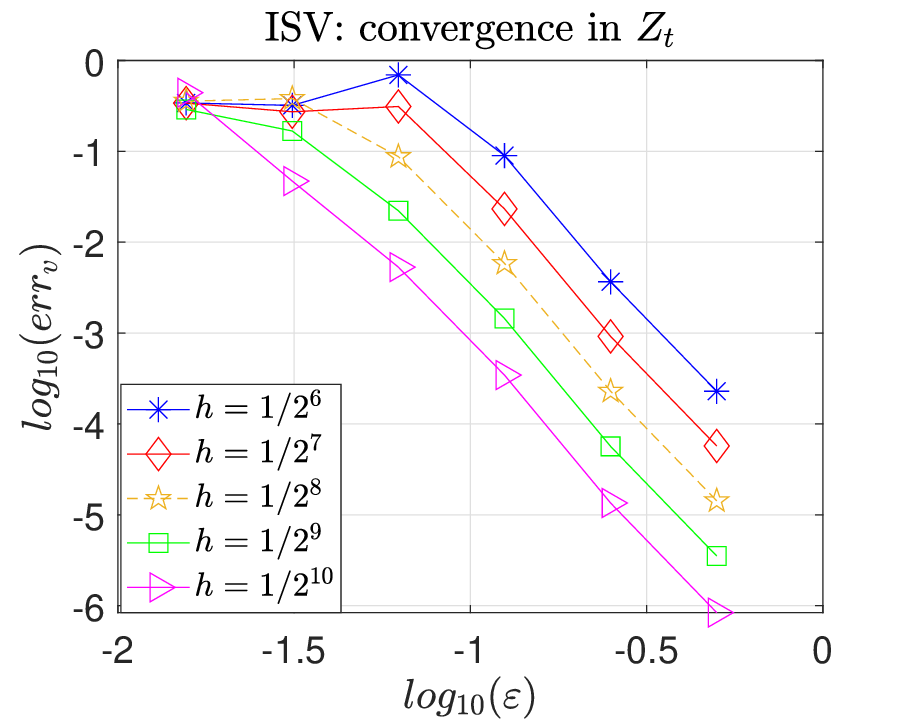}
\caption{ISV: the log-log plot of the temporal errors $err_u=\frac{\norm{u^n-u(\cdot, t _n)}_{H^{1}}}{\norm{u(\cdot, t _n)}_{H^{1}}}$ and $err_v=\frac{\norm{v^n-v( \cdot,t _n)}_{H^{0}}}{\norm{v( \cdot,t _n)}_{H^{0}}}$  at $ t _n=1$ under different $\eps$, where $\hh=1/2^k$ with $k=6,7,\ldots,10$.} \label{p4}
\end{figure}

   \begin{figure}[t!]
\centering
\includegraphics[width=5.2cm,height=5.1cm]{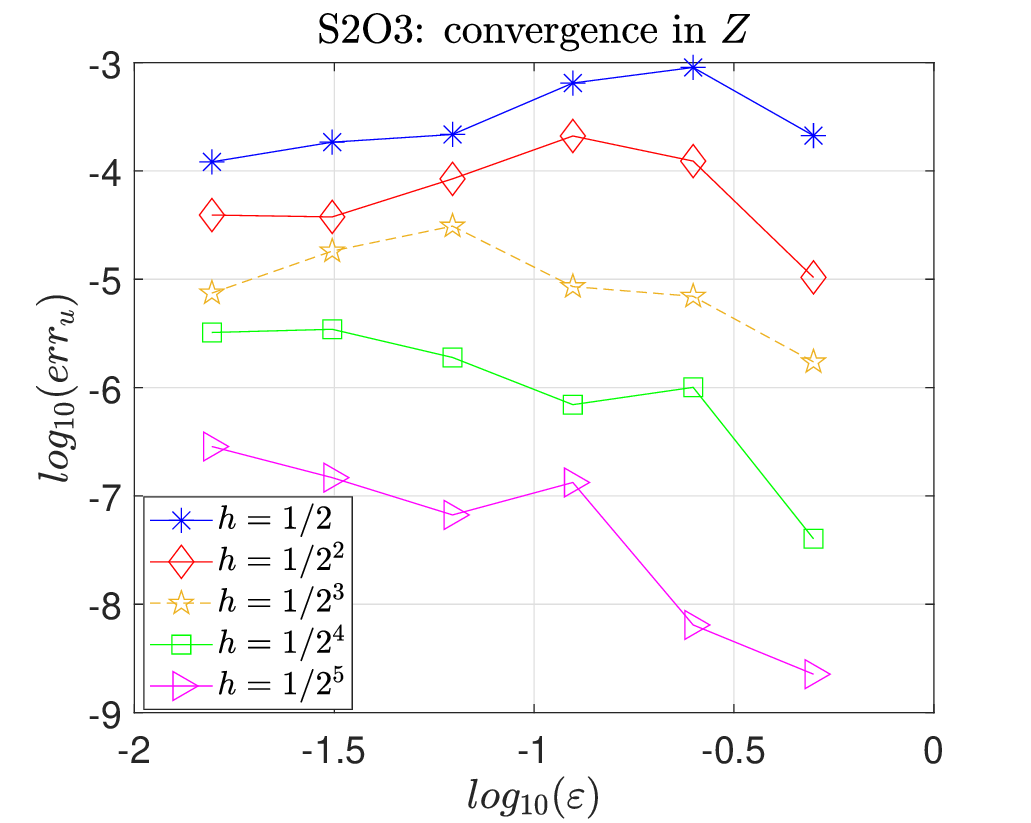}
\includegraphics[width=5.2cm,height=5.1cm]{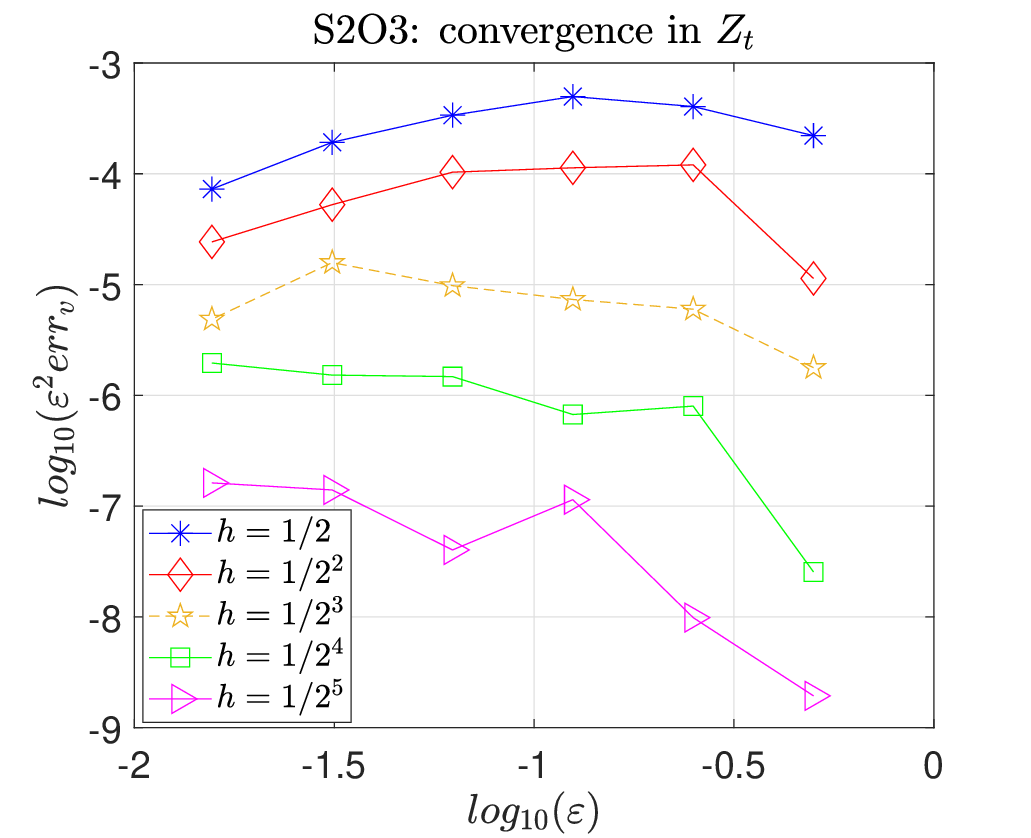}
\caption{S2O3: the log-log plot of the temporal errors $err_u=\frac{\norm{u^n-u(\cdot, t _n)}_{H^{1}}}{\norm{u(\cdot, t _n)}_{H^{1}}}$ and $err_v=\frac{\norm{v^n-v( \cdot,t _n)}_{H^{0}}}{\norm{v( \cdot,t _n)}_{H^{0}}}$  at $ t _n=1$ under different $\eps$, where $\hh=1/2^k$ with $k=6,7,\ldots,10$.} \label{p5}
\end{figure}

   \begin{figure}[t!]
\centering
\includegraphics[width=5.2cm,height=5.1cm]{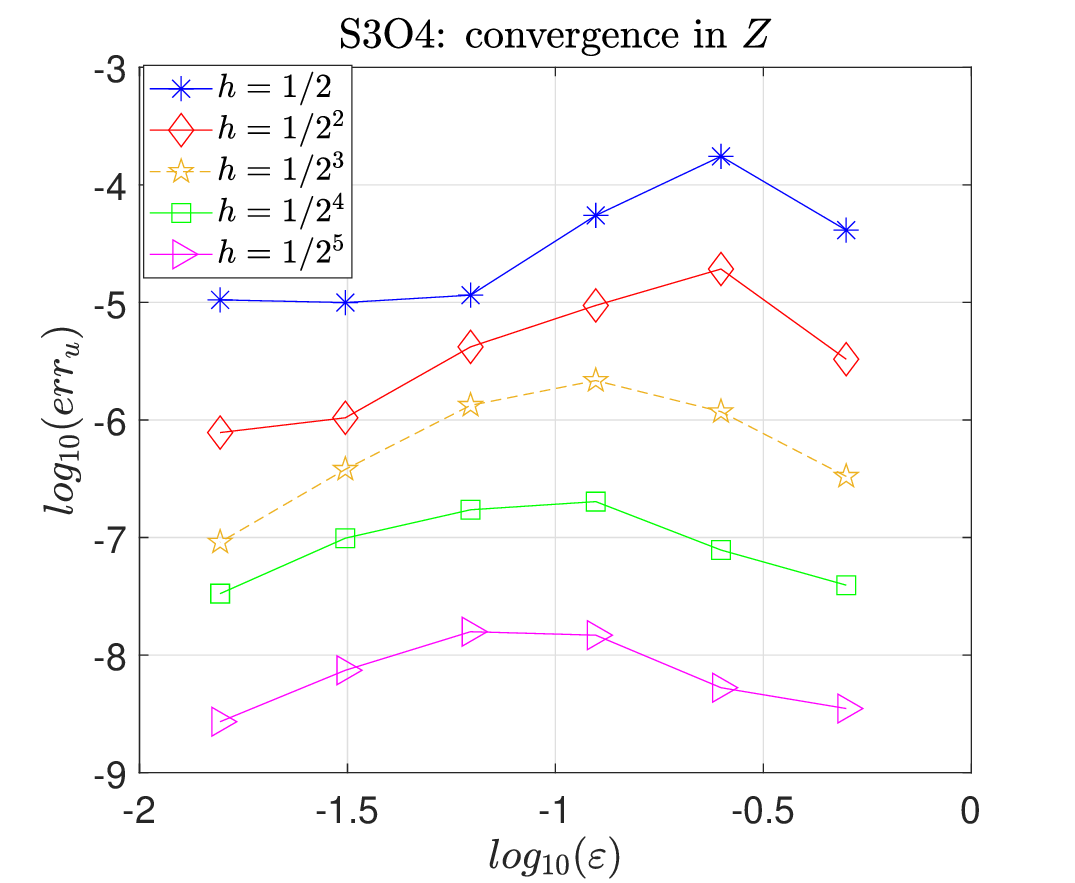}
\includegraphics[width=5.2cm,height=5.1cm]{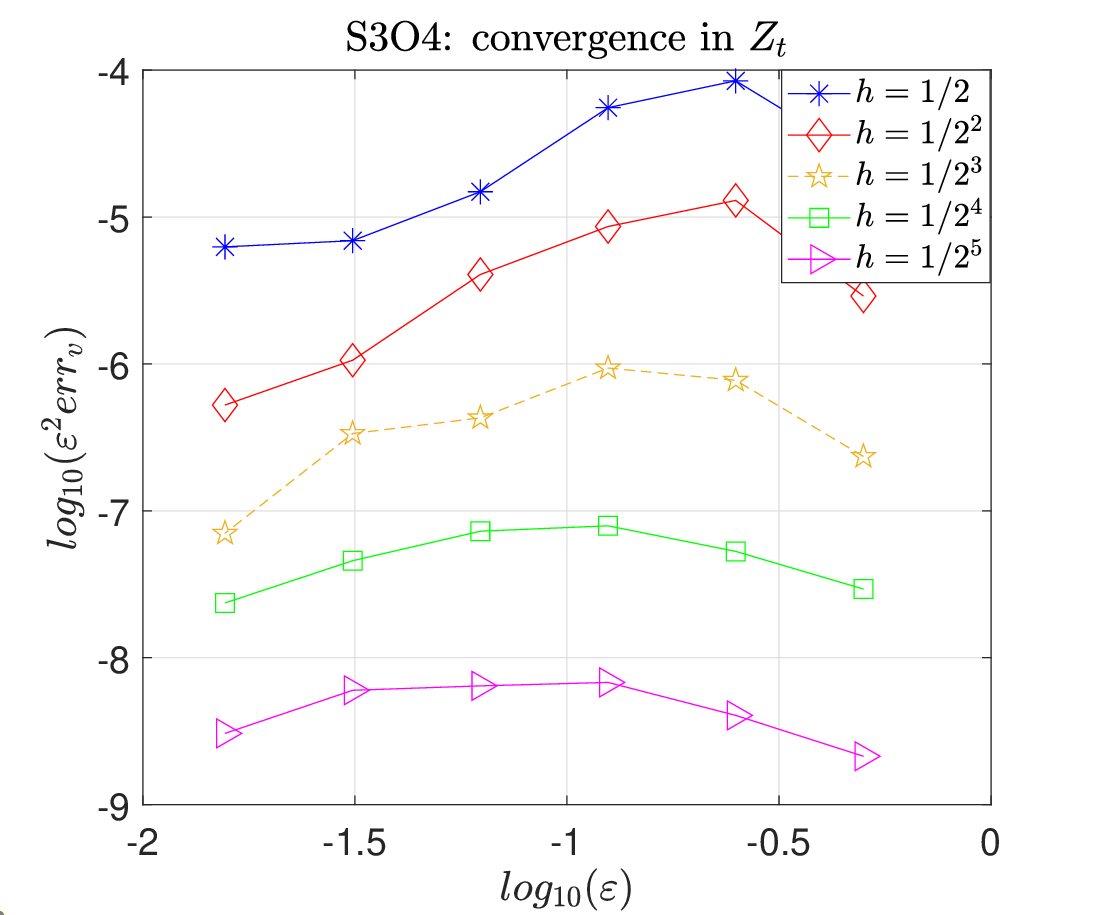}
\caption{S3O4: the log-log plot of the temporal errors $err_u=\frac{\norm{u^n-u(\cdot, t _n)}_{H^{1}}}{\norm{u(\cdot, t _n)}_{H^{1}}}$ and $err_v=\frac{\norm{v^n-v( \cdot,t _n)}_{H^{0}}}{\norm{v( \cdot,t _n)}_{H^{0}}}$  at $ t _n=1$ under different $\eps$, where $\hh=1/2^k$ with $k=6,7,\ldots,10$.} \label{p6}
\end{figure}

 \textbf{Energy conservation.}  Then  we display the long time energy conservation of our methods by presenting the energy error
$err_H=\frac{|H(Z^n,Z_t^n)-H(Z^0,Z_t^0)|}{|H(Z^0,Z_t^0)|}.$   For comparison,  we replace S2O3 with the following one-stage  exponential integrator:{
\begin{equation*} \begin{split}
&\widehat{Z^{n1}}=e^{ \hh M/2}\widehat{Z^{n}}+ h/2\Gamma \big(t_n+h/2,\widehat{Z^{n1}}\big),\\
&\widehat{Z^{n+1}}=e^{\hh M}\widehat{Z^{n}}+  h\varphi_{1}(\hh M)\Gamma \big(t_n+h/2,\widehat{Z^{n1}}\big).
\end{split}\end{equation*} }This method  is non-symmetric and we shall denote it by \textbf{NSM}.
The energy  errors
 are shown in Figures \ref{p7}-\ref{p9} for  different $\eps$ and large   $\hh$.
According to these numerical results, the following observations are made.

a) The energy $H$ is nearly preserved numerically by   our
integrators   over long times. S2O3 and S3O4  have an nice and similar long time conservation, 
and with large time {stepsize} $\hh$, the numerical error in
the energy can be improved when $\eps$ decreases (see Figures \ref{p7}-\ref{p8}).
There observations agree with the results given in Theorem \ref{Long-time thm}.

b) In contrast, NSM shows substantial drift in the energy quantity and thus it  does not have long-term performance in the energy conservation (see Figure  \ref{p9}). The reason is that it is not a symmetric method.
This observation demonstrates that  symmetric conditions   play an important role in the numerical behaviour of energy conservation.

   \begin{figure}[t!]
\centering
\includegraphics[width=5.5cm,height=4.8cm]{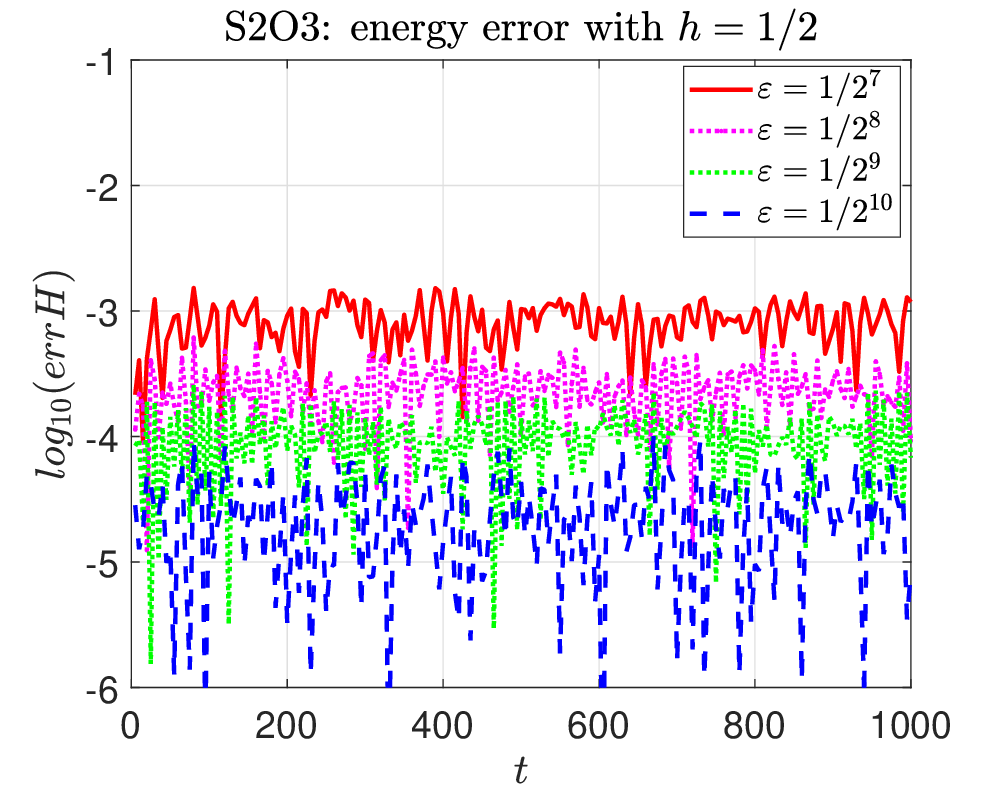}
\includegraphics[width=5.5cm,height=4.8cm]{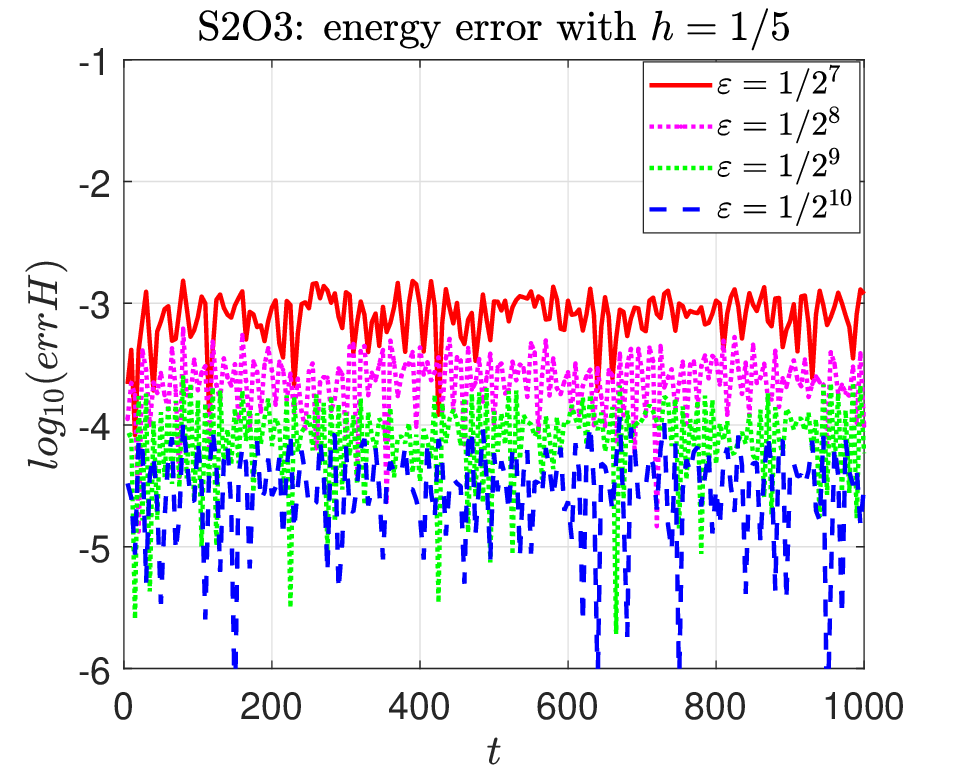}
\caption{S2O3: the plot of the energy error $err_H=\frac{|H(u^n,v^n)-H(u^0,v^0)|}{|H(u^0,v^0)|}$    against $t$.} \label{p7}
\end{figure}

   \begin{figure}[t!]
\centering
\includegraphics[width=5.5cm,height=4.8cm]{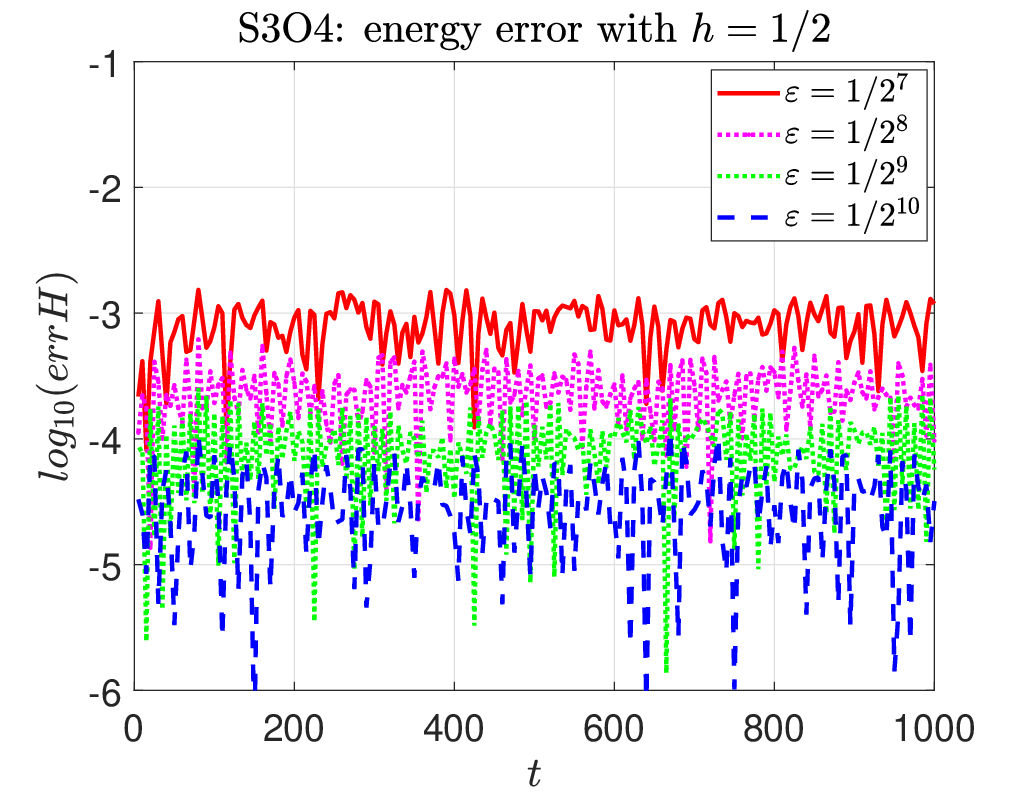}
\includegraphics[width=5.5cm,height=4.8cm]{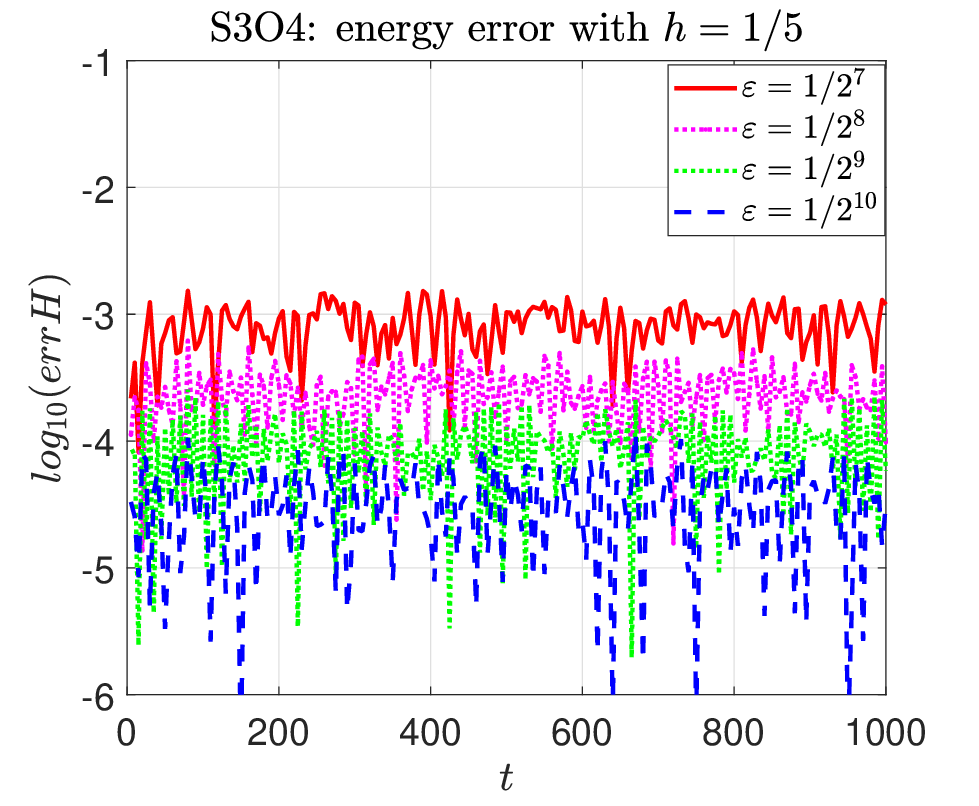}
\caption{S3O4: the plot of the energy error $err_H=\frac{|H(u^n,v^n)-H(u^0,v^0)|}{|H(u^0,v^0)|}$     against $t$.} \label{p8}
\end{figure}

   \begin{figure}[t!]
\centering
\includegraphics[width=5.5cm,height=4.8cm]{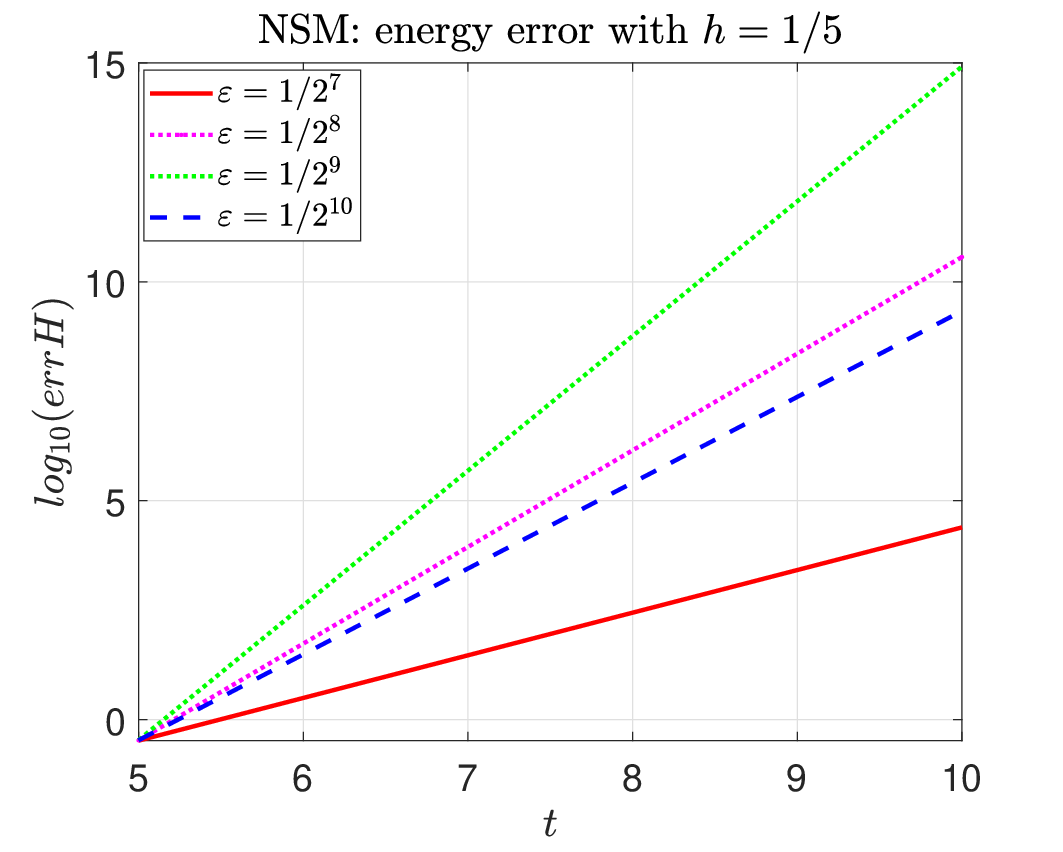}
\includegraphics[width=5.5cm,height=4.8cm]{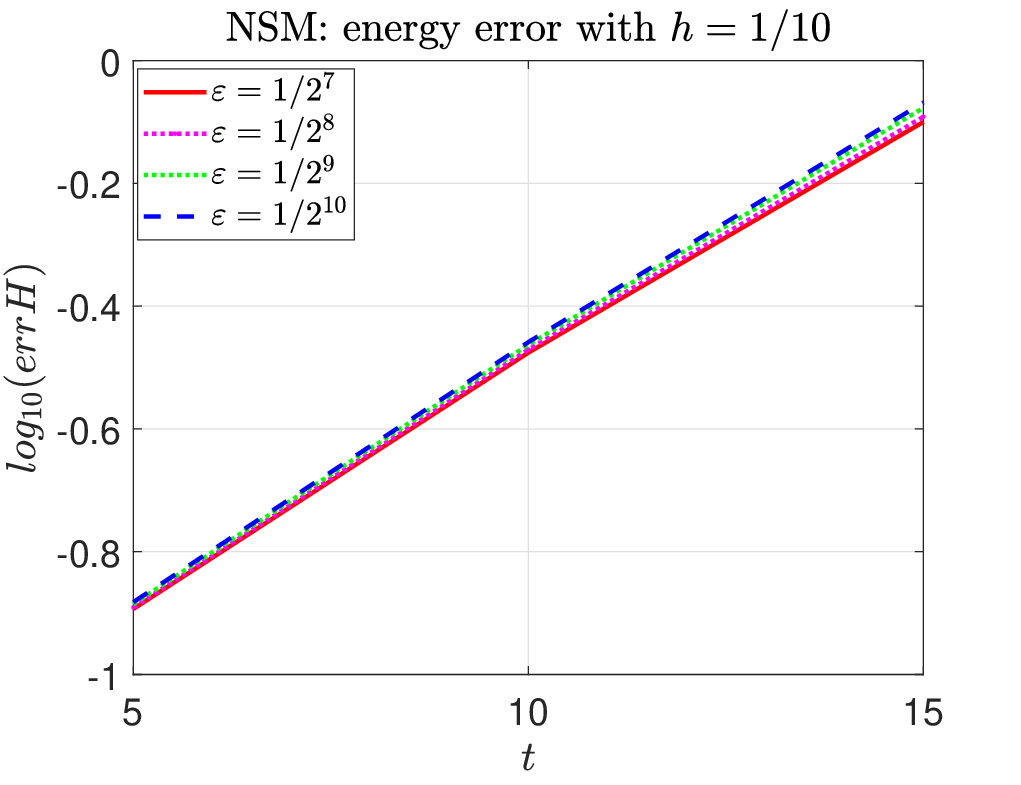}
\caption{NSM: the plot of the energy error $err_H=\frac{|H(u^n,v^n)-H(u^0,v^0)|}{|H(u^0,v^0)|}$     against $t$.} \label{p9}
\end{figure}


\textbf{Efficiency.} Compared with ISV, the scheme of our integrators given in this paper is more complicate. For example, the two-scale method   enlarges the dimension of the original system and this usually adds some computation cost. Fortunately,  Fast Fourier Transform (FFT) techniques can be used in the integrators and we hope that the efficiency of our integrators is still acceptable even compared with methods without using the two-scale  technology. To show this point, we solve this problem on the time interval $[0,T]$. The efficiency\footnote{This test is conducted in a sequential program in MATLAB R2020b
 on a laptop
   ThinkPad X1 nano (CPU:  i7-1160G7 @ 1.20GHz   2.11 GHz, Memory: 16 GB, Os: Microsoft Windows 10 with 64bit). }
 of each integrator  (measured by
the log-log plot of the temporal error  at $t=10$ against CPU time) is displayed in Figure \ref{p10}.
Clearly, our integrators have very competitive efficiency.

   \begin{figure}[t!]
\centering
\includegraphics[width=5.5cm,height=4.8cm]{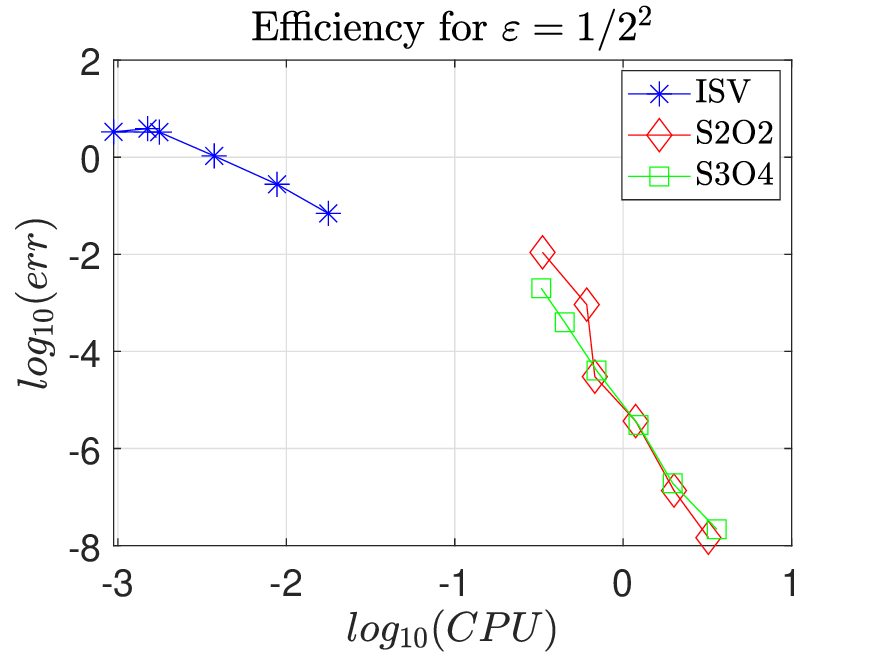}
\includegraphics[width=5.5cm,height=4.8cm]{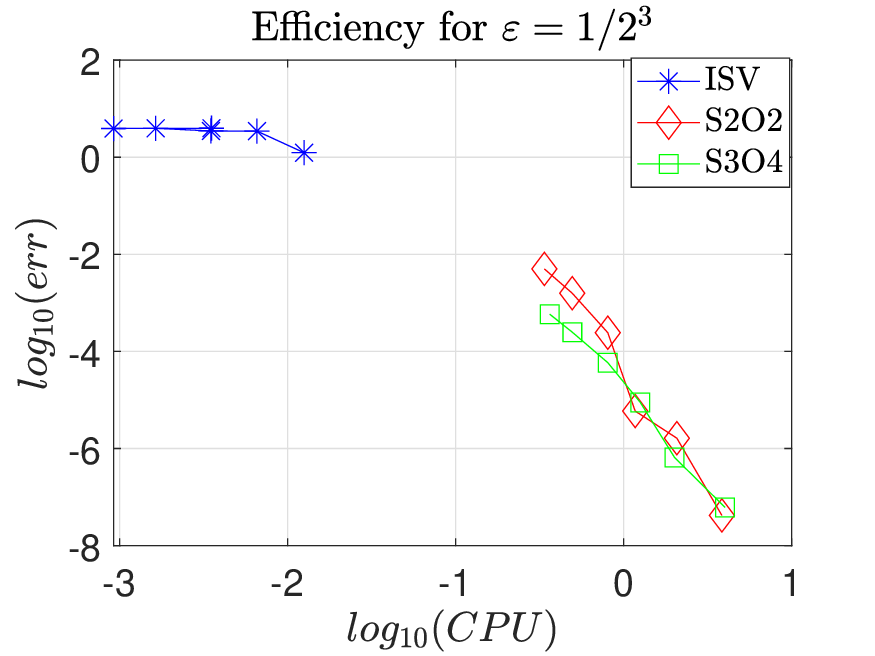}
\caption{Efficiency: the plot of the error $err=err_u+err_v$  against the CPU time (CPU)  with $\hh=1/2^k$ for $k=1,2,\ldots,6$.} \label{p10}
\end{figure}

{\subsubsection{2D test}
We now turn to the 2D case with $\lambda = 1$, employing the initial data defined in \cite{zhao19}:
 \begin{align}
 \varphi_{1}(x,y) &= \exp(-(x+2)^{2} - y^{2}) + \exp(-(x-2)^{2} - y^{2}), \\
 \varphi_{2}(x,y) &= \exp(-x^{2} - y^{2}), \quad (x,y) \in (-16,16)^{2}.
 \end{align}
 The numerical parameters are set as $N_x = N_y = 2^8$ and $N_\tau = 2^5$. The problem is first solved for $\eps=0.05$ with periodic boundary conditions by the S3O4 scheme, and the resulting solution contours are shown in Figure \ref{p2d1}. Subsequently, the accuracy and energy conservation properties are illustrated in Figures \ref{p2d2} and \ref{p2d3}. All observations are consistent with the 1D case.}
   \begin{figure}[t!]
\centering
\includegraphics[width=4.5cm,height=4.5cm]{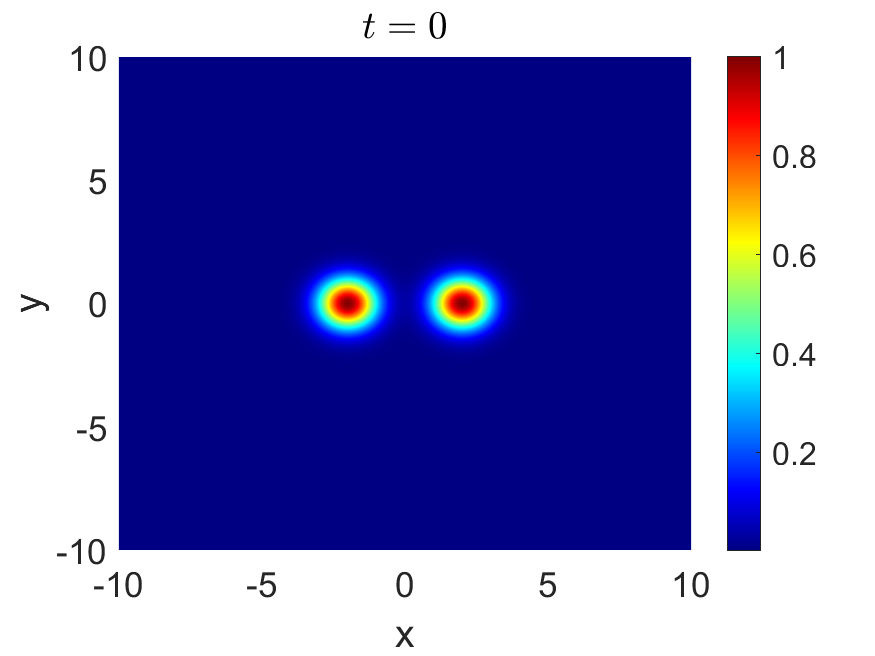}
\includegraphics[width=4.5cm,height=4.5cm]{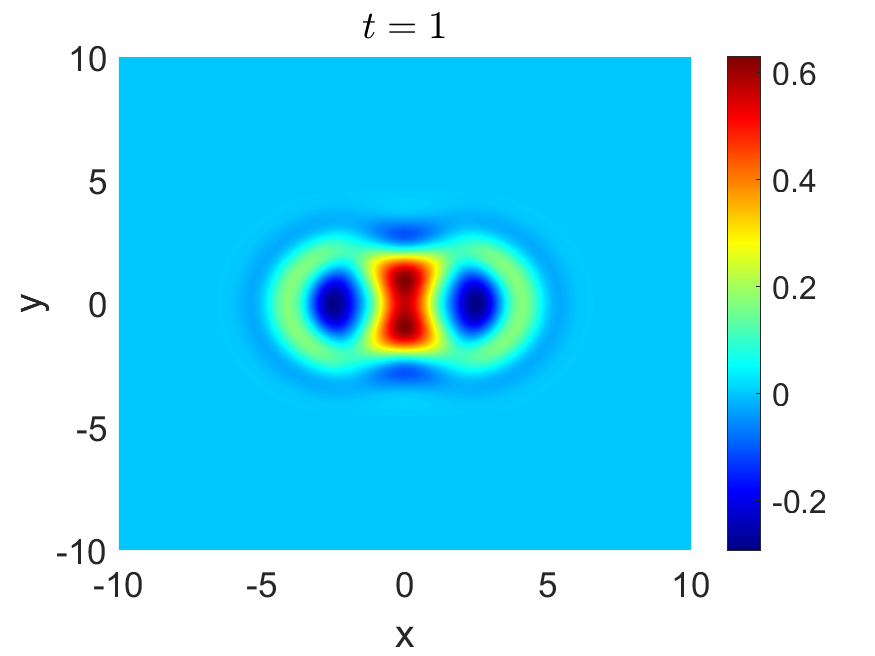}
\includegraphics[width=4.5cm,height=4.5cm]{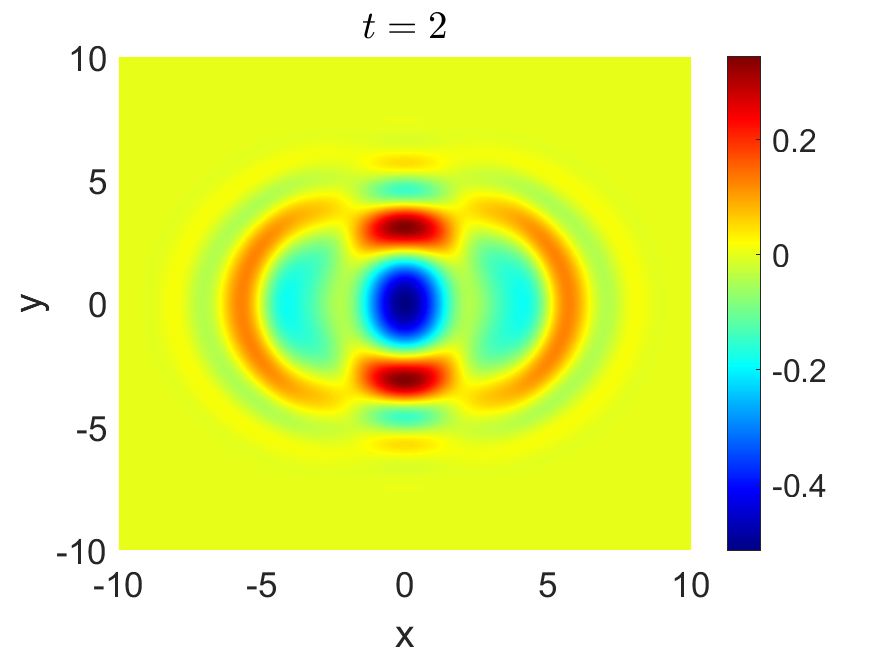}
\caption{2D test: contour plots of the solutions obtained by S3O4  with $h=0.1$ at different time $t$.} \label{p2d1}
\end{figure}

   \begin{figure}[t!]
\centering
\includegraphics[width=5.5cm,height=4.8cm]{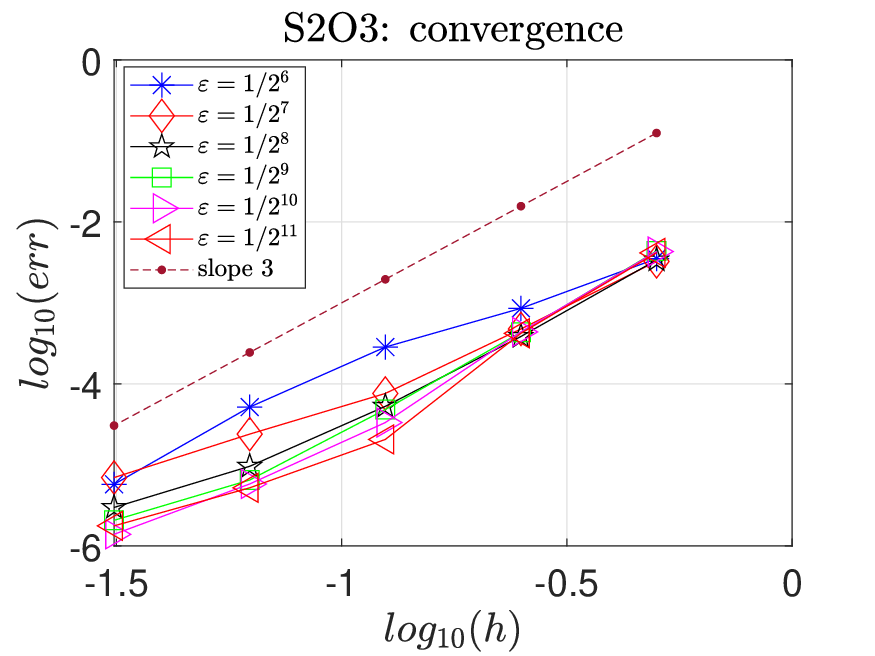}
\includegraphics[width=5.5cm,height=4.8cm]{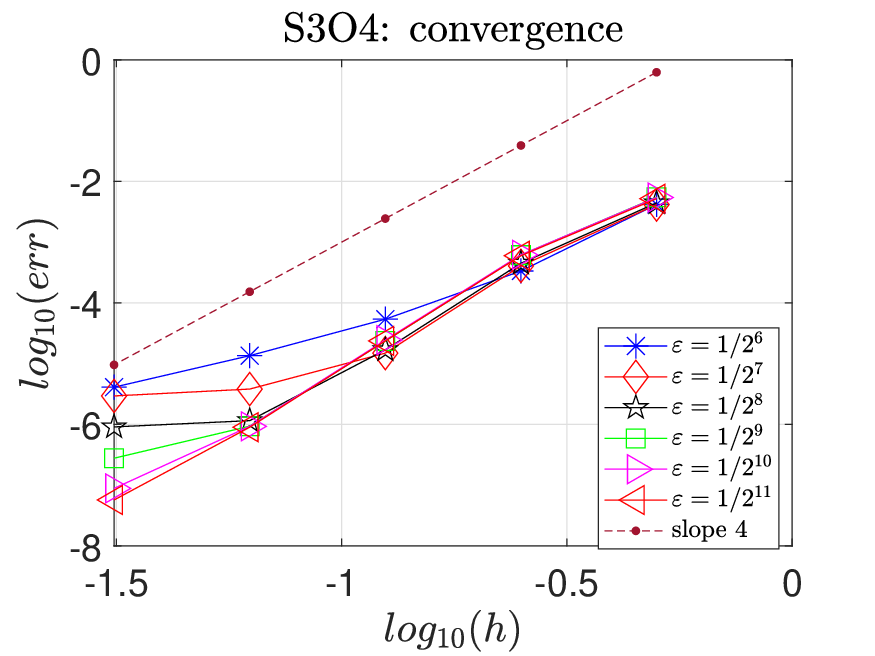}
\caption{2D test:  the log-log plot of the temporal errors $err$  at $ t _n=1$ under different $\eps$, where $\hh=1/2^k$ with $k=1,2,\ldots,5$.} \label{p2d2}
\end{figure}

   \begin{figure}[t!]
\centering
\includegraphics[width=5.5cm,height=4.8cm]{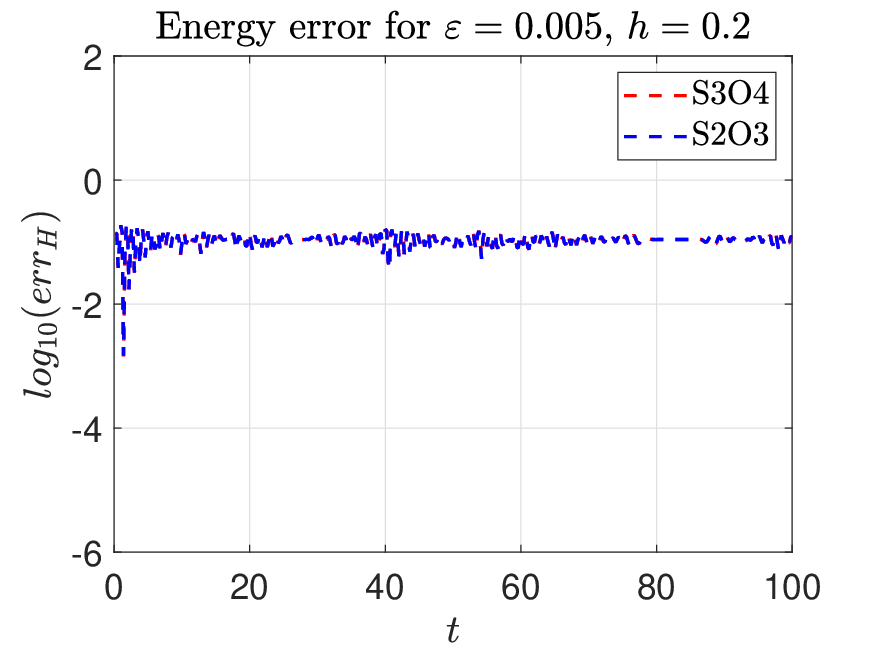}
\includegraphics[width=5.5cm,height=4.8cm]{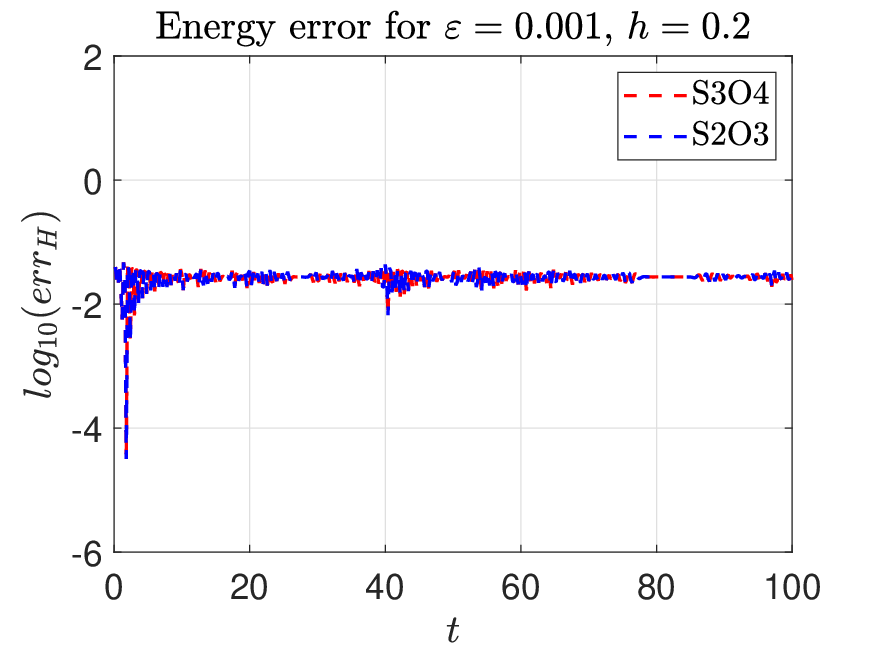}
\caption{2D test:  the plot of the energy error $err_H$ against $t$.} \label{p2d3}
\end{figure}

\section{Proof of  high accuracy  (Theorem \ref{UA thm})}\label{sec:4}
The convergence of the three-stage integrator of order four (S3O4 with $s=3$) is studied in this section. The proof is easily presented for the third order S2O3  and we omit it  for brevity.

\begin{proof}The proof is presented in four steps.

\textbf{I. A lemma and its proof.} A preliminary result is needed in the analysis and we present it by the following lemma.
\begin{lemma}\label{add lem}
Under the conditions of Proposition \ref{TS M},
 for any $\kappa>0$, there exists  $0<T_{\kappa}\leq T$ such that for all   $0<\eps\leq \eps_0$, the  two-scale system \eqref{2scale compact} with the  initial condition $X_0$ \eqref{inv} has a unique solution
$X(\cdot, t ,\tau)\in C^0([0,T_{\kappa} ]\times\mathbb{T};H^{\nu})$ with the bound
\begin{equation}\label{bound 2scaleNEW}
 \sup_{  0<\eps\leq \eps_0}\norm{X(\cdot, t ,\cdot)}_{L^{\infty}_{\tau}(H^{\nu})}\leq \kappa
 \sup_{  0<\eps\leq \eps_0}\norm{X_0(\cdot)}_{L^{\infty}_{\tau}(H^{\nu})}  \forall\      t \in[0,T_{\kappa}  ].
\end{equation}
In addition, the solution $X(\cdot, t ,\tau)$ has first derivatives w.r.t. both $\tau$ and $ t $ and they satisfy
 $$\partial_\tau X(\cdot, t ,\tau)\in C^0([0,T_{\kappa}]\times\mathbb{T};H^{\nu-1}),\ \ \ \partial_{ t }  X(\cdot, t ,\tau)\in C^0([0,T_{\kappa}]\times\mathbb{T};H^{\nu-1}).$$
Moreover, if the nonlinear function $G(t,\tau,X)$ satisfies
$$\norm{G(t,\tau,X)}_{H^{\nu}}\leq C_1\norm{X}_{H^{\nu}}+C_2$$
for some positive constants $C_1,C_2$, then the unique solution $$X(\cdot, t ,\tau)\in C^0([0,T]\times\mathbb{T};H^{\nu})$$ of  \eqref{2scale compact} further has the following estimate
$$ \sup_{  0<\eps\leq \eps_0}\norm{X(\cdot, t ,\cdot)}_{L^{\infty}_{\tau}(H^{\nu})}\leq \Big(
 \sup_{  0<\eps\leq \eps_0}\norm{X_0(\cdot)}_{L^{\infty}_{\tau}(H^{\nu})}+ C_2  t    \Big)e^{C_1   t  }$$
 for $  t \in[0,T].$
\end{lemma}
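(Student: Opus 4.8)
The plan is to treat \eqref{2scale compact} as a semilinear evolution equation and to exploit the norm-preserving structure of its stiff linear part. Writing the equation as $\partial_{\tilde{t}} X = \mathcal{L} X + G_\tau(X)$ with $\mathcal{L}:=-\frac{1}{\eps}\partial_\tau$, the decisive observation is that, because $\tau$ lives on the torus $\mathbb{T}$, the operator $\mathcal{L}$ generates the translation group $(S_\eps(\tilde{t})v)(\cdot,\tau):=v(\cdot,\tau-\tilde{t}/\eps)$, which acts only by shifting the periodic variable and is therefore an \emph{isometry} on $L^\infty_\tau(H^\nu)$ (and on each $\mathcal{X}^\nu$): $\norm{S_\eps(\tilde{t})v}_{L^\infty_\tau(H^\nu)}=\norm{v}_{L^\infty_\tau(H^\nu)}$ for every $\tilde{t}$. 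Thus, although $\mathcal{L}$ is of size $1/\eps$, it produces no amplification of the norm. By the variation-of-constants formula the solution is characterized as a fixed point of
\[(\Phi X)(\tilde{t}):=S_\eps(\tilde{t})X_0+\int_0^{\tilde{t}}S_\eps(\tilde{t}-s)\,G_\cdot\big(X(s)\big)\,ds.\]

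First I would establish Part~1 by a contraction-mapping argument for $\Phi$ on the ball $B_R:=\{X\in C^0([0,T_\kappa/\eps]\times\mathbb{T};H^\nu):\norm{X}_{L^\infty}\leq R\}$, where $\norm{\cdot}_{L^\infty}$ is the norm of $C^0([0,T_\kappa/\eps]\times\mathbb{T};H^\nu)$ and $R:=\kappa\sup_{\eps}\norm{X_0}_{L^\infty_\tau(H^\nu)}$ (the bound being of interest for $\kappa\geq1$, since $X(0)=X_0$). The crucial point is that $G_\tau$ carries an explicit prefactor $\eps$ and, by Assumption~\ref{ass}, is smooth with locally bounded derivatives, so on $B_R$ one has $\norm{G_\cdot(X)}_{L^\infty_\tau(H^\nu)}\leq\eps\,C_R$ together with a Lipschitz bound $\norm{G_\cdot(X)-G_\cdot(Y)}_{L^\infty_\tau(H^\nu)}\leq\eps\,\ell_R\norm{X-Y}_{L^\infty}$. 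Using the isometry of $S_\eps$ and integrating over the long interval of length $T_\kappa/\eps$,
\[\norm{\Phi X}_{L^\infty}\leq\norm{X_0}_{L^\infty_\tau(H^\nu)}+\eps\,C_R\cdot\frac{T_\kappa}{\eps}=\norm{X_0}_{L^\infty_\tau(H^\nu)}+C_R\,T_\kappa,\]
and likewise a contraction constant $\ell_R\,T_\kappa$, both uniform in $\eps$. Choosing $T_\kappa$ small (depending only on $\kappa,R$, not on $\eps$) so that $C_R T_\kappa\leq(\kappa-1)\sup_\eps\norm{X_0}_{L^\infty_\tau(H^\nu)}$ and $\ell_R T_\kappa<1$ makes $\Phi$ a self-map of $B_R$ and a contraction; Banach's fixed-point theorem then yields a unique solution on $[0,T_\kappa/\eps]$ satisfying \eqref{bound 2scaleNEW}. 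I expect this balancing of the $\mathcal{O}(\eps)$ size of the nonlinearity against the $\mathcal{O}(1/\eps)$ length of the time interval --- possible only because $S_\eps$ is norm-preserving --- to be the main obstacle and the crux of the argument.

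For Part~2 I would bootstrap regularity from the fixed-point identity $\Phi X=X$. Differentiating in $\tau$ transfers one $\tau$-derivative onto $X_0(\cdot-\tilde{t}/\eps)$ and onto $G_{\tau-(\tilde{t}-s)/\eps}$; since $X_0\in\mathcal{X}^\nu$ and, by Assumption~\ref{ass}, $\partial_\tau G_\tau$ maps $H^\sigma$ into $H^{\sigma-1}$, a further contraction/continuity argument in $C^0([0,T_\kappa/\eps]\times\mathbb{T};H^{\nu-1})$ gives $\partial_\tau X\in C^0([0,T_\kappa/\eps]\times\mathbb{T};H^{\nu-1})$. The time derivative then comes for free from the equation itself, $\partial_{\tilde{t}}X=-\frac{1}{\eps}\partial_\tau X+G_\tau(X)$, whose right-hand side lies in $C^0([0,T_\kappa/\eps]\times\mathbb{T};H^{\nu-1})$ because $\partial_\tau X\in H^{\nu-1}$ and $G_\tau(X)\in H^\nu\hookrightarrow H^{\nu-1}$.

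Finally, for Part~3 the contraction argument is replaced by an a priori estimate that precludes blow-up. Under the linear-growth hypothesis on $G_\tau$, which together with its explicit $\eps$-prefactor gives $\norm{G_\tau(X)}_{H^\nu}\leq\eps(C_1\norm{X}_{H^\nu}+C_2)$, the Duhamel identity and the isometry of $S_\eps$ yield
\[\norm{X(\tilde{t})}_{L^\infty_\tau(H^\nu)}\leq\norm{X_0}_{L^\infty_\tau(H^\nu)}+\int_0^{\tilde{t}}\eps\big(C_1\norm{X(s)}_{L^\infty_\tau(H^\nu)}+C_2\big)\,ds.\]
The integral form of Gronwall's inequality (with nondecreasing inhomogeneity $\sup_\eps\norm{X_0}_{L^\infty_\tau(H^\nu)}+C_2\tilde{t}\eps$ and rate $C_1\eps$) then produces the stated bound $\big(\sup_\eps\norm{X_0}_{L^\infty_\tau(H^\nu)}+C_2\tilde{t}\eps\big)e^{C_1\tilde{t}\eps}$, which stays $\mathcal{O}(1)$ uniformly for $\tilde{t}\in[0,T/\eps]$ precisely because the $\eps$ from the nonlinearity cancels the $1/\eps$ of the interval length. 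Since the solution remains bounded, the standard continuation principle extends it to the whole interval $[0,T/\eps]$.
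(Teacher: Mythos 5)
Your proposal is correct and is essentially the paper's own argument in different packaging: the isometric translation group $S_\eps(\tilde t)$ in your Duhamel formula is exactly the paper's filtering change of variables $\chi(\tilde t,\tau)=X(\cdot,\tilde t,\tau+\tilde t/\eps)$, which turns \eqref{2scale compact} into the non-stiff Banach-space ODE $\partial_{\tilde t}\chi=G_{\tau+\tilde t/\eps}(\chi)$, and your explicit contraction argument is the Cauchy--Lipschitz theorem the paper then invokes. Both proofs rest on the same two facts you single out as the crux --- the transport part preserves the norm, and the explicit $\eps$-prefactor of $G_\tau$ balances the $\mathcal{O}(1/\eps)$ interval length --- and both handle the derivatives and the Gronwall/linear-growth case the same way (differentiate the equation in $\tau$, losing one derivative via Assumption \ref{ass}, then read off $\partial_{\tilde t}X$ from the equation itself).
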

\begin{proof}This lemma can be shown in a similar way as Proposition 2.1 of \cite{Chartier15} with some modifications.
Denote the smooth solution $X(\cdot, t ,\tau)$ of \eqref{2scale compact} by a new function $$\chi( t ,\tau)=X(\cdot, t ,\tau+ t /\eps^2).$$ Based on
\eqref{2scale compact}, it can be seen that
$$\partial_{ t }\chi( t ,\tau)= G(t,\tau+ t /\eps^2,\chi( t ,\tau))$$ with $\chi(0,\tau)=X_0(\tau).$
Hence $\partial_{ t } \chi$ is in $H^{\nu}$.
From the Cauchy–Lipschitz theorem in  $H^{\nu}$, it follows that this system has a unique solution on $[0,T]$ which satisfies
$$\norm{\chi( t ,\tau)}_{H^{\nu}}\leq C_1\norm{X_0(\tau)}_{H^{\nu}}+\int_0^{ t } \norm{G(t,\tau+\theta/\eps^2,\chi(\theta,\tau))}_{H^{\nu}}d \theta.$$
Based on this result and with the same arguments of \cite{Chartier15}, we obtain the estimate \eqref{bound 2scaleNEW}.

Now we derive the differential equation of $\partial_\tau \chi ( t ,\tau)$ which reads
  \begin{equation}\label{ptau de}\begin{aligned}&\partial_{ t } \big(\partial_\tau \chi\big) ( t ,\tau)=\partial_{\tau} \big(\partial_{ t } \chi\big) ( t ,\tau)\\
  =& \partial_{\tau} G(t,\tau+ t /\eps^2,\chi( t ,\tau))+ \partial_{X} G(t,\tau+ t /\eps^2,\chi( t ,\tau))\partial_\tau \chi( t ,\tau).\end{aligned}  \end{equation}
Observing that the initial value $\partial_\tau \chi (0,\tau)=\partial_\tau X_0(\tau)\in  H^{\nu-1}$ and $\partial_{\tau} G$ (resp. $\partial_{X} G$) is a continuous and locally bounded function  from   $\bT\times H^{\nu}$  to $H^{\nu-1}$ (resp.  $\mathcal{L}(H^{\nu-1},H^{\nu-1})$), one gets that the solution $\partial_\tau \chi$ of  differential equation  \eqref{ptau de} is unique and  in $H^{\nu-1}$. Moreover, according to $\chi( t ,\tau)=X(\cdot, t ,\tau+ t /\eps^2)$, it is deduced that
$$\partial_{\tau}X(\cdot, t ,\tau)=\partial_{\tau} \chi( t ,\tau- t /\eps^2) \in H^{\nu-1}$$
and  $$\partial_{ t }X(\cdot, t ,\tau)=\partial_{ t } \chi( t ,\tau- t /\eps^2)-\frac{1}{\eps^2}\partial_{\tau} \chi( t ,\tau- t /\eps^2)  \in H^{\nu-1}.$$
The last result can be done  using the bootstrap type argument and the Gronwall lemma.
\end{proof}

\textbf{II. Proof of Proposition  \ref{TS M} (boundedness of the solution of the two-scale system).}
Noticing that the operators $\Pi $ and $A$ are both bounded on $C^0(\bT;H^{\sigma})$ for any $\sigma\geq0$, the initial value  \eqref{inv}  can be estimated that $X_0$ \eqref{inv} is uniformly bounded w.r.t. $\eps$ in $L^{\infty}_{\tau}(H^{\nu})$. Combining this with Lemma \ref{add lem},
 it is known that the two-scale system  \eqref{2scale}  as well as the initial value \eqref{inv}    determines a unique solution satisfying $\norm{X( t ,\tau)}_{L^{\infty}_{\tau}(H^{\nu})}\leq C$. Furthermore, the first derivative of $X( t ,\tau)$\footnote{In the following parts of this section, we omit the expression $x$ for bervity.} w.r.t. $ t $  exists and is a function with value in $H^{\nu-1}$.

 In what follows, we study the boundedness of the  derivatives of $X( t ,\tau)$ w.r.t. $t$ and hope to get a more rigorous result. To this end, we first consider the differential equation of the first derivative  $\partial_{ t } X( t ,\tau):= V( t ,\tau)$ which reads
 $$\partial_{ t } V( t ,\tau)+\frac{1}{\eps^2}
 \partial_\tau V( t ,\tau)=\partial_{t}  G(t ,\tau,X( t ,\tau))+\partial_{X}  G(t ,\tau,X( t ,\tau))V( t ,\tau).$$
The initial date of this system is
  \[
\begin{aligned}
&V_0:=V(0,\tau)=\partial_{ t } X(0,\tau)= G(t,\tau,X_0)-\frac{1}{\eps^2}L
X_0 \\
=& G(t,\tau,X_0)- L \kappa_1(\tau,\underline{X}^{[0]})-\eps^2  L\kappa_2(\tau,\underline{X}^{[0]})-\eps^4 L\kappa_3(\tau,\underline{X}^{[0]})\\
=& G(t,\tau,X_0)- G(t,\tau,\underline{X}^{[0]})+\Pi G(t,\tau,\underline{X}^{[0]})-\eps^2  L\kappa_2(\tau,\underline{X}^{[0]})-\eps^4 L\kappa_3(\tau,\underline{X}^{[0]})\\
=& \partial_{X} G(t,\tau,\underline{X}^{[0]}) (X_0-\underline{X}^{[0]})+ \Pi G(t,\tau,\underline{X}^{[0]})+\mathcal{O}_{\mathcal{X}^{\nu-1}}(\eps^{2})\\
=& \eps^2 \partial_{X} G(t,\tau,\underline{X}^{[0]}) \kappa_1(\tau,\underline{X}^{[0]})+ \Pi G(t,\tau,\underline{X}^{[0]})+\mathcal{O}_{\mathcal{X}^{\nu-1}}(\eps^{2})
\\
=&  \Pi G(t,\tau,\underline{X}^{[0]})+\mathcal{O}_{\mathcal{X}^{\nu-1}}(\eps^{2}).
 \end{aligned}
\]

According to this result, we obtain $$V_0=\mathcal{O}_{\mathcal{X}^{\nu-1}}(1).$$ Moreover, the nonlinear function satisfies
$$\norm{\partial_{t}  G(t ,\tau,X( t ,\tau))+\partial_{X}  G(t ,\tau,X( t ,\tau))V( t ,\tau)}_{H^{\nu-1}}\leq  C_1+C_2\norm{V( t ,\tau)}_{H^{\nu-1}}.$$
 These results and Lemma \ref{add lem}  yield $$\partial_{ t } X( t ,\tau)=\mathcal{O}_{\mathcal{X}^{\nu-1}}(1).$$

{This procedure can be proceeded in an analogous way for $$\partial^2_{ t } X( t ,\tau),\partial^3_{ t } X( t ,\tau),\partial^4_{ t } X( t ,\tau),$$ and we get
  $$\partial^2_{ t } X( t ,\tau)=\mathcal{O}_{\mathcal{X}^{\nu-2}}(1),\ \ \partial^3_{ t } X( t ,\tau)=\mathcal{O}_{\mathcal{X}^{\nu-3}}(1),\ \ \partial^4_{ t } X( t ,\tau)=\mathcal{O}_{\mathcal{X}^{\nu-4}}(1).$$}

\textbf{III. Proof of Proposition  \ref{stiff thm} (local errors of exponential integrators).} Then we prove Proposition  \ref{stiff thm}, where the local errors of our integrators for the transformed system are stated.

 For all  $\hh\geq0$, it is true that $\varphi_0(\hh M)$ and
$\varphi_i(\hh M)$ for  $i=1,2,3$ are   uniformly bounded w.r.t. $\eps$. Therefore,
the coefficients $\bar{a}_{i\rho}(\hh M), \bar{b}_{\rho}(\hh M) \ \textmd{for}\  i,\rho=1,2,3$ of exponential integrators are uniformly  bounded.

Define the error functions by
\[
\begin{aligned}
&e^n(\tau):=Z( t _n,\tau)-I_{\mathcal{M}} Z^n,\ \ E^{ni}(\tau):=Z( t _n+c_i\hh ,\tau)-I_{\mathcal{M}} Z^{ni},\end{aligned}
\]
and the projected errors as
\[
\begin{aligned}
&e_{\mathcal{M}}^n(\tau):=P_{\mathcal{M}} Z( t _n,\tau)-Z_{\mathcal{M}}^n,\ \ E_{\mathcal{M}}^{ni}(\tau):=P_{\mathcal{M}} Z( t _n+c_i\hh ,\tau)-Z_{\mathcal{M}}^{ni},\end{aligned}
\]
where $I_{\mathcal{M}} $ and $P_{\mathcal{M}} $ are defined in  \eqref{PI}.
 By the
triangle inequality and estimates on projection error \cite{Shen}, one has
\[
\begin{aligned}  \|e^n\|_{H^{\nu-4}}&\leq  \|e_{\mathcal{M}}^n\|_{H^{\nu-4}}+ \| Z_{\mathcal{M}}^n-I_{\mathcal{M}} Z^n\|_{H^{\nu-4}}+ \|Z( t _n,\tau)-P_{\mathcal{M}} Z( t _n,\tau)\|_{H^{\nu-4}} \\&\lesssim \|e_{\mathcal{M}}^n\|_{H^{\nu-4}} + \delta^{\tau}_{\mathcal{F}},
\end{aligned}
\]
and similarly $\|E^{ni}\|_{H^{\nu-4}}\lesssim \|E_{\mathcal{M}}^{ni}\|_{H^{\nu-4}}+ \delta^{\tau}_{\mathcal{F}}$, where $\delta^{\tau}_{\mathcal{F}}$  denotes the  error in   $\tau$ brought by the Fourier pseudo-spectral method.
Therefore, the estimations for $e^n$ and  $E^{ni}$ could be converted to the estimations for  $e_{\mathcal{M}}^n$ and $E_{\mathcal{M}}^{ni}$.

Since the scheme \eqref{erk dingyi} is implicit, iterative solutions
are {needed,} and  we consider
pattern
\begin{equation*}
\begin{aligned}
&\big(\widetilde{Z^{ni}}\big)^{[0]}=e^{c_{i}\hh M}\widetilde{Z^{n}}+  \hh \textstyle\sum\limits_{\rho=1}^{3}\bar{a}_{i\rho}(\hh M) \Gamma  \big(t_n+c_\rho h,\widetilde{Z^{n}}\big),\\
&\big(\widetilde{Z^{ni}}\big)^{[j+1]}=e^{c_{i}\hh M}\widetilde{Z^{n}}+ \hh \textstyle\sum\limits_{\rho=1}^{3}\bar{a}_{i\rho}(\hh M) \Gamma  \big(t_n+c_\rho h,\big(\widetilde{Z^{n\rho}}\big)^{[j]}\big),
\end{aligned}
\end{equation*}
for
$j=0,1,\ldots,j^\textmd{stopped}.$
 Then according to the property of $e^{c_{i}\hh M}$ and the
boundedness of the coefficients, the following result can be proved.
That is, there exists a small constant $0<\hh_0< 1$ such that when
$0<\hh \leq \hh _0$ and $\widetilde{Z^0} \in H^{\nu}$ with $ \|\widetilde{Z^0}
\|_{H^{\nu}}\leq K_1$,   we  can obtain  $\widetilde{Z^{n}}\in H^{\nu},\
\ \big(\widetilde{Z^{ni}}\big)^{[j^\textmd{stopped}]}\in H^{\nu} $ as well as
their bounds $\norm{\widetilde{Z^{n}}}_{H^{\nu}}\leq C_0,\ \
\norm{\big(\widetilde{Z^{ni}}\big)^{[j^\textmd{stopped}]}}_{H^{\nu}}\leq C_0, $
where the constant $C_0$ depends on $T$ and $K_1$.

The error system of FS-F method is to find
$e_{\mathcal{M}}^n(\tau)$ and
$E_{\mathcal{M}}^{ni}(\tau)$ in the
space $Y_{\mathcal{M}}$, {i.e., we have}
  \begin{equation*}
\begin{aligned}
 &e_{\mathcal{M}}^{n+1}(\tau)=\sum\limits_{k\in \mathcal{M}}
\big(\widetilde{e_{\mathcal{M}}^{n+1}}\big)_k\mathrm{e}^{\mathrm{i} k \tau },\quad
E_{\mathcal{M}}^{ni}(\tau)=\sum\limits_{k\in \mathcal{M}}
\big(\widetilde{E_{\mathcal{M}}^{ni}}\big)_k\mathrm{e}^{\mathrm{i} k \tau },
\end{aligned}
\end{equation*}
where
\begin{equation}\label{error equ}
\begin{aligned}
\widetilde{e_{\mathcal{M}}^{n+1}}&=e^{\hh M}\widetilde{e_{\mathcal{M}}^{n}}+\hh \sum\limits_{\rho=1}^{3}\bar{b}_{\rho}(\hh M) \Delta\widetilde{\Gamma^{n\rho}}+\widetilde{\delta^{n+1}},\\
\widetilde{E_{\mathcal{M}}^{ni}}&=e^{c_i\hh M}\widetilde{e_{\mathcal{M}}^{n}}+ \hh   \textstyle\sum\limits_{\rho=1}^{3}\bar{a}_{i\rho}(\hh M) \Delta\widetilde{\Gamma^{n\rho}}+\widetilde{\Delta^{ni}},
\end{aligned}
\end{equation}
{and} $\Delta\widetilde{\Gamma^{n\rho}}:=\Gamma  \big( t _n+c_{\rho}\hh, P_{\mathcal{M}} Z( t _n+c_{\rho}\hh ,\tau)\big)- \Gamma  \big( t _n+c_{\rho}\hh,Z_{\mathcal{M}}^{n\rho}\big)$ . Here the remainders
$\widetilde{\delta^{n+1}},\ \widetilde{\Delta^{ni}}$   are
bounded by  inserting the exact solution of \eqref{2scale Fourier}
into the numerical approximation, {i.e.,}
\begin{equation*}
\begin{array}[c]{ll}%
\widetilde{\mathbf{Z}( t _n+\hh )}&=e^{\hh M}\widetilde{\mathbf{Z}( t _n)}+   \hh \textstyle\sum\limits_{\rho=1}^{3}\bar{b}_{\rho}(\hh M)
\Gamma  \big( t _n+c_{\rho}\hh,\widetilde{\mathbf{Z}( t _n+c_{\rho}\hh )}\big)+\widetilde{\delta^{n+1}},\\
\widetilde{\mathbf{Z}( t _n+c_i\hh )}&=e^{c_{i}\hh M}\widetilde{\mathbf{Z}( t _n)}+  \hh \textstyle\sum\limits_{\rho=1}^{3}\bar{a}_{i\rho}(\hh M) \Gamma  \big( t _n+c_{\rho}\hh,\widetilde{\mathbf{Z}( t _n+c_{\rho}\hh )}\big)+\widetilde{\Delta^{ni}}.
\end{array}
\end{equation*}
By the Duhamel principle and Taylor expansions, the remainders $\widetilde{\delta^{n+1}}$   can be represented as
\[
\begin{aligned}  \widetilde{\delta^{n+1}}=& \hh    \int_{0}^1   e^{(1-z)\hh M}  \sum\limits_{\rho=1}^{4}\frac{(z  \hh )^{\rho-1}}{(\rho-1)!}\frac{\textmd{d}^{\rho-1}}{\textmd{d}  t ^{\rho-1}} \widetilde{\Gamma( t _n) } {\rm d}z\\&+\hh  \sum\limits_{j =1}^{s}   \bar{b}_{j}(\hh M) \sum\limits_{\rho=1}^{4}\frac{ c_{j}^{\rho-1}\hh^{\rho-1}}{(\rho-1)!}\frac{\textmd{d}^{\rho-1}}{\textmd{d} t ^{\rho-1}}\widetilde{\Gamma( t _n)}+\widetilde{\delta_{4}^{n+1}} \\
=&   \sum\limits_{\rho=1}^{4}\hh^\rho \psi_{\rho}(\hh M)  \frac{\textmd{d}^{\rho-1}}{\textmd{d} t ^{\rho-1}}\widetilde{\Gamma( t _n)}+\widetilde{\delta_{4}^{n+1}},
\end{aligned}
\]
where  we take the notation
$\widetilde{\Gamma( t )}:=\Gamma  \big(t,\widetilde{\mathbf{Z}( t )}\big)$.
With the bounds of the solution of the two-scale system proposed in  Proposition  \ref{TS M}, it is obtained that
$\frac{\textmd{d}^{\rho}}{\textmd{d} t ^{\rho}}\widetilde{\Gamma( t _n)}=\mathcal{O}_{H^{\nu-\rho}}(1)$ for $\rho=0,1,\ldots,4$.
Thus  we get for $\zeta\in[0,1]$ $$\widetilde{\delta^{n+1}}=\mathcal{O}_{H^{\nu-4}}\Big(   \hh ^{5}\frac{\textmd{d}^{4}}{\textmd{d} t^{4}} \widetilde{\Gamma( t _n+\zeta \hh ) } \Big)=\mathcal{O}_{H^{\nu-4}}\big(  \hh^{5} \big).$$
Similarly, one has
$$\widetilde{\Delta^{ni}}=    \sum\limits_{\rho=1}^{3}\hh^{\rho} \psi_{\rho,i}(\hh M)   \frac{\textmd{d}^{\rho-1}}{\textmd{d}  t ^{\rho-1}}\widetilde{\Gamma( t _n)}+\widetilde{\Delta_{3}^{ni}}
$$
with $\widetilde{\Delta_{3}^{ni}}=\mathcal{O}_{H^{\nu-3}}\big(  \hh ^{4} \big)$.
Then using the stiff order conditions presented in  Proposition \ref{stiff
thm},
{we know that} $ \widetilde{\Delta^{ni}}=\widetilde{\Delta_{3}^{ni}}$ and $$\widetilde{\delta^{n+1}}=    \hh^4 \psi_{4}(\hh M) \frac{\textmd{d}^{3}}{\textmd{d} t ^{3}}\widetilde{\Gamma( t _n)}+\widetilde{\delta_{4}^{n+1}}.$$ The
proof of Proposition  \ref{stiff thm} is immediately complete.

\textbf{IV. Proof of the global errors.}
{We are now in a position} to derive the error bounds in a standard way. To make the analysis be more compact, here we only present  the main points but without details.

For the  error recursion \eqref{error equ}, by Taylor series, one gets $\Delta\widetilde{\Gamma^{n\rho}}=J_n\widetilde{E_{\mathcal{M}}^{n\rho}}$ with a matrix $J_n$.
Then there exist bounded operators $\mathcal{N}^{ni}(\widetilde{e_{\mathcal{M}}^{n}})$ such that
$$\widetilde{E_{\mathcal{M}}^{ni}}=\mathcal{N}^{ni}(\widetilde{e_{\mathcal{M}}^{n}})\widetilde{e_{\mathcal{M}}^{n}}
+\widetilde{\Delta^{ni}}+\hh^4 \mathcal{R}^{ni}$$ with uniformly bounded remainders $\mathcal{R}^{ni}$ in $H^{\nu-4}$.
Now it is arrived at
\begin{equation*}
\begin{aligned}
\widetilde{e_{\mathcal{M}}^{n+1}}=&e^{\hh M}\widetilde{e_{\mathcal{M}}^{n}}+\hh   \sum\limits_{\rho=1}^{3} \bar{b}_{\rho}(\hh M) J_n\mathcal{N}^{n\rho}(\widetilde{e_{\mathcal{M}}^{n}})\widetilde{e_{\mathcal{M}}^{n}}\\
&+\hh   \sum\limits_{\rho=1}^{3} \bar{b}_{\rho}(\hh M)J_n\widetilde{\Delta^{n\rho}} +   \hh^4 \psi_{4}(\hh M)\frac{\textmd{d}^{3}}{\textmd{d}  t ^{3}}\widetilde{\Gamma( t _n)}+\mathcal{O}_{H^{\nu-4}}\big(  \hh^{5}\big)\\
=& e^{\hh M}\widetilde{e_{\mathcal{M}}^{n}}+\hh  \mathcal{N}^{n}(\widetilde{e_{\mathcal{M}}^{n}}) \widetilde{e_{\mathcal{M}}^{n}} +   \hh^4 \psi_{4}(\hh M)\frac{\textmd{d}^{3}}{\textmd{d} t ^{3}}\widetilde{\Gamma( t _n)}+\mathcal{O}_{H^{\nu-4}}\big(  \hh^{5}\big),
\end{aligned}\end{equation*}
where we use the notation $\mathcal{N}^{n}(\widetilde{e_{\mathcal{M}}^{n}}):=\sum\limits_{\rho=1}^{3} \bar{b}_{\rho}(\hh M) J_n\mathcal{N}^{n\rho}(\widetilde{e_{\mathcal{M}}^{n}}).$
Solving this recursion leads to
\begin{equation*}
\begin{aligned}
\widetilde{e_{\mathcal{M}}^{n}}=&\hh  \sum_{j=0}^{n-1} e^{(n-j-1)\hh M} \mathcal{N}^{j}(\widetilde{e_{\mathcal{M}}^{j}}) \widetilde{e_{\mathcal{M}}^{j}}\\&+   \hh^4\sum_{j=0}^{n-1} e^{(n-j-1)\hh M}\psi_{4}(\hh M)  \frac{\textmd{d}^{3}}{\textmd{d} t ^{3}}\widetilde{\Gamma( t _n)}+\mathcal{O}_{H^{\nu-4}}\big(\hh ^{4} \big).
\end{aligned}\end{equation*}
The order condition $\psi_{4}(0)=0$ shows that there exists bounded operator $\tilde{\psi}_{4}(\hh M)$ with
 $\psi_{4}(\hh M)=\hh M \tilde{\psi}_{4}(\hh M)$ and Lemma 4.8 of \cite{Ostermann06} contributes %
\begin{equation*}\begin{array}{ll}\sum_{j=0}^{n-1} e^{(n-j-1)\hh M}\psi_{4}(\hh M) \frac{\textmd{d}^{3}}{\textmd{d} t ^{3}}\widetilde{\Gamma( t _n)}
&=\sum_{j=0}^{n-1}e^{(n-j-1)\hh M}\hh M \tilde{\psi}_{4}(\hh M)  \frac{\textmd{d}^{3}}{\textmd{d} t ^{3}}\widetilde{\Gamma( t _n)}\\
&=\mathcal{O}_{H^{\nu-4}}\big(\frac{\textmd{d}^{3}}{\textmd{d}  t ^{3}}\widetilde{\Gamma( t _n)}\big) =\mathcal{O}_{H^{\nu-4}} (1).
\end{array}
\end{equation*}
  Combined with the above results and using Gronwall inequality leads to $ \widetilde{e_{\mathcal{M}}^{n}}=\mathcal{O}_{H^{\nu-4}}\big(\hh ^{4}  \big).$
  This result and the formulation of the scheme give the final convergence for the numerical solutions of the original system \eqref{klein-gordon}:
  \begin{equation*}
\begin{aligned}
 &\norm{u(\cdot,n\hh)-u^n}_{H^{\nu-3}} \leq\norm{X(\cdot,n\hh,n\hh/\eps^2)-I_{\mathcal{M}} Z^n}_{H^{\nu-4}} \\ \leq&\norm{X(\cdot,n\hh,n\hh/\eps^2)-Z( t ,\tau)}_{H^{\nu-4}}+\norm{Z( t ,\tau)-I_{\mathcal{M}} Z^n}_{H^{\nu-4}}\\ \leq&\delta^x_{\mathcal{F}}+\norm{P_{\mathcal{M}} Z( t _n,\tau)-Z_{\mathcal{M}}^n }_{H^{\nu-4}}+\delta^{\tau}_{\mathcal{F}}
   \leq\delta_{\mathcal{F}}+  C  \hh^4,\end{aligned}
\end{equation*}
 and similarly
    \begin{equation*}
\begin{aligned}
 \norm{v(\cdot,n\hh)-v^n}_{H^{\nu-4}} \leq\delta^x_{\mathcal{F}}+\frac{1}{\eps^2}\norm{P_{\mathcal{M}} Z( t _n,\tau)-Z_{\mathcal{M}}^n }_{H^{\nu-3}}+\delta^{\tau}_{\mathcal{F}}
   \leq\delta_{\mathcal{F}}+  C  \hh^4/\eps^{2}.
\end{aligned}
\end{equation*}

The proof  of Theorem \ref{UA thm} is complete.
\end{proof}
%
%

\section{Proof of long time energy {near conservation} (Theorem  \ref{Long-time thm})}\label{sec:5}
Long time energy {near conservation} is proved mainly based on the technology of modulated Fourier expansions, which
  was firstly developed  in \cite{hairer2000} and was used for the long-term analysis of many methods \cite{Cohen0,Cohen2,Lubich2006,Lubich2,lubich19,WW,WZ,WZ21}. The main differences and contributions of long term analysis given in this section involve in two aspects. We extend  the technology of modulated Fourier expansions to multi-stage schemes and prove the long-time result  for  both two-stage   and three-stage methods. This  provides some developments for studying long-term behavior of various methods.   Moreover, in contrast to
the existing work, we neither assume bounded energy, nor assume small initial value for the considered system. In the proof,
the result is derived for the methods applied to the energy unbounded system \eqref{klein-gordon} with large initial value.
 Finite dimensional vector space and Euclidean norm are considered in the analysis.

\begin{proof}The proof consists of three parts. The first two parts are given for S2O3 and the last one is devoted to the proof of S3O4.

\textbf{I. Modulated Fourier expansions.}
\begin{lemma}
With the notations introduced in Section \ref{sec:31} and under the conditions given in Theorem \ref{Long-time thm},  the numerical result $ \widehat{Z^{n}}$  \eqref{cs ei-ful2} obtained from
S2O3  can be expressed by the
following modulated Fourier expansion at   $  t =n\hh$
{\begin{equation}\begin{array}{ll}
\widehat{Z^{n}}=\sum\limits_{\mathbf{k}\in\mathcal{N}_N^*}
\mathrm{e}^{\mathrm{i}(\mathbf{k} \cdot \boldsymbol{\omega})
 t }\alpha^{\mathbf{k}}( t )+R_{Z ,N}( t ),
           \end{array} \label{MFE-ERKN-0}%
\end{equation}
where  $\alpha^{\mathbf{k}}$   are smooth coefficient functions for $  t =n\hh$. These functions and their derivatives  have the  bounds
\begin{equation}%
\begin{array}{ll}
  \alpha_{j,l}^{\langle j\rangle_l}( t )=\mathcal{O}(\delta_0),\ \
  \dot{\alpha}_{j,l}^{\langle j\rangle_l}( t )=\mathcal{O}(\frac{\hh^2 }{\iota_1 \norm{\iota_2}_{L^2}}  \delta_0^3),\\
 \alpha_{j,l}^{\mathbf{k}}( t )=\mathcal{O}\Big(\frac{\hh   }{ \norm{\iota_2}_{L^2}}   \delta_0^{|\mathbf{k}|} \Big),\ \    {\mathbf{k}\neq \langle j\rangle_l,}
\end{array} %
\label{coefficient func}%
\end{equation}
where $j\in \{-N_\tau/2,-N_\tau/2+1,\ldots,N_\tau/2\}$,  $l=1,2,\ldots,2\hat{d}$ and \begin{equation}\label{iota}%
\begin{array}{ll}&\iota_1:=\frac{\hh^3 \omega_{j,l}^2}{4\sin^2(\hh \omega_{j,l}/2)},\   \iota_2 :=- \mathrm{i}\hh^2 \Omega^2  \Big(\hh  \Omega-
\cot\zeta+
\cos\zeta\csc\zeta\Big)^{-1},
\end{array} %
\end{equation}
with $\zeta:= \frac{1}{2}\hh (\Omega -(\mathbf{k}\cdot \boldsymbol{\omega})I) $.
The remainder   appeared in  \eqref{MFE-ERKN-0} is bounded by
\begin{equation}\begin{array}{ll}
&R_{Z ,N}( t )=\mathcal{O}( t    h^2\delta_0^{N+1}),
\end{array}\label{remainder}%
\end{equation}}The constants symbolised by the notation $\mathcal{O}$ are independent of $\hh, \eps$, but depend on $ c_0, c_1$ appeared in the conditions of Theorem \ref{Long-time thm}.
\end{lemma}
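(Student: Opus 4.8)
The plan is to construct a modulated Fourier expansion (MFE) for the two-stage scheme S2O2 in the spirit of \cite{hairer2000,Lubich2006}, adapting the construction to cope with the internal stages and with the large (non-small) initial data. First I would posit, at $\tilde{t}=n\hh$, the ansatz
\[
\widehat{Z^{n}}=\sum_{\mathbf{k}}\mathrm{e}^{\mathrm{i}(\mathbf{k}\cdot\boldsymbol{\omega})\tilde{t}}\alpha^{\mathbf{k}}(\tilde{t}),\qquad \widehat{Z^{ni}}=\sum_{\mathbf{k}}\mathrm{e}^{\mathrm{i}(\mathbf{k}\cdot\boldsymbol{\omega})\tilde{t}}\beta_i^{\mathbf{k}}(\tilde{t}),
\]
with smooth, slowly varying coefficient functions $\alpha^{\mathbf{k}},\beta_i^{\mathbf{k}}$. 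Inserting this into the scheme \eqref{cs ei-ful2}, I would Taylor-expand each $\alpha^{\mathbf{k}}(\tilde{t}+c_i\hh)$ and $\alpha^{\mathbf{k}}(\tilde{t}+\hh)$ about $\tilde{t}$, and expand the matrix coefficient functions $\mathrm{e}^{c_i\hh M}$, $\bar{a}_{i\rho}(\hh M)$, $\bar{b}_{\rho}(\hh M)$ in powers of $\hh M$ via their $\varphi_{\rho}$-representation. Since $f(0)=f'(0)=0$, the nonlinearity $\Gamma$ has a Taylor series starting at quadratic order, so a product of ansatz terms carrying frequencies $\mathbf{k}_1,\mathbf{k}_2,\ldots$ feeds the frequency $\mathbf{k}=\mathbf{k}_1+\mathbf{k}_2+\cdots$; this coupling is what forces the index set $\mathcal{N}_N^*$.

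Next I would split the frequencies by comparing $\mathbf{k}\cdot\boldsymbol{\omega}$ with $\omega_{j,l}$ componentwise. For the near-resonant index $\mathbf{k}=\langle j\rangle_l$ the principal symbol $\mathrm{e}^{\mathrm{i}\hh(\mathbf{k}\cdot\boldsymbol{\omega})}-\mathrm{e}^{\mathrm{i}\hh\omega_{j,l}}$ vanishes, so matching the coefficient of $\mathrm{e}^{\mathrm{i}(\mathbf{k}\cdot\boldsymbol{\omega})\tilde{t}}$ gives no algebraic constraint at leading order; the first nontrivial relation appears at second order of the Taylor expansion and produces a second-order modulation equation for $\alpha^{\langle j\rangle_l}$. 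The symmetry of S2O2 is essential, since it cancels the odd (first-order) term and leaves a leading operator $\tfrac{\hh^2}{2}(\ddot{\alpha}+\ldots)$, from which the factor $\iota_1=\hh^3\omega_{j,l}^2/(4\sin^2(\hh\omega_{j,l}/2))$ emerges upon dividing by the $|\mathrm{e}^{\mathrm{i}\hh\omega_{j,l}}-1|^2$-type quantity; the driving nonlinear term carries the prefactor $\eps\hh$ and is cubic in $\delta_0$, which gives $\alpha^{\langle j\rangle_l}=\mathcal{O}_{abs}(\delta_0)$ and a derivative of size $\hh^2\eps^2\delta_0^3/(\iota_1\norm{\iota_3}_{L^2})$ as in \eqref{coefficient func}. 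For the remaining indices $\mathbf{k}\neq\langle j\rangle_l$ the symbol equals $2\mathrm{i}\,\mathrm{e}^{\mathrm{i}\hh(\cdots)}\sin\zeta$ with $\zeta=\tfrac12\hh(\Omega-(\mathbf{k}\cdot\boldsymbol{\omega})I)$, and is invertible by the numerical non-resonance condition $|\sin(\tfrac12\hh\omega_{j,1})|\geq c_1\sqrt{\eps}$; hence $\alpha^{\mathbf{k}}_{j,l}$ is determined \emph{algebraically}, the factor $\iota_2$ encoding this inverse, with size $\mathcal{O}_{abs}(\hh\eps\,\delta_0^{|\mathbf{k}|}/\norm{\iota_3}_{L^2})$.

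With the coefficient equations in hand I would close the estimates by induction on $|\mathbf{k}|$: the bound $\alpha^{\mathbf{k}}=\mathcal{O}(\delta_0^{|\mathbf{k}|})$ propagates because each new frequency is assembled from strictly lower modes through the at-least-quadratic nonlinearity, each product combining the $\delta_0$-powers consistently, while the non-resonance denominators stay uniformly controlled by $c_0,c_1$. Truncating the sum at $|\mathbf{k}|\leq N$ and reinserting the finite ansatz into \eqref{cs ei-ful2} leaves a per-step defect of order $\hh^2\eps^2\delta_0^{N+1}$; a discrete variation-of-constants representation, the stability of the propagator $\mathrm{e}^{\hh M}$, and a discrete Gronwall inequality over the $n\leq T/(\eps\hh)$ steps then accumulate this into the remainder bound $R_{Z,N}=\mathcal{O}_{L^2}(t\hh^2\eps^2\delta_0^{N+1})$ of \eqref{remainder}.

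I expect the main obstacle to be the internal stages: one must first solve the stage equations for $\beta_i^{\mathbf{k}}$ in terms of $\alpha^{\mathbf{k}}$, again distinguishing resonant from non-resonant frequencies, and then substitute into the update equation to obtain closed modulation equations for the $\alpha^{\mathbf{k}}$ alone. This substitution is precisely the step that, as noted in \cite{Cohen0}, has not been carried out for multi-stage methods, and keeping every denominator under the single non-resonance hypothesis while tracking the exact $\delta_0$-powers for large initial data is the delicate part of the argument.
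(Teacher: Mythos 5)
Your overall scaffolding (ansatz, frequency splitting, defect plus Gronwall for the remainder) is the right family of ideas, but two of your central steps do not survive contact with this particular scheme, and one of them would produce the wrong modulation equations.

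First, your treatment of the resonant index $\mathbf{k}=\langle j\rangle_l$ is incorrect for a one-step exponential integrator. You claim that symmetry cancels the first-order term of the symbol and that the leading operator is $\tfrac{\hh^2}{2}\ddot{\alpha}+\ldots$, i.e.\ a second-order modulation equation. That cancellation is a feature of symmetric \emph{two-step} recursions (the St\"ormer--Verlet/trigonometric-integrator setting of \cite{hairer2000}); here the scheme is a one-step map $\widehat{Z^{n+1}}=e^{\hh M}\widehat{Z^{n}}+\cdots$, so at the resonant frequency the numerator of the relevant symbol is $e^{\mathrm{i}\hh\omega_{j,l}}\bigl(e^{\hh\mathcal{D}}-1\bigr)=e^{\mathrm{i}\hh\omega_{j,l}}\bigl(\hh\mathcal{D}+\tfrac{1}{2}(\hh\mathcal{D})^2+\ldots\bigr)$, whose first-order term does \emph{not} vanish. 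The paper's expansion \eqref{LhDs} makes this explicit: the leading term of $\bigl(\mathcal{L}_1(\hh\mathcal{D}+\mathrm{i}\hh\langle j\rangle_l\cdot\boldsymbol{\omega})\bigr)_{j,l}$ is $\iota_1\mathcal{D}$ with $\iota_1=\hh^3\omega_{j,l}^2/(4\sin^2(\hh\omega_{j,l}/2))$, so the resonant coefficient functions obey \emph{first-order} ODEs $\dot{\alpha}_{j,l}^{\langle j\rangle_l}=\frac{\hh\varepsilon}{\iota_1}\Gamma^{\mathbf{1}}_{j0}+\ldots$ as in \eqref{ansatz}. This structure is exactly what delivers the stated bound $\dot{\alpha}_{j,l}^{\langle j\rangle_l}=\mathcal{O}_{abs}(\hh^2\eps^2\delta_0^3/(\iota_1\norm{\iota_3}_{L^2}))$; with your second-order leading operator the $1/\iota_1$ scaling in \eqref{coefficient func} would not come out, so your argument cannot reach the lemma as stated.

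Second, you leave the elimination of the internal stages — which you correctly identify as the crux, and which \cite{Cohen0} flags as the open difficulty for multi-stage methods — as an unresolved "delicate part." The paper does not solve stage equations for separate modulation functions $\beta_i^{\mathbf{k}}$ at all. Instead it uses the local-error analysis behind Proposition \ref{stiff thm} to identify the stages with time-shifted values of the \emph{same} modulation function, $\widehat{Z^{ni}}=\Phi(\tilde{t}+c_i\hh)+C\eps^{3}\hh^2\mathcal{D}^2\Phi(\tilde{t}+c_i\hh)$, and then folds the whole two-stage scheme into a single functional equation $\mathcal{L}(\hh\mathcal{D})\Phi(\tilde{t})=\hh\varepsilon\Gamma(\Phi(\tilde{t}))$ with $\mathcal{L}_1(\hh\mathcal{D})=\bigl(e^{\hh\mathcal{D}}-e^{\hh M}\bigr)\bigl(\bar{b}_1(\hh M)e^{c_1\hh\mathcal{D}}+\bar{b}_2(\hh M)e^{c_2\hh\mathcal{D}}\bigr)^{-1}$. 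This bypass is the new idea of the proof; without it (or a worked-out substitute), your plan to solve for the $\beta_i^{\mathbf{k}}$ and substitute is a genuine gap, not merely a technicality. Your remainder estimate (truncated defect, stability of $e^{\hh M}$, Lipschitz bound, Gronwall) and the coarse-to-fine bootstrapping of the $\delta_0$-power bounds do match the paper's Part I, but they sit downstream of the two steps above and so cannot be completed as proposed.
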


\begin{proof} \textbf{Proof of \eqref{MFE-ERKN-0}.}
 We first derive the following modulated Fourier expansion of the numerical integrators, i.e.,  the numerical scheme S2O3 has the formal expansion:
\begin{equation}
\begin{array}{ll}
\widehat{Z^{n}}=\Phi( t ):= \sum\limits_{\mathbf{k}\in\mathcal{N}^*_{\infty}}
\mathrm{e}^{\mathrm{i}(\mathbf{k} \cdot\boldsymbol{\omega} )
 t }\alpha^{\mathbf{k}}( t ),
\end{array} \label{MFE-1}%
\end{equation}
where $\mathcal{N}^*_{\infty}$ denotes the set of $\mathcal{N}_N^*$ with $N=+\infty$.
With the same arguments of local errors, it can be shown that the error between
$\widehat{Z^{ni}}$ and $\Phi( t +c_i\hh )$ has the form $\mathcal{O}(  \hh^{2})\ddot{\Phi}( t +\theta \hh)$ for some $\theta\in [0,c_i]$. Hence, one can assume that for $i=1,2$
    \begin{equation*} \begin{array}[c]{ll}%
    \widehat{Z^{ni}}=\Phi( t +c_i\hh )+C  \hh^2\mathcal{D}^2\Phi( t +c_i\hh ),
\end{array}\end{equation*}
where $C$ is the error constant which is independent of $\hh,\eps$ and $\mathcal{D}$ is referred to the differential operator (see \cite{hairer2006}) .

%
Inserting  \eqref{MFE-1}   into \eqref{cs ei-ful2} and
defining the operators  \begin{equation*}\begin{array}{ll}\mathcal{L}_1(\hh \mathcal{D}):=& \big(e^{\hh \mathcal{D}} -e^{\hh M } \big)
\big(\bar{b}_{1}(\hh M)e^{c_1\hh \mathcal{D}}+\bar{b}_{2}(\hh M)e^{c_2\hh \mathcal{D}}\big)^{-1},\\
\mathcal{L}_2(\hh \mathcal{D}):= &\big(e^{\hh \mathcal{D}} -e^{\hh M } \big)
\big(\bar{b}_{1}(\hh M)(e^{c_1\hh \mathcal{D}}+C  \hh^2\mathcal{D}^2)\\&+\bar{b}_{2}(\hh M)(e^{c_2\hh \mathcal{D}}+C  \hh^2\mathcal{D}^2)\big)^{-1}-\mathcal{L}_1(\hh \mathcal{D}),\\
\mathcal{L}(\hh \mathcal{D}):=&\mathcal{L}_1(\hh \mathcal{D})+\mathcal{L}_2(\hh \mathcal{D}),
\end{array}\end{equation*}
one has  $$
 \mathcal{L}(\hh \mathcal{D}) \Phi( t )=\hh    \Gamma ( t,\Phi( t )).$$
Expanding  the
nonlinearity into its Taylor series yields
\begin{equation*}
\begin{aligned}&\mathcal{L}(\hh \mathcal{D})\Phi( t )
=\hh  \sum\limits_{\mathbf{k}\in\mathcal{N}^*_{\infty}}\mathrm{e}^{\mathrm{i}(\mathbf{k}
\cdot \boldsymbol{\omega})  t } \sum\limits_{m\geq
2}\frac{\Gamma^{(m)}(t,0)}{m!}
\sum\limits_{\mathbf{k}^1+\ldots+\mathbf{k}^m=\mathbf{k}} \Big[\alpha^{\mathbf{k}^1}\cdot\ldots\cdot
\alpha^{\mathbf{k}^m}\Big]( t ).
\end{aligned} %
\end{equation*}
We remark that the assumptions $f(0)=0$ and  $f'(0)=0$ are used here.
Inserting  the ansatz \eqref{MFE-1} into these expressions and comparing the coefficients
of $\mathrm{e}^{\mathrm{i}(\mathbf{k} \cdot\boldsymbol{\omega} )  t }$, we obtain
\begin{equation*}
\begin{aligned}
&\mathcal{L}(\hh \mathcal{D}+\mathrm{i}(\mathbf{k} \cdot\boldsymbol{\omega} )
\hh)\alpha^{\mathbf{k}}( t )=\hh  \sum\limits_{m\geq
2}\frac{\Gamma^{(m)}(t,0)}{m!}
\sum\limits_{\mathbf{k}^1+\ldots+\mathbf{k}^m=\mathbf{k}} \Big[\alpha^{\mathbf{k}^1}\cdot\ldots\cdot
\alpha^{\mathbf{k}^m}\Big]( t ).
\end{aligned} %
\end{equation*}
This is the  modulation system  for the coefficients
$\alpha^{\mathbf{k}}( t )$  of the modulated Fourier expansion.


  In the light of the coefficients of S2O3,  the   Taylor expansions of $  \mathcal{L}_1(\hh \mathcal{D})$ are given by
\begin{equation}\label{LhD}
\begin{aligned}\mathcal{L}_1(\hh \mathcal{D})= &-\hh \Omega \mathrm{i}  +\hh \mathcal{D}-
 \frac{1}{2}\hh \Omega^{-1}\big(-2I+\hh \Omega \cot(\hh \Omega/2 )\big)\mathrm{i} \mathcal{D}^2+\ldots,\\
\mathcal{L}_1(\hh \mathcal{D}+\mathrm{i} \hh(\mathbf{k}\cdot \boldsymbol{\omega}))=&- \hh^2 \Omega^2  \Big(\hh  \Omega-
\cot\zeta+
\cos\zeta\csc\zeta\Big)^{-1}
\mathrm{i}+\ldots,
\end{aligned}
\end{equation}
with $\zeta:= \frac{1}{2}\hh (\Omega -(\mathbf{k}\cdot \boldsymbol{\omega})I) $.
Some   particular components we need are expressed as
\begin{equation}\label{LhDs}
\begin{aligned}
\big(\mathcal{L}_1(\hh \mathcal{D}+i \hh\langle j\rangle_l\cdot \boldsymbol{\omega})\big)_{j,l}=&
\frac{\hh^2 \omega_{j,l}^2}{4\sin^2(\hh \omega_{j,l}/2)}\hh \mathcal{D} \\&-\mathrm{i}\frac{\hh^2\omega_{j,l}^2}{16\sin^4(\hh \omega_{j,l}/2)} (\hh \omega_{j,l}-\sin(\hh \omega_{j,l})) (\hh \mathcal{D})^2+\ldots.\\
\end{aligned}
\end{equation}
Similarly, we get  $\mathcal{L}_2(\hh \mathcal{D})= -C\mathcal{D}^2(\mathcal{D}-\mathrm{i} \Omega) \hh^3+\cdots$ and this demonstrates that the main part of $\mathcal{L}(\hh \mathcal{D})$ comes from $\mathcal{L}_1(\hh \mathcal{D})$.
With these Taylor expansions and in  the spirit
of Euler's derivation of the Euler-Maclaurin summation formula (see  Chapter II. 10 of \cite{Wanner}),
    the following ansatz of the modulated
Fourier functions $\alpha^{\mathbf{k}}( t )$ is derived (see   \cite{WZ21} for the details of the derivation):
\begin{equation}\label{ansatz}%
\begin{array}{ll}
& \dot{\alpha}_{j,l}^{\langle j\rangle_l}( t )=\frac{\hh  }{ \iota_1}
\Gamma^{\mathbf{1}}_{j0}(\cdot)+\ldots,\ j=-\frac{N_{\tau}}{2},-\frac{N_{\tau}}{2}+1,\ldots,\frac{N_{\tau}}{2},\\
& \alpha^{\mathbf{k}}( t )=\frac{\hh  }{ \iota_2} \big(\Gamma^{\mathbf{k}}_{j0}(\cdot)+\ldots\big),\
\mathbf{k}\neq\langle j \rangle_l,
\end{array} %
\end{equation}
where  the dots  mean the power series in $\hh$ and $\Gamma^{\mathbf{k}}$ and so on stand for formal
series.   We
 truncate the
series after the $\mathcal{O}(\hh ^{N+N_0})$ terms for arbitrary positive integer $N_0$ since they often diverge.
The initial values for the differential equations appeared in the ansatz are
 determined by considering  $\Phi(0)=\widehat{Z^0}$. We thus get $\widehat{Z_{j,l}^0}=\alpha^{\langle j\rangle_l}_{j,l}(0)+\mathcal{O}(\hh    \delta_0^2),$ which yields  $\alpha^{\langle j\rangle_l}_{j,l}(0)=\mathcal{O}(\delta_0)$.

 Under the above analysis, the numerical result  obtained from
S2O3  can be expressed by   \eqref{MFE-ERKN-0}.
From the construction of the coefficient functions, it follows that it is reasonable to assume $\alpha^{\mathbf{k}}_{0,l}=0$ if $\mathbf{k}\neq \langle 0\rangle_l$ and   $\alpha^{\mathbf{k}}_{:,m}=0$ if $\abs{\mathbf{k}_{:,l}}>0\ \textmd{and}\ l\neq m$. Considering   the fact $\widehat{Z^{n}} \in \mathbb{R}^{D}$ yields that $\alpha^{-\mathbf{k}}_{-l,j}=\overline{\alpha^{\mathbf{k}}_{l,j}}$.

 \textbf{Proof of \eqref{coefficient func}.}
 In what follows, we derive the bounds of the coefficient functions
$\alpha^{\mathbf{k}}( t )$.
 By  the first formula of \eqref{ansatz}, we obtain a  coarse estimate as $$\dot{\alpha}_{j,l}^{\langle j\rangle_l}( t )=\mathcal{O}\Big(\frac{\hh  }{ \iota_1}\Big)\mathcal{O} (\Gamma^{\mathbf{1}})=\mathcal{O}\Big(\frac{\hh }{ \iota_1} \Big)$$ and by further considering the bound of $\alpha^{\langle j\rangle_l}_{j,l}(0)$,  it is arrived that $$\alpha_{j,l}^{\langle j\rangle_l}( t )=\mathcal{O}(\delta_0).$$
 Based on this estimate,  it is deduced that $\Gamma^{\mathbf{k}}=\mathcal{O}(\delta_0^{|\mathbf{k}|}).$ Therefore,
  $\alpha_{j,l}^{\mathbf{k}}( t )$  is bounded as $$ \alpha_{j,l}^{\mathbf{k}}( t )=\mathcal{O}\Big(\frac{\hh\varepsilon  }{ \norm{\iota_2}}   \delta_0^{|\mathbf{k}|} \Big)$$ by the second formula of \eqref{ansatz}. According to these estimates, a   finer bound of $\Gamma^{\mathbf{1}}$ is determined by
$$
\sum\limits_{\mathbf{k}^1+\cdots+\mathbf{k}^m=\langle j\rangle_l} \Big[\eta^{\mathbf{k}^1}_{j,l}\cdot\ldots\cdot
\eta^{\mathbf{k}^m}_{j,l}\Big],$$ which yields     $\Gamma^{\mathbf{1}}=\mathcal{O}(\frac{\hh }{  \iota_2}  \delta_0^3).$
Thence, we get $$\dot{\alpha}_{j,l}^{\langle j\rangle_l}( t )=\mathcal{O}(\frac{\hh^2}{  \iota_1\norm{\iota_2}}  \delta_0^3).$$  Therefore, the bounds presented in \eqref{coefficient func} are derived.

 \textbf{Proof of \eqref{remainder}.}
 Finally, we show the  bound of  remainder    \eqref{remainder}.
 First insert $\Phi( t )$  into the
numerical scheme \eqref{cs ei-ful2} and then the corresponding discrepancies are  (here we omit $(\hh M)$  for conciseness)
\begin{equation*}
\begin{array}[c]{ll}d_1( t )=&\Phi( t +c_1\hh )-e^{c_1\hh M}\Phi( t )\\&-\hh   \big(\bar{a}_{11}    \Gamma \big( t +c_1\hh ,\Phi( t +c_1\hh )\big)+\bar{a}_{12}    \Gamma \big( t +c_2\hh ,\Phi( t +c_2\hh )\big)\big),\\
d_2( t )=&\Phi( t +c_2\hh )-e^{c_2\hh M}\Phi( t )\\&-\hh   \big(\bar{a}_{21}    \Gamma \big( t +c_1\hh ,\Phi( t +c_1\hh )\big)+\bar{a}_{22} \Gamma \big( t +c_2\hh ,\Phi( t +c_2\hh )\big)\big),\\
d_3( t )=&\Phi( t +\hh)-e^{\hh M}\Phi( t )\\&-\hh   \big(\bar{b}_{1}  \Gamma \big( t +c_1\hh ,\Phi( t +c_1\hh )\big)+\bar{b}_{2} \Gamma \big(t +c_2\hh ,\Phi( t +c_2\hh )\big)\big).
\end{array}
\end{equation*}
There are two aspects in the bounds of these discrepancies:  $\mathcal{O}\Big(\frac{\hh   }{ \iota_2}   \delta_0^{N+1} \Big)=\mathcal{O}(  \eps   \delta_0^{N+1})$ in the truncation of  modulated Fourier expansions and $\mathcal{O}(\hh ^{N+N_0})$ in  the truncation of  the ansatz \eqref{ansatz}.
 Therefore,  discrepancies are bounded by
$$d_j( t )=\mathcal{O}(\hh ^{N+N_0})+\mathcal{O}(  \eps   \delta_0^{N+1})=\mathcal{O}(  \eps \delta_0^{N+1})$$ for $j=1,2,3$
on the basis of the arbitrarily large  $N_0$.
Then define the  errors  $$e^{Z }_n=\widehat{Z^{n}}-\Phi( t _n),\ E^{Z }_{ni}=\widehat{Z^{ni}}-\Phi( t _n+c_i\hh ).$$
They
satisfy the error recursion
\begin{equation*}
\begin{array}[c]{ll} E^{Z }_{ni}=&e^{c_i\hh M}e^{Z }_{n}+\hh\varepsilon
\textstyle\sum\limits_{\rho=1}^{2}\bar{a}_{i\rho}(\hh M)
 \big(\Gamma ( t _n+c_{\rho}\hh,\widehat{Z^{n\rho}}) \\&-\Gamma ( t _n+c_{\rho}\hh,\Phi( t _n+c_{\rho}\hh))\big)+d_i( t _n),\ i=1,2,\\
e^{Z }_{n+1}=&e^{\hh M}e^{Z }_{n}+\hh\varepsilon
\textstyle\sum\limits_{\rho=1}^{2}\bar{b}_{\rho}(\hh M)     \big(\Gamma ( t _n+c_{\rho}\hh,\widehat{Z^{n\rho}}) \\&-\Gamma ( t _n+c_{\rho}\hh,\Phi( t _n+c_{\rho}\hh ))\big)+d_3( t _n).\\
\end{array}
\end{equation*}
{Taking  the Lipschitz condition into account}, we
obtain $$\norm{ \Gamma ( t _n+c_{\rho}\hh,\widehat{Z^{n\rho}}) -\Gamma ( t _n+c_{\rho}\hh,\Phi( t _n+c_{\rho}\hh))}
\leq L \norm{E^{Z }_{n\rho}}.$$
The application of the Gronwall inequality now shows the boundedness of the defect \eqref{remainder}.
\end{proof}

\textbf{II. Almost-invariant.}
We have derived the modulated Fourier expansion  \eqref{MFE-ERKN-0}  of  S2O3
integrator and its long time energy   conservation  will be studied on the basis of an almost-invariant  of the
coefficient functions of   \eqref{MFE-ERKN-0}. Using the same arguments of \cite{WZ21}, the following almost-invariant can be derived.
\begin{lemma}\label{ai lem}
Define  the almost-invariant
$\mathcal{H}( t ):=\frac{1}{\eps^2}\mathcal{I}( t )+\mathcal{H}_1( t ),$ which satisfies  $$\eps^2 \mathcal{H}( t )=\eps^2
\mathcal{H}(0)+\mathcal{O}( t \eps^2 \hh^{2}\delta_0^{N+1}).$$
Here  $\mathcal{I}$ and $\mathcal{H}_1$ are expressed as
\begin{equation*}\begin{aligned}
&\mathcal{I}( t )
  = \sum\limits_{l=1}^{\hat{d}} \sum\limits_{j=-N_{\tau}/2}^{N_{\tau}/2} \Big(\Omega^2_{j,l}
  \abs{\alpha^{\langle j \rangle_l}_{j,l}}^2( t )+\abs{\alpha^{\langle j \rangle_{l+\hat{d}}}_{j,l+\hat{d}}}^2( t )\Big)+\mathcal{O}(\eps^{3}\delta_0^4),\\
&  \mathcal{H}_1( t )=\mathcal{V} ( \vec{\alpha}( t ) )+{\mathcal{O}( \eps^{2} \delta_0^4)},
\end{aligned}
\end{equation*}
with $ \vec{\alpha}( t ) =\big(\alpha^{\mathbf{k} }( t )\big)_{\mathbf{k}\in \mathcal{N}^*}$ and the potential $$\mathcal{V}(\vec{\alpha}( t ) ):=\sum\limits_{m= 1}^N
\frac{H_1^{(m+1)}(0)}{(m+1)!}
\sum\limits_{\mathbf{k}^1+\cdots+\mathbf{k}^{m+1}=\mathbf{0}}
 \left(\alpha^{\mathbf{k}^1}_{h}\cdot\ldots\cdot
\alpha^{\mathbf{k}^{m+1}}_{h}\right)( t ).$$
Here we use the   notation $\alpha^{\mathbf{k}}_{h}( t )=\mathrm{e}^{i (\mathbf{k} \cdot \boldsymbol{\omega})
t}\alpha^{\mathbf{k}}( t )$ and
   the potential $H_1$ given in \eqref{hhh}.
Moreover, the
relationship between this almost-invariant and  the result  $H$ of
the numerical method is derived as
\begin{equation*}
\begin{aligned}
\eps^2 \mathcal{H}( t _n)=\eps^2 H(u^{n},v^{n})+\mathcal{O}(\eps^{3} \delta_0^4)+\mathcal{O}(  \delta_{\mathcal{F}}).
\end{aligned}
\end{equation*}
\end{lemma}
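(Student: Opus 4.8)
The plan is to follow the modulated Fourier expansion machinery in the spirit of \cite{hairer2000,Cohen0,WZ21}, deriving the almost-invariant from the modulation system rather than from the scheme directly. First I would start from the modulation system for the coefficient functions $\alpha^{\mathbf{k}}(\tilde t)$ established in Part I, namely $\mathcal{L}(\hh\mathcal{D}+\mathrm{i}(\mathbf{k}\cdot\boldsymbol{\omega})\hh)\alpha^{\mathbf{k}}=-\hh\varepsilon\,\mathcal{F}\mathbf{S}\sum_{m\geq 2}\frac{\Gamma^{(m)}(0)}{m!}\sum_{\mathbf{k}^1+\cdots+\mathbf{k}^m=\mathbf{k}}[\alpha^{\mathbf{k}^1}\cdots\alpha^{\mathbf{k}^m}]$, and observe that because $f=\nabla H_1$, the right-hand side is, up to the structural factors carried by $\mathcal{F}\mathbf{S}$ and $\mathcal{F}\mathbf{C}$, a gradient of the potential $\mathcal{V}(\vec{\alpha})$ with respect to the conjugate variable $\alpha^{-\mathbf{k}}$. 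This gradient structure is what ultimately produces the nonlinear part $\mathcal{H}_1$ of the invariant.

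The central step is the \emph{magic formula}: multiply the modulation equation for each $\mathbf{k}$ by $\overline{\dot{\alpha}^{\mathbf{k}}}$, equivalently $\dot{\alpha}^{-\mathbf{k}}$ using the reality relation $\alpha^{-\mathbf{k}}_{-l,j}=\overline{\alpha^{\mathbf{k}}_{l,j}}$ noted in Part I, and sum over all $\mathbf{k}\in\mathcal{N}^*$. On the right-hand side the gradient structure makes the sum collapse to $-\hh\varepsilon\frac{d}{d\tilde t}\mathcal{V}(\vec{\alpha})$, giving $\mathcal{H}_1$. On the left-hand side I would Taylor-expand $\mathcal{L}_1$ via \eqref{LhD}--\eqref{LhDs}: the leading operator on the dominant mode $\mathbf{k}=\langle j\rangle_l$ is the \emph{real} multiple $\iota_1\mathcal{D}$ with no constant term, so the corresponding terms assemble into $\frac{d}{d\tilde t}$ of the quadratic form $\frac{1}{\eps^2}\mathcal{I}$, while the off-diagonal and higher-order pieces (those carrying the $\iota_2$-type constant term, hence the small non-dominant $\alpha^{\mathbf{k}}$) are absorbed into the remainder. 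The symmetry of S2O2 from Proposition \ref{symmetric thm} is precisely what guarantees that the non-derivative contributions cancel so that the left side is a genuine total derivative; this is why the scheme admits a formal invariant at all.

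Having obtained $\eps^2\frac{d}{d\tilde t}\mathcal{H}=\mathcal{O}(\cdots)$, I would integrate from $0$ to $\tilde t=n\hh$ and estimate the accumulated remainder using the coefficient bounds \eqref{coefficient func}. Here the bookkeeping of powers of $\delta_0$ is essential: each factor $\alpha^{\mathbf{k}}$ carries $\delta_0^{|\mathbf{k}|}$, so truncating at $|\mathbf{k}|\le N$ leaves terms of size $\delta_0^{N+1}$, yielding the stated bound $\mathcal{O}_{abs}(\tilde t\,\hh\eps^2\delta_0^{N+1})$. Finally I would establish the relation between $\mathcal{H}$ and the physical energy by substituting \eqref{MFE-ERKN-0} into the reconstruction formulas for $u^n,v^n$ in Definition \ref{defFDS} and into the Hamiltonian \eqref{hhh}; the dominant modes $\alpha^{\langle j\rangle_l}$ reproduce the quadratic and leading potential contributions, while the remaining modes together with the spectral projection account for the errors $\mathcal{O}_{abs}(\eps^4\delta_0^4)+\mathcal{O}_{abs}(\delta_{\mathcal{F}})$.

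The hardest part will be the identification of the invariant for this \emph{multi-stage} method: as emphasized in \cite{Cohen0}, the modulation operator $\mathcal{L}(\hh\mathcal{D})$ of a two- or three-stage integrator is far more intricate than in the one-stage case, and one must verify that, after the multiply-and-sum step, every term other than the $\iota_1\mathcal{D}$ leading term and the potential derivative organizes either into a total time derivative or into a genuinely higher-order remainder. Combined with the large-initial-data regime, where the weights $\delta_0^{|\mathbf{k}|}$ rather than powers of $\eps$ control the series, the simultaneous control of this algebraic structure and of the quantitative bounds is the main obstacle.
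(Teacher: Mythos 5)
Your plan follows essentially the same route as the paper: the paper offers no self-contained proof of this lemma, deriving it instead ``using the same arguments of \cite{WZ21}'', i.e.\ the standard modulated-Fourier almost-invariant machinery — the gradient structure inherited from $f=\nabla H_1$, the multiply-by-$\dot{\alpha}^{-\mathbf{k}}$-and-sum identity, the parity/symmetry structure of the operator expansion \eqref{LhD}--\eqref{LhDs} that turns the left-hand side into a total time derivative, the $\delta_0$-bookkeeping for the truncation at $|\mathbf{k}|\le N$, and the reconstruction of the physical energy from the dominant modes plus spectral projection error. Your sketch is a faithful (indeed more explicit) reconstruction of exactly that argument, including the correct identification of symmetry as the property that makes the non-derivative terms cancel, so it matches the paper's approach.
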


With the above results, it is easy to get
\begin{equation*}
\begin{aligned}
 \eps^2 H(u^{n},v^{n})&=  \eps^2\mathcal{H}( t _n)+\mathcal{O}( \eps^{3} \delta_0^4)+ \mathcal{O}(  \delta_{\mathcal{F}}) \\
 &= \eps^2 \mathcal{H}( t _{0})+n\hh\mathcal{O}(\hh^2 \eps^{2}\delta_0^{N+1})+\mathcal{O}(\eps^{3} \delta_0^4)+  \mathcal{O}(  \delta_{\mathcal{F}})\\
  &=\ldots= \eps^2 H(u^{0},v^{0})+\mathcal{O}( \eps^{3} \delta_0^4)+ \mathcal{O}(  \delta_{\mathcal{F}})
\end{aligned}
\end{equation*}
as long as $n\hh^3\eps^{2}\delta_0^{N+1} \leq \eps^{3}\delta_0^4$.
This completes the proof of Theorem \ref{Long-time thm} for S2O3.

\textbf{III. Proof for S3O4.}
For the integrator S3O4,     it follows from its coefficients that
$$\widehat{Z^{nj}}=\Phi( t +c_j\hh )+C\eps^{5}\hh^4 \mathcal{D}^4\Phi( t +c_j\hh )
$$ for $j=1,2,3$
with the error constant $C$.
We define the
  operator     \begin{equation*} \begin{array}[c]{ll}\mathcal{L}_{S3}(\hh \mathcal{D}):= &\big(e^{\hh \mathcal{D}} -e^{\hh M } \big)
\big(\bar{b}_{1}(\hh M)(e^{c_1\hh \mathcal{D}}+C \hh^4 \mathcal{D}^4)\\&+\bar{b}_{2}(\hh M)(e^{c_1\hh \mathcal{D}}+C \hh^4 \mathcal{D}^4)+\bar{b}_{3}(\hh M)(e^{c_1\hh \mathcal{D}}+C \hh^4 \mathcal{D}^4)\big)^{-1}
\end{array}\end{equation*} for S3O4.
Then   the rest proceeds similarly to that stated
above for S2O3. It should be noted that the bounds of the coefficients in modulated expansion have the same expression as S2O3. 
For S3O4, the  notations $\iota_1, \iota_2$ appeared in the above proof \eqref{iota} become
$$\iota_1:=\frac{\hh^4 \omega_{j,l}^3}{ \sin^3(\hh \omega_{j,l}/4)(64\cos(\hh \omega_{j,l}/4)+16\hh\omega_{j,l}\sin(\hh \omega_{j,l}/4))},\
  \iota_2:=\mathcal{O}(\hh )   \Omega,$$
based on the Taylor expansion of $\mathcal{L}_{S3}$.
Therefore,  the energy at S3O4 has the following relation with the almost-invariant $\mathcal{H}$: $$
\eps^2 \mathcal{H}( t _n)=\eps^2 H(u^{n},v^{n})+\mathcal{O}(\eps^{3}\delta_0^4)+ \mathcal{O}(  \delta_{\mathcal{F}}).
$$
This result yields the estimate of  S3O4 given in Theorem \ref{Long-time thm}.
\end{proof}


\section{Conclusion}\label{sec:con}
In this paper, we have designed and analyzed two-scale integrators
for  the  nonlinear  Klein--Gordon  equation  with a dimensionless parameter  $0<\varepsilon\ll 1$.  Using
some transformations of the original system,  two-scale formulation
approach, spectral semi-discretisation and
exponential integrators with stiff order and symmetric conditions, a class of large-stepsize highly accurate integrators was formulated as the
numerical approximation of \eqref{klein-gordon} with large
initial data. Stiff order conditions and symmetric property were
used in the construction of practical methods.  The proposed
integrators were shown to have not only high accuracy
but also good long-term energy {near conservation}. The numerical results of a numerical
experiment supported the properties of the obtained integrators.

{Last but not least, we point out that the main
contribution of this paper is that we have established a new
framework to design  uniform higher-order integrators with  long time behavior
for solving highly oscillatory differential equations
with  strong nonlinearity.
We believe that the methodology presented in this paper can be
extended to a range of  nonlinear Hamiltonian PDEs such as the Dirac equation and Schr\"{o}dinger equation. The rigorous analysis on this topic will be considered in our next work. Another issue for future exploration is the study of uniform higher-order integrators with exact structure  conservation such as symplecticity and energy.
  }

\section*{Conflict of interest}
The authors declare that they have no known competing financial interests or personal
relationships that could have appeared to influence the work reported in this paper.

\section*{Acknowledgements}
The authors sincerely thank  the   anonymous reviewer  for
the very valuable comments and helpful suggestions.
This work was supported by NSFC (12371403, 12271426).

\bibliographystyle{model-num-names}

\end{document}